\renewcommand{\epsilon}{\varepsilon}
\renewcommand{\rho}{\varrho}
\renewcommand{\phi}{\varphi}
\newcommand{\DS}{\displaystyle}
\newcommand{\N}{{\mathbb N}}
\newcommand{\Z}{{\mathbb Z}}
\newcommand{\R}{{\mathbb R}}
\newcommand*\red{\color{red}}
\newcommand{\cA}{{\cal A}}
\newcommand{\cB}{{\cal B}}
\newcommand{\cD}{{\cal D}}
\newcommand{\cF}{{\cal F}}
\newcommand{\cG}{{\cal G}}
\newcommand{\cH}{{\cal H}}
\newcommand{\cI}{{\cal I}}
\newcommand{\cL}{{\cal L}}
\newcommand{\cM}{{\cal M}}
\newcommand{\cN}{{\cal N}}
\newcommand{\cS}{{\cal S}}
\newcommand{\cX}{{\cal X}}
\newcommand{\cY}{{\cal Y}}
\newcommand{\cZ}{{\cal Z}}
\newcommand{\ee}{{\mathbf e}}
\begin{document}
\newtheorem{definition}{Definition}[section]
\newtheorem{theorem}[definition]{Theorem}
\newtheorem{proposition}[definition]{Proposition}
\newtheorem{corollary}[definition]{Corollary}
\newtheorem{lemma}[definition]{Lemma}
\newtheorem{remark}[definition]{Remark}
\newtheorem{conjecture}[definition]{Conjecture}
\newtheorem{problem}[definition]{Problem}
\newtheorem{example}[definition]{Example}
\title{Equilibrium Validation for Triblock Copolymers via
       Inverse Norm Bounds for Fourth-Order Elliptic Operators}
\author{{\em Peter Rizzi, Evelyn Sander, and Thomas Wanner} \\[2ex]
        Department of Mathematical Sciences \\
        George Mason University \\
        Fairfax, VA 22030, USA }
%
\date{July 18, 2022}
\maketitle
%
%
\begin{abstract} 
  Block copolymers play an important role in materials sciences and have
  found widespread use in many applications. From a mathematical perspective,
  they are governed by a nonlinear fourth-order partial differential equation
  which is a suitable gradient of the Ohta-Kawasaki energy. While the equilibrium
  states associated with this equation are of central importance for the 
  description of the dynamics of block copolymers, their mathematical study
  remains challenging. In the current paper, we develop computer-assisted 
  proof methods which can be used to study equilibrium solutions in block copolymers 
  consisting of more than two monomer chains, with a focus on triblock
  copolymers. This is achieved by establishing a computer-assisted proof
  technique for bounding the norm of the inverses of certain fourth-order
  elliptic operators, in combination with an application of a constructive
  version of the implicit function theorem. While these results are only
  applied to the triblock copolymer case, the obtained
  norm estimates can also be directly used in other contexts such as the
  rigorous verification of bifurcation points, or pseudo-arclength
  continuation in fourth-order parabolic problems.
  
  \bigskip\noindent
  {\bf AMS subject classifications:} 
  Primary: 37G15, 37M20, 65G20, 65P30; Secondary: 37B35, 37C70, 65G30

  \bigskip\noindent
  {\bf Keywords:} Block Copolymers, Ohta-Kawasaki equation, Operator Norm Bound, Computer-Assisted Proofs,
  Interval Arithmetic, Rigorous Validation, Bifurcations, Equilibrium Structure
\end{abstract}
\newpage
\tableofcontents
%
%
%
\section{Introduction}
\label{secintro}
Block copolymers are materials formed from a number of different polymer molecules
which are connected together in polymer chains. By combining monomers with different
physical properties, one can create materials with completely new properties. For
example, thermoplastic elastomers are a type of diblock copolymer, which combine
rubbery monomers, such as polybutadiene or polyisoprene, with glassy hard monomers,
such as polystyrene. Based on these two competing properties, one obtains a compound
material that can be molded at high temperatures, but behaves as a rubber at low
temperatures. Such block copolymers are used in a number of commercial applications,
such as in sealants, gasket materials, hotmelt adhesives, rubber bands, toy products,
shoe soles, and even in road paving and roofing applications.

From a physical perspective, the study of diblock copolymers, which consist of
exactly two different monomers, was initiated by Ohta and Kawasaki~\cite{ohta:kawasaki:86a}.
They proposed a free energy functional for the description of such systems, which
extends the standard van der Waals free energy functional~\cite{vanderwaals:93a}
by a nonlocal term. This addition models the competition between both
long-range and short-range forces in the material, and it causes {\em microphase
separation}, which in turn results in pattern formation on a mesoscopic scale.
The observed pattern morphology is extremely rich, and one can observe complicated
microstructure such as gyroids, perforated layers, and more. See for
example~\cite{batesf:99a} and the references therein.

Mathematical studies of diblock copolymers have focused on dynamical aspects of
these materials. As already developed in~\cite{ohta:kawasaki:86a}, one can 
associate gradient dynamics with the Ohta-Kawasaki energy functional, which leads
to a fourth-order nonlinear parabolic partial differential equation. Being dissipative,
this equation has a global attractor, and its structure is responsible for the
long-term dynamics of the model~\cite{nishiura:02a, teramoto:nishiura:10a}. Of
particular interest is the set of equilibrium states of the diblock copolymer model.
While the numerous local and global energy minimizers of the Ohta-Kawasaki energy
describe potential long-term limits, even the saddles of the energy play an important
role in the selection of specific stable states~\cite{choksi:peletier:09a,
cyranka:wanner:18a, johnson:etal:13a, lessard:sander:wanner:17a, wanner:16a}.
There have been a number of studies which proved the existence
of equilibrium solutions of certain types, see for example~\cite{nishiura:02a,
ren:wei:03b, ren:wei:06b, ren:wei:06a, ren:wei:07b} and the references therein.
More recently, computer-assisted proofs have been used to obtain mathematically
rigorous results concerning the equilibrium structure, see for
example~\cite{cai:watanabe:19a, sander:wanner:21a, vandenberg:williams:17a,
vandenberg:williams:19a, wanner:17a}.

With the present paper, we start a study of equilibrium solutions for
block copolymers consisting of more than two monomers. For the sake of
specificity, we focus on the case of triblock copolymers, which were 
first discussed in~\cite{nakazawa:ohta:93a}, and whose rich variety of
steady state microstructures has been illustrated in~\cite{bohbot:etal:00a,
zheng:wang:95a}. Such systems have only recently become the subject of mathematical
studies, see for example~\cite{ren:wang:17a, wang:etal:17a}. For the present
paper, we study the triblock copolymer system given by
\begin{equation} \label{intro:tbcpsys1}
  \begin{array}{rcl}
    \DS \frac{\partial u_1}{\partial t} & = & \DS
      -\Delta \left( \epsilon^2 \Delta u_1 + f_1(u_1,u_2) \right)
      - \sigma \left( u_1 - \mu_1 \right) \; , \\[2.5ex]
    \DS \frac{\partial u_2}{\partial t} & = & \DS
      -\Delta \left( \epsilon^2 \Delta u_2 + f_2(u_1,u_2) \right)
      - \sigma \left( u_2 - \mu_2 \right) \; , \\[2.5ex]
    & & \DS u_1 + u_2 + u_3 \; = \; 1 \; .
  \end{array}
\end{equation}
In this system, the functions~$u_1, u_2, u_3 \colon \Omega \to \R$
describe the three monomer components. More precisely, values between~$0$
and~$1$ of~$u_i(t,x)$ indicate that at time~$t$ and location~$x \in \Omega$
the $i$-th monomer has concentration~$u_i(t,x)$. Since the above model is
of phase-field type, the function values of the~$u_i$ may not actually lie 
between 0 and 1, but they generally stay close
to this interval, and we interpret negative values or values larger than
one as being equal to concentrations of~$0$ or~$1$ in the physical system, 
respectively. This is similar to the
phase variables considered for example in the Allen-Cahn or Cahn-Hilliard
models, up to an affine transformation. The above system of equations has
to be satisfied in a bounded domain~$\Omega \subset \R^d$ for $d = 1,2,3$,
where for the purposes of this paper we restrict ourselves to the unit
cube~$\Omega = (0,1)^d$. In addition, we consider~(\ref{intro:tbcpsys1})
subject to homogeneous Neumann boundary conditions for all~$u_i$ and~$\Delta u_i$,
the small parameter~$\epsilon > 0$ models interaction length, and~$\sigma \ge 0$
represents the polymer length scale, just as in the diblock copolymer
case~\cite{sander:wanner:21a}. If we further define $\mu_3 = 1 - \mu_1 - \mu_2$,
then the three constants~$\mu_i$ have to be nonnegative, and they represent
the total mass of the three involved monomers in the sense that
\begin{equation} \label{intro:tbcpsys2}
  \frac{1}{|\Omega|} \int_\Omega u_i(t,x) \, dx
  \; = \; \mu_i
  \qquad\mbox{ for all }\qquad
  t \ge 0
  \quad\mbox{ and }\quad
  i = 1,2,3 \; .
\end{equation}
Finally, the functions~$f_1$ and~$f_2$ are suitable nonlinearities which are
derived from the gradient of a triple-well potential and which will be 
described in more detail in our model derivation in the next section.

Our approach for establishing the existence of equilibrium solutions of
the triblock copolymer system~(\ref{intro:tbcpsys1}) subject to the mass
constraints in~(\ref{intro:tbcpsys2}) is based on the constructive 
implicit function theorem introduced in~\cite{sander:wanner:16a}, 
combined with the rigorous Sobolev estimates of~\cite{wanner:18a}
and extends the approach used in~\cite{sander:wanner:21a} for the diblock
copolymer case. More precisely, we use spectral approximations based
on cosine series to find an approximate solution, and then have to
establish the following three estimates:
\begin{itemize}
\item First of all, one has to determine the residual of the
approximate solution, which in view of our use of a spectral
approximation combined with polynomial nonlinearities, amounts to
little more than evaluating a finite sum in interval arithmetic.
\item Next, one needs local Lipschitz bounds on the Fr\'echet 
derivative of~(\ref{intro:tbcpsys1}) at the approximate solution,
which can easily be obtained using the above-mentioned Sobolev
embedding results.
\item The last and most difficult step is to obtain a rigorous
bound on the operator norm of the inverse of the Fr\'echet 
derivative of~(\ref{intro:tbcpsys1}) at the approximate
solution. While in principle this will be accomplished as 
in~\cite{sander:wanner:21a}, the specifics must be adapted
to the current situation. This step is definitely the most 
elaborate part of the proof.
\end{itemize}
Once the above three tasks have been completed, one obtains 
computer-assisted proofs for small solution branches of
equilibrium solutions of~(\ref{intro:tbcpsys1}), similar 
to~\cite[Theorem~5]{sander:wanner:16a}
and~\cite[Section~5]{sander:wanner:21a}.

The above approach has a couple of shortcomings. In particular, the direct use of the
constructive implicit function theorem only provides small branch
segments, as the theorem in its original form is not aligned to
the actual tangent direction of the branch; see the discussion 
in~\cite{sander:wanner:16a}. Additionally, in order to follow
a branch through a saddle-node bifurcation
or to directly verify of the existence of certain
bifurcation points requires applying the
constructive implicit function theorem to a suitable extended
system. See for example~\cite{kamimoto:kim:sander:wanner:22a, 
lessard:sander:wanner:17a, rizzi:sander:wanner:p22a, sander:wanner:16a}.
In all of these cases, one has to study 
different extensions of the system~(\ref{intro:tbcpsys1}). 
In its current form,  for each of these applications the 
elaborate third step above has to be redone.

One of the main contributions of the current paper is the development of 
a flexible general framework such that this reassembly for each application 
is not necessary. Namely,  we derive
the norm estimate on the inverse operator right away for a
sufficiently large class of linear systems, such that we can easily reuse 
already calculated results for a variety of different extended systems. More precisely,
the central part of this paper is devoted to obtaining such
estimates for linear operators~$L$ acting on~$m \in \N_0$
scalar parameters $\eta_1, \ldots, \eta_m$ and on~$n \in \N$
functions~$v_k \colon \Omega \to \R$ in such a way that the
first~$m$ components of~$L$ are given by the scalars
\begin{equation} \label{intro:linop1}
  \sum_{i=1}^m \alpha_{ki} \eta_i +
    \sum_{j=1}^n l_{kj}(v_j)
  \qquad\mbox{ for }\qquad
  k = 1,\ldots,m \; ,
\end{equation}
and the next~$n$ components of~$L$ are given by the functions
\begin{equation} \label{intro:linop2}
  -\beta_{k} \Delta^2 v_k 
    - \sum_{i=1}^m b_{ki} \eta_i
    - \Delta \sum_{j=1}^n c_{kj} v_j
    - \sum_{j=1}^{n} \gamma_{kj} v_{j}
  \qquad\mbox{ for }\qquad
  k = 1,\ldots,n \; .
\end{equation}
In these formulas, the variables~$\alpha_{ki}$, $\beta_{k} > 0$, and $ \gamma_{kj} $
refer to real constants, 
while~$b_{ki}$ and~$c_{kj}$
denote suitably smooth real-valued functions defined on~$\Omega$,
and $ l_{kj} $ denotes a bounded linear functional on the ambient space for $ v_j $
with Riesz representative in a suitable finite dimensional subspace.

At first glance, the generality of the linear operators defined
in~(\ref{intro:linop1}) and~(\ref{intro:linop2}) might seem
exaggerated, given that the main application of this paper is the
establishment of certain triblock copolymer microstructures. In
fact, however, the above generality allows for a number of direct
applications:
\begin{itemize}
\item An immediate application is the study of pitchfork 
bifurcation points in the diblock copolymer model which are
induced by symmetry-breaking based on a cyclic group
action~\cite{rizzi:sander:wanner:p22a}. While these results answer
an open question posed in~\cite{lessard:sander:wanner:17a}, the
latter paper was based on the radii polynomial approach, and
one would have to adapt the estimates for every group order
to obtain a computer-assisted proof. The estimates of the
present paper apply directly and without change.
\item We can extend the initial study of triblock copolymers
in this paper to a more systematic study of their bifurcation
diagrams using rigorous pseudo-arclength continuation, and
thereby shed some light on the creation of bubble
assemblies~\cite{wang:etal:17a}.
\item The norm bound estimate can be used to obtain rigorous
results on double and quadruple bubbles in multi-component
metallic alloys, as modeled by Cahn-Morral
systems~\cite{maier:stoth:wanner:00a}. So far, only numerical
results have been obtained in~\cite{desi:etal:11a}.
\item More generally, our
construction opens the door to a more detailed study of the
bifurcation structure of the celebrated Cahn-Hilliard model 
on higher-dimensional domains~\cite{maier:etal:08a,
maier:etal:07a}, including pseudo-arclength continuation,
bifurcation point verification, and continuation of bifurcation
points in a two-parameter setting.
\end{itemize}
The above list is not meant to be exhaustive, but rather to justify
the extra effort which is necessary to study the linear operator
defined in~(\ref{intro:linop1}) and~(\ref{intro:linop2}). For the sake 
of keeping the current paper from becoming too long, we will address these 
applications in future work. 

The remainder of the paper is organized as follows. 
Section~\ref{sec:tbcpequil} is devoted to our preliminary study
of triblock copolymers. In addition to deriving the model and
describing its basic stability as a function of the mass vector,
we also describe how a constructive version of the implicit function
theorem can be used to obtain computer-assisted proofs for small
branches of equilibrium solutions. The section closes with the
presentation of specific computer-assisted proofs of a variety
of observed microstructures. Section~\ref{sec:defandsetup}
introduces the functional-analytic framework for our
computer-assisted equilibrium validation. In addition to
recalling  definitions and results from~\cite{sander:wanner:21a,
wanner:18a}, we also derive the Lipschitz estimates which are
necessary for the application of the constructive implicit function
theorem. The remaining ingredient for the validation of the triblock
copolymer stationary states is the derivation of the inverse norm bound,
which is the subject of Section~\ref{sec:invnormbnd}. Finally,
Section~\ref{sec:future} contains conclusions and future applications.
\section{Validated triblock copolymer equilibria}
\label{sec:tbcpequil}
Before diving into the more technical parts of the paper, we demonstrate
how these results can be applied in the context of triblock copolymers. 
More precisely, in the present section we rigorously validate equilibria
for the triblock copolymer model, by establishing intriguing patterns
for a variety of different mass vectors. To our knowledge, this is the
first computer-assisted result in this context.

To accomplish this, Subsection~\ref{subsec:model} briefly describes the
derivation of the model from its energy functional, which is followed in
Subsection~\ref{subsec:homogequil} by a discussion of the stability of the
homogeneous steady state and the creation of nontrivial steady states via
bifurcations from the homogeneous one. We then present the framework for
our computer-assisted equilibrium validation in Subsection~\ref{sec:capdetail},
which is based on a constructive version of the implicit function theorem.
Finally, Subsection~\ref{sec:rigvermic} contains sample computer-assisted
existence proofs for equilibrium solutions, which are all based on the
functional-analytic framework described in Section~\ref{sec:defandsetup}
and the main inverse norm bound derived in Section~\ref{sec:invnormbnd}.
\subsection{Derivation of the triblock copolymer model}
\label{subsec:model}
The dynamics of triblock copolymers are dictated by their associated
free energy as introduced in~\cite{nakazawa:ohta:93a}, which is given
by
\begin{equation} \label{tbcp:energy}
  E_{\epsilon,\sigma}[u] \; = \;
  \int_\Omega \left( \frac{\epsilon^2}{2} \sum_{i=1}^3
    |\nabla u_i|^2 + F(u) + \frac{\sigma}{2} \sum_{i=1}^3
    \left| (-\Delta)^{-1/2} (u_i - \mu_i) \right|^2
    \right) \, dx \; ,
\end{equation}
where $u = (u_1,u_2,u_3) \colon \Omega \to \R^3$ denotes a material
state which satisfies $u_1 + u_2 + u_3 = 1$ throughout~$\Omega$,
and~$F$ is the triple-well potential given by
\begin{equation} \label{tbcp:potential}
  F(u) = F(u_1,u_2,u_3) = \sum_{i=1}^3 g(u_i)
  \qquad\mbox{ with }\qquad
  g(s) = \frac{27 s^2 (1-s)^2}{4} \; .
\end{equation}
This energy functional has global minima at the three points~$(1,0,0)$,
$(0,1,0)$, and~$(0,0,1)$, which correspond to the three pure
monomers. From this energy functional, one can derive several
different gradient-like evolution equations if the function~$u$
also shows $t$-dependence. While the recent study~\cite{wang:etal:17a}
considered a standard $L^2(\Omega)$-gradient, which results in a nonlocal
second-order evolution equation, we follow the standard procedure which
in the two-component case leads to the Ohta-Kawasaki model. That is, we
use the $H^{-1}(\Omega)$-gradient instead. This results in a local
fourth-order partial differential equation, which exhibits a
structure similar to the classical Cahn-Hilliard, Cahn-Morral,
or Ohta-Kawasaki models.

In order to arrive at an evolution equation for the first two
components~$u_1$ and~$u_2$ only, which respects the conservation
of mass identity~$u_1 + u_2 + u_3 = 1$ throughout~$\Omega$, we follow
the procedure outlined in~\cite{elliott:luckhaus:91a, eyre:96a,
maier:stoth:wanner:00a} for the Cahn-Morral case. For this, define
the vector-valued function~$u := (u_1,u_2,u_3) \colon \Omega \to \R^3$
and let $\mu = (\mu_1,\mu_2,\mu_3)$ be the vector of total masses of the three 
involved monomers, as defined in \eqref{intro:tbcpsys2}. 
In fact, we assume that the latter
mass vector lies in the {\em Gibbs triangle\/}~$\cG$ defined as
\begin{displaymath}
  \cG := \left \{ \mu \in \R^3 \; \colon \;
    \sum_{i=1}^3 \mu_i = 1 \;\;\mbox{ and }\;\;
    \mu_i \ge 0 \;\;\mbox{ for }\;\;
    i = 1,2,3 \right\} \; .
\end{displaymath}
By computing the $H^{-1}(\Omega)$-gradient of~(\ref{tbcp:energy})
as in~\cite{elliott:luckhaus:91a, eyre:96a, maier:stoth:wanner:00a},
one can then associate the gradient dynamics given by
\begin{equation} \label{tbcp:evoleqn}
  \begin{array}{rcrl}
    \DS \frac{\partial u}{\partial t} & = & \DS
      -\Delta \left( \epsilon^2 \Delta u + \bar{f}(u) \right) -
      \sigma (u - \mu)
      & \mbox{ in } \;\; \Omega \; , \\[3ex]
    & & \DS \frac{\partial u}{\partial \nu} \; = \;
       \frac{\partial \Delta u}{\partial \nu} \; = \; 0
      & \mbox{ on } \;\; \partial\Omega \; ,
  \end{array}
\end{equation}
where~$\nu$ denotes the unit outward normal vector to the
boundary~$\partial\Omega$. We would like to point out that
while at first glance this equation appears to be exactly the
diblock copolymer model, it is in fact a system of equations
for the vector-valued function~$u$, i.e., for all
$t\ge 0$ and arbitrary $x \in \Omega$ we have $u(t,x) \in \R^3$. 
The nonlinearity~$\bar{f} \colon \R^3
\to \R^3$ is given by
\begin{displaymath}
  \bar{f}(u) = -P \nabla F(u) \; ,
  \qquad\mbox{ where }\qquad
  Pv = v - \frac{v \cdot \ee}{|\ee|^2} \, \ee
  \quad\mbox{ and }\quad
  \ee = (1,1,1) \; .
\end{displaymath}
This form of the nonlinearity ensures that the right-hand side
of~(\ref{tbcp:evoleqn}) is pointwise orthogonal to the vector~$\ee$,
and therefore the evolution of this partial differential equation
leaves the affine space~$u_1 + u_2 + u_3 = 1$ pointwise invariant.
One can therefore consider the first two equations of the evolution
equation~(\ref{tbcp:evoleqn}) only, where we replace~$u_3$
by~$1 - u_1 - u_2$. Thus, we finally obtain the equation stated
in~(\ref{intro:tbcpsys1}), where the nonlinearities~$f_1$ and~$f_2$
are given by
\begin{displaymath}
  \begin{array}{rcccl}
    \DS f_1(u_1,u_2) & = & \DS \bar{f}_1(u_1,u_2,1-u_1-u_2) & = &
      \DS \frac{-2 g'(u_1) + g'(u_2) + g'(1-u_1-u_2)}{3} \; , \\[3ex]
    \DS f_2(u_1,u_2) & = & \DS \bar{f}_2(u_1,u_2,1-u_1-u_2) & = &
      \DS \frac{g'(u_1) - 2 g'(u_2) + g'(1-u_1-u_2)}{3} \; ,
  \end{array}
\end{displaymath}
where~$g$ was defined in~(\ref{tbcp:potential}).

In the present paper, we will study the set of equilibrium states
of the system~(\ref{intro:tbcpsys1}), i.e., we set the right-hand sides
in the partial differential equations equal to zero, which in turn gives
the fourth-order elliptic nonlinear system
\begin{equation} \label{tbcp:equilsys}
  \begin{array}{rclcrcl}
    \DS -\Delta \left( \epsilon^2 \Delta u_1 + f_1(u_1,u_2) \right)
      - \sigma \left( u_1 - \mu_1 \right) & = & 0 \; , & \quad &
      \DS \frac{1}{|\Omega|} \int_\Omega u_1 \, dx & = & \mu_1 \; , \\[3ex]
    \DS -\Delta \left( \epsilon^2 \Delta u_2 + f_2(u_1,u_2) \right)
      - \sigma \left( u_2 - \mu_2 \right) & = & 0 \; , & \quad &
      \DS \frac{1}{|\Omega|} \int_\Omega u_2 \, dx & = & \mu_2 \; ,
  \end{array}
\end{equation}
subject to homogeneous Neumann boundary conditions for~$u_i$
and~$\Delta u_i$, for $i=1,2$.
\subsection{Stability of the homogeneous steady state}
\label{subsec:homogequil}
One can easily see that the equilibrium problem~(\ref{tbcp:equilsys})
for the triblock copolymer equation has the spatially constant
solution~$(\mu_1,\mu_2)$, whenever we have $\mu = (\mu_1,\mu_2,\mu_3)
\in \cG$. Thus, one can hope to find nontrivial equilibria through
path-following from this homogeneous steady state as the
parameter~$\epsilon$ is varied. Since one can easily see that for
large values of~$\epsilon$ the homogeneous state is stable (see also
the discussion below), we need to focus on mass vectors for which this
steady state becomes unstable as~$\epsilon$ decreases.

More generally, suppose now that~$u_1, u_2 \colon \Omega \to \R$ denotes
a solution of the system~(\ref{tbcp:equilsys}), i.e., the functions~$u_1$,
$u_2$, and $u_3 = 1 - u_1 - u_2$ are an equilibrium for the triblock
copolymer equation~(\ref{intro:tbcpsys1}). In order to understand the
stability of this steady state one has to consider the linearization
given by
\begin{equation} \label{tbcp:equilin}
  \begin{array}{rcl}
    \DS \frac{\partial v_1}{\partial t} & = & \DS
      -\Delta \left( \epsilon^2 \Delta v_1 -
        \frac{2}{3} g''(u_1) v_1 + \frac{1}{3} g''(u_2) v_2 -
        \frac{1}{3} g''(u_3) (v_1 + v_2) \right)
      - \sigma v_1 \; , \\[3ex]
    \DS \frac{\partial v_2}{\partial t} & = & \DS
      -\Delta \left( \epsilon^2 \Delta v_2 +
        \frac{1}{3} g''(u_1) v_1 - \frac{2}{3} g''(u_2) v_2 -
        \frac{1}{3} g''(u_3) (v_1 + v_2) \right)
      - \sigma v_2 \; ,  \end{array}
\end{equation}
or, more precisely, the spectrum of the linear elliptic operator 
induced by its right-hand side. For the homogeneous case this 
linearization simplifies to the linear partial differential equation
\begin{equation} \label{tbcp:homoglin}
  \frac{\partial}{\partial t}
    \begin{pmatrix} v_1 \\ v_2 \end{pmatrix}
  \; = \;
  -\Delta \left( \epsilon^2 \Delta
    \begin{pmatrix} v_1 \\ v_2 \end{pmatrix} +
    M \begin{pmatrix} v_1 \\ v_2 \end{pmatrix} \right) -
    \sigma \begin{pmatrix} v_1 \\ v_2 \end{pmatrix} \; ,
\end{equation}
where the matrix~$M \in \R^{2 \times 2}$ is defined as
\begin{equation} \label{tbcp:homoglinM}
  M \; = \;
  \frac{1}{3}
    \begin{pmatrix}
      -2g''(\mu_1) & g''(\mu_2) \\[1ex]
      g''(\mu_1) & -2g''(\mu_2)
    \end{pmatrix} -
  \frac{1}{3} \, g''(1-\mu_1-\mu_2)
    \begin{pmatrix} 1 & 1 \\ 1 & 1 \end{pmatrix}
  \; .
\end{equation}
This linearization is considered subject to homogeneous Neumann
boundary conditions as before, and in addition, with the homogeneous
integral constraints $\int_\Omega v_1 \, dx = \int_\Omega v_2 \, dx = 0$.
Its stability is the subject of the next lemma.
\begin{lemma}[Stability of the homogeneous state]
\label{lem:homogstability}
Let~$\mu \in \cG$ denote an arbitrary mass vector in the Gibbs
triangle, and consider the linearization of the triblock copolymer
model~(\ref{intro:tbcpsys1}) at this homogeneous state, as given
in~(\ref{tbcp:homoglin}) and~(\ref{tbcp:homoglinM}) above. Then
the matrix~$M$ is diagonalizable, i.e., we can find two eigenvalues
$\nu_1,\nu_2 \in \R$ and eigenvectors $p_1,p_2 \in \R^2$ such that
\begin{equation} \label{lem:homogstability1}
  M p_1 = \nu_1 p_1
  \quad\mbox{ and }\quad
  M p_2 = \nu_2 p_2 \; ,
  \quad\mbox{ as well as }\quad
  \nu_1 \ge \nu_2 \; .
\end{equation}
Furthermore, if the eigenvalues of the negative Laplacian subject
to homogeneous Neumann boundary conditions and zero mass constraint
on~$\Omega$ are given by $0 < \kappa_1 \le \kappa_2 \le \ldots \to
\infty$, and the associated eigenfunctions~$\phi_k$ satisfy
\begin{equation} \label{lem:homogstability2}
  -\Delta \phi_k = \kappa_k \phi_k \quad\mbox{ on }\;\Omega
  \qquad\mbox{ and }\qquad
  \frac{\partial \phi_k}{\partial \nu} = 0
    \quad\mbox{ on }\;\partial\Omega \; ,
\end{equation}
then the following hold:
\begin{itemize}
\item[(a)] For every $j = 1,2$ and $k \in \N$ the vector~$p_j \phi_k$
of functions is an eigenfunction of the right-hand side of~(\ref{tbcp:homoglin})
with associated eigenvalue
\begin{equation} \label{lem:homogstability3}
  \lambda_{j,k} = \kappa_k \left( \nu_j - \epsilon^2 \kappa_k \right)
    - \sigma \; .
\end{equation}
Furthermore, the spectrum of the operator given by the right-hand
side of~(\ref{tbcp:homoglin}) consists precisely of these
eigenvalues~$\lambda_{j,k}$ for $j = 1,2$ and $k \in \N$.
\item[(b)] If the inequality $\nu_2 \le \nu_1 \le 0$ holds, then the
homogeneous equilibrium~$\mu$ of~(\ref{intro:tbcpsys1}) is stable for
all values $\epsilon > 0$. On the other hand, if we have $\nu_1 > 0$,
then the homogeneous state is unstable for all sufficiently small
$\epsilon > 0$.
\end{itemize}
\end{lemma}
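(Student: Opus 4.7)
My plan is to verify the three assertions in the order they are stated, starting with the diagonalizability of $M$, then the explicit spectral decomposition in part (a), and finally the stability dichotomy in part (b).

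For the diagonalizability of $M$, I would set $a = g''(\mu_1)/3$, $b = g''(\mu_2)/3$, $c = g''(\mu_3)/3$, so that the matrix becomes
\begin{displaymath}
  M \; = \; \begin{pmatrix} -2a - c & b - c \\ a - c & -2b - c \end{pmatrix} \; .
\end{displaymath}
A direct computation yields $\mathrm{tr}\, M = -2(a+b+c)$ and $\det M = 3(ab + ac + bc)$, so the discriminant of the characteristic polynomial works out to
\begin{displaymath}
  (\mathrm{tr}\, M)^2 - 4 \det M
  \; = \; 2\bigl[(a-b)^2 + (b-c)^2 + (a-c)^2\bigr]
  \; \ge \; 0 \; .
\end{displaymath}
This gives two real eigenvalues $\nu_1 \ge \nu_2$. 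When the discriminant is strictly positive we obtain two linearly independent eigenvectors in the usual way; when it vanishes, the identity $a=b=c$ forces $M = -3a\,I$, so any basis of $\R^2$ serves as eigenvectors. This is likely the most delicate algebraic step, since $M$ is not symmetric and its diagonalizability is not obvious a priori.

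For part (a), I would substitute $v = p_j \phi_k$ directly into the right-hand side of~(\ref{tbcp:homoglin}). Using $M p_j = \nu_j p_j$ and $-\Delta \phi_k = \kappa_k \phi_k$ componentwise, the inner bracket becomes $(\nu_j - \epsilon^2 \kappa_k)\,p_j \phi_k$, a further application of $-\Delta$ produces the factor $\kappa_k$, and subtraction of $\sigma\, p_j \phi_k$ yields exactly the eigenvalue~(\ref{lem:homogstability3}). For the claim that the spectrum consists of precisely these eigenvalues, I would invoke the spectral theorem for the Neumann Laplacian on $\Omega$: the family $\{\phi_k\}_{k \in \N}$ is a complete orthonormal basis of the zero-mass subspace of $L^2(\Omega)$, and since $\{p_1, p_2\}$ is a basis of $\R^2$, the products $\{p_j \phi_k\}_{j,k}$ form a complete basis of the underlying function space respecting the integral constraints. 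Any element in the domain of the operator admits a convergent expansion in this basis, so the operator decomposes into a direct sum of the scalar multiplications $\lambda_{j,k}$, and no other spectral values can occur.

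For part (b), the case $\nu_1 \le 0$ is immediate: since $\kappa_k > 0$ we have $\kappa_k \nu_j \le 0$ and $-\epsilon^2 \kappa_k^2 < 0$, so every $\lambda_{j,k} \le -\sigma - \epsilon^2 \kappa_k^2 < 0$, giving asymptotic stability. For the converse, suppose $\nu_1 > 0$ and treat $\epsilon > 0$ as a small parameter. The continuous relaxation $\kappa \mapsto \kappa(\nu_1 - \epsilon^2 \kappa) - \sigma$ attains its maximum $\nu_1^2/(4\epsilon^2) - \sigma$ at $\kappa^* = \nu_1/(2\epsilon^2)$. Since $\kappa_k \to \infty$ with bounded gaps relative to $\kappa_k$ itself (indeed $\kappa_k \sim k^{2/d}$ for the unit cube), for every sufficiently small $\epsilon$ we can choose $k = k(\epsilon)$ with $\epsilon^2 \kappa_k$ bounded away from both $0$ and $\nu_1$; then $\lambda_{1,k} \ge c\,\kappa_k - \sigma$ for a positive constant $c$, which is positive once $\epsilon$ is small enough. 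This produces a positive eigenvalue and therefore instability, completing the proof.
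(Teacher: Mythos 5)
Your proposal is correct and follows essentially the same route as the paper: the same discriminant computation $(\mathrm{tr}\,M)^2 - 4\det M = \tfrac{2}{9}\sum (g''(\mu_i)-g''(\mu_j))^2 \ge 0$ for real eigenvalues, direct substitution of $p_j\phi_k$ plus completeness of $\{\phi_k\}$ in the zero-mass subspace for part (a), and the sign of formula~(\ref{lem:homogstability3}) for part (b). You are in fact slightly more careful than the paper in two spots — handling the degenerate case $a=b=c$ (where $M$ is a multiple of the identity, so diagonalizability still holds) and exhibiting an explicit unstable mode $\kappa_k \approx \nu_1/(2\epsilon^2)$ when $\sigma>0$ — but these are refinements of the same argument, not a different one.
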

\begin{proof}
It is elementary to show that  if we write 
$\mu_3 = 1 - \mu_1 - \mu_2$, then the characteristic polynomial
of the matrix~$M$ has the discriminant
\begin{displaymath}
  \frac{2}{9} \left(
  \left( g''(\mu_1) - g''(\mu_2) \right)^2 +
  \left( g''(\mu_2) - g''(\mu_3) \right)^2 +
  \left( g''(\mu_3) - g''(\mu_1) \right)^2 \right)
  \; \ge \; 0 \; ,
\end{displaymath}
which is clearly nonnegative. Thus, the matrix~$M$ always has two real
eigenvalues and associated real eigenvectors, i.e., we can assume
that~(\ref{lem:homogstability1}) is satisfied. But then one easily
obtains that the pair of functions~$\psi = p_j \phi_k$ satisfies
the identity $-\Delta(\epsilon^2 \Delta \psi + M\psi) = \lambda_{j,k}
\psi$, by applying~(\ref{lem:homogstability2}) component-wise, in
combination with~(\ref{lem:homogstability1}). In addition, note that
according to our construction the eigenfunctions~$\{ \phi_k \}_{k \in \N}$
form a complete orthogonal set in the Hilbert space~$X = \{ w \in
L^2(\Omega) \colon \int_\Omega w \, dx = 0 \}$. Thus, the function
pairs~$p_j \phi_k$ for $j = 1,2$ and $k \in \N$ form a complete
orthogonal set in~$X \times X$, which immediately establishes~{\em (a)\/}.
Finally, the statements in~{\em (b)\/} follow easily from the formula
in~(\ref{lem:homogstability3}) and the fact that $\kappa_k > 0$ for
all $k \in \N$. This completes the proof of the lemma.
\end{proof}
\begin{figure}
  \begin{center}
    \hspace*{-2.5cm}
    \includegraphics[width=0.5\textwidth]{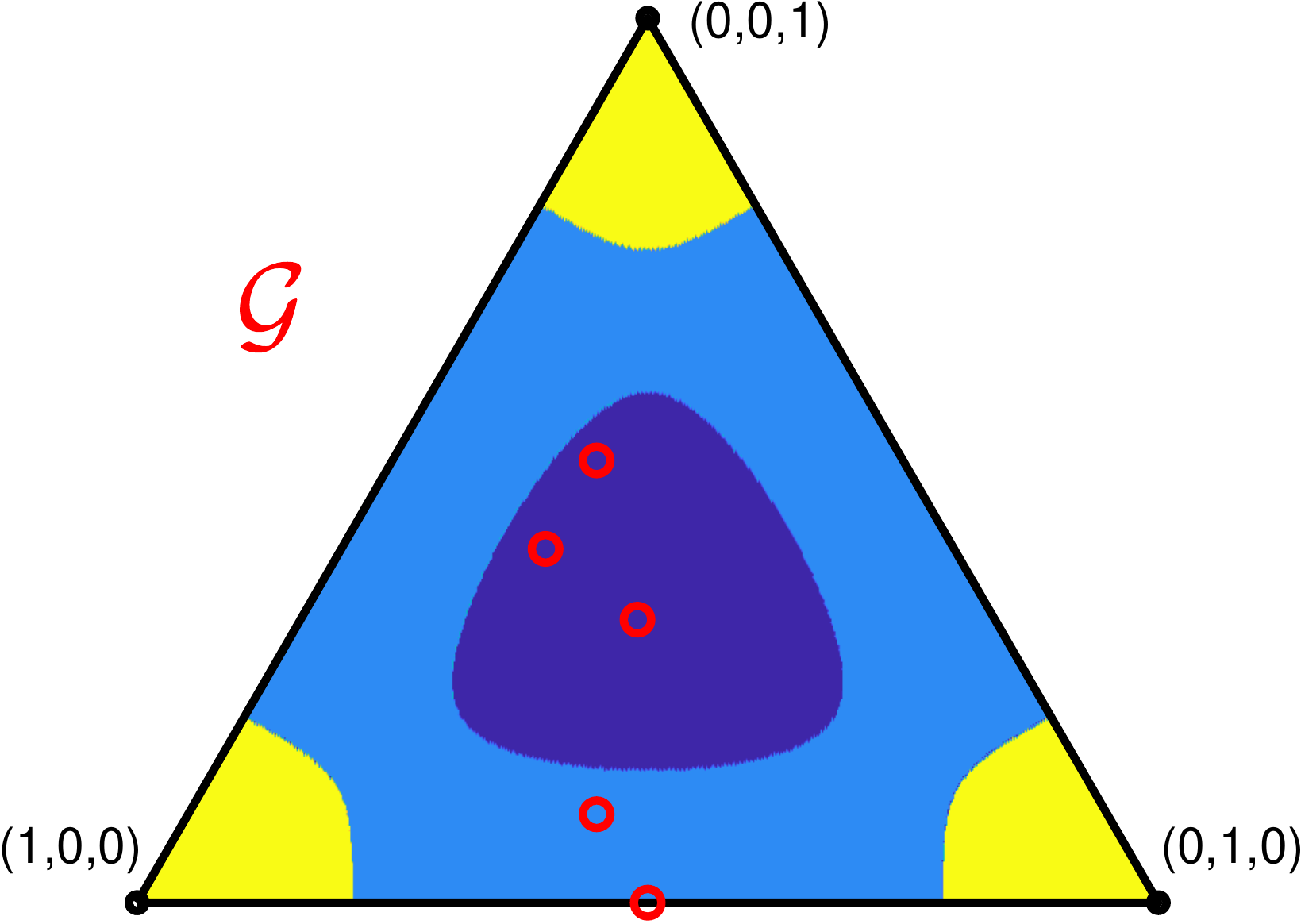}
    \hspace*{2.5cm}
    \raisebox{2.5cm}{%
      \makebox(4.5,5.0){
      \begin{tabular}{|c|c|c|} \hline
        $\mu_1$ & $\mu_2$ & $\mu_3$ \\ \hline \hline
        0.30 & 0.20 & 0.50 \\
        0.40 & 0.20 & 0.40 \\
        0.35 & 0.33 & 0.32 \\
        0.50 & 0.40 & 0.10 \\
        0.50 & 0.50 & 0.00 \\ \hline
      \end{tabular}}}
  \end{center}
  \caption{Stability regions for the homogeneous steady state of the
           triblock copolymer model~(\ref{intro:tbcpsys1}) in the Gibbs
           triangle~$\cG$. For total mass vectors in the yellow regions
           the homogeneous state is stable, if the vector lies in either
           the light or dark blue areas, then the state is unstable for
           sufficiently small $\epsilon > 0$. The dark blue region
           corresponds to $\nu_1 \ge \nu_2 > 0$, while the light blue
           region is for $\nu_1 > 0 \ge \nu_2$ in~(\ref{lem:homogstability1}).
           The five red dots indicate mass vectors at which we validated
           nontrivial solutions, and the associated mass values are listed
           in the table. Figures~\ref{fig:bifdiag} and~\ref{fig:tbcp2dimages:a}--\ref{fig:tbcp2dimages:c}
            have~$\mu$ values in the dark blue region, whereas
           Figures~\ref{fig:bifdiag1d} and~\ref{fig:tbcp1dimages} have values of~$\mu$
           in the light blue region.}
  \label{fig:stabregion}
\end{figure}%

The stability of the homogeneous steady state~$\mu = (\mu_1,\mu_2,\mu_3)
\in \cG$ is illustrated in Figure~\ref{fig:stabregion}. In this figure,
yellow regions correspond to $\nu_2 \le \nu_1 \le 0$, i.e., in those
regions the homogeneous state~$\mu$ is stable for all $\epsilon > 0$.
On the other hand, the light blue region corresponds to the inequality
$\nu_1 > 0 \ge \nu_2$, while the dark blue region is for $\nu_1 \ge
\nu_2 > 0$. In both of these regions, the homogeneous state~$\mu$ is
unstable for all sufficiently small $\epsilon > 0$, and one can hope
to observe sudden phase separation in solutions of the triblock copolymer
model originating nearby.
\begin{figure}
  \begin{center}
  \includegraphics[width=0.45\textwidth]{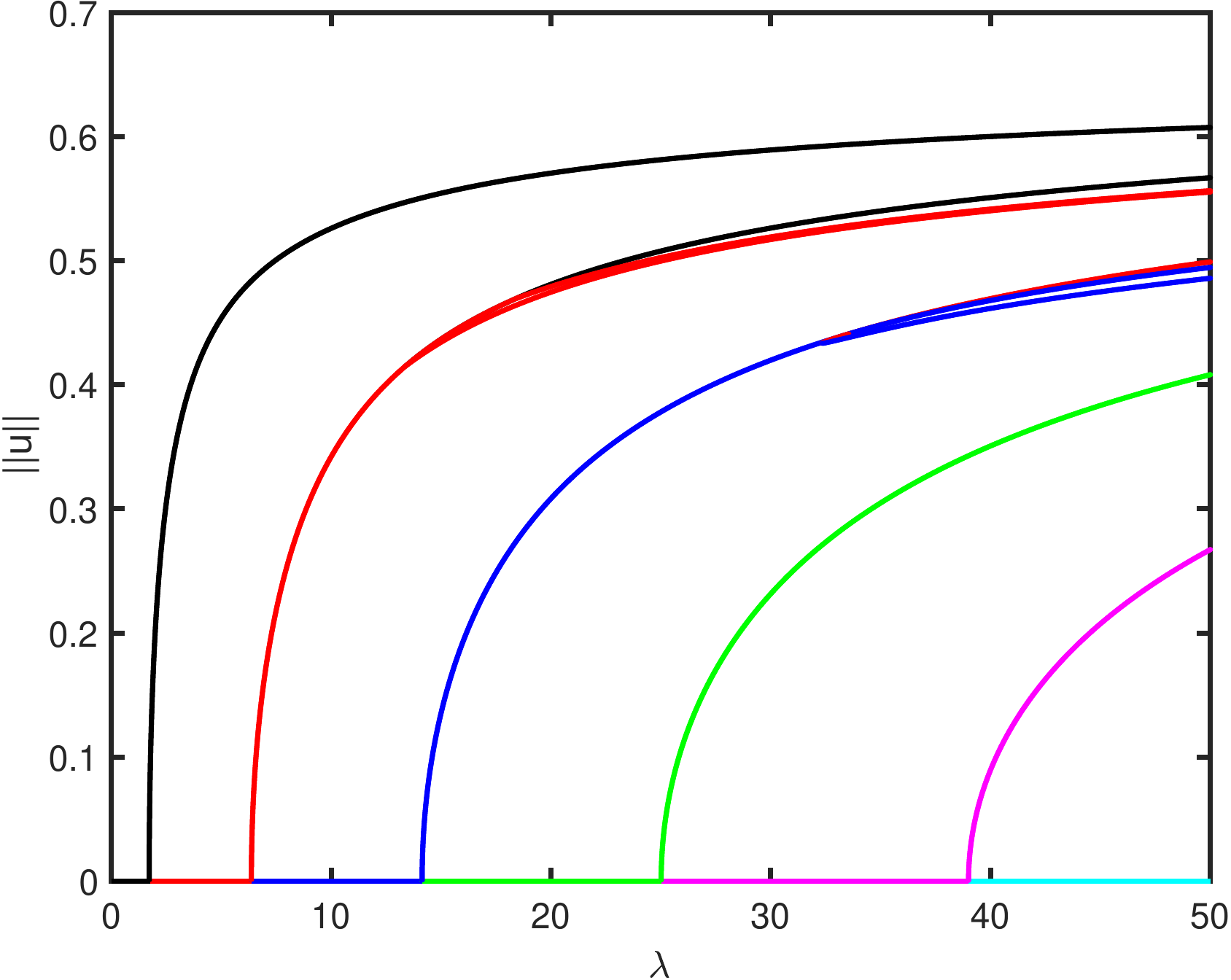}
  \includegraphics[width=0.45\textwidth]{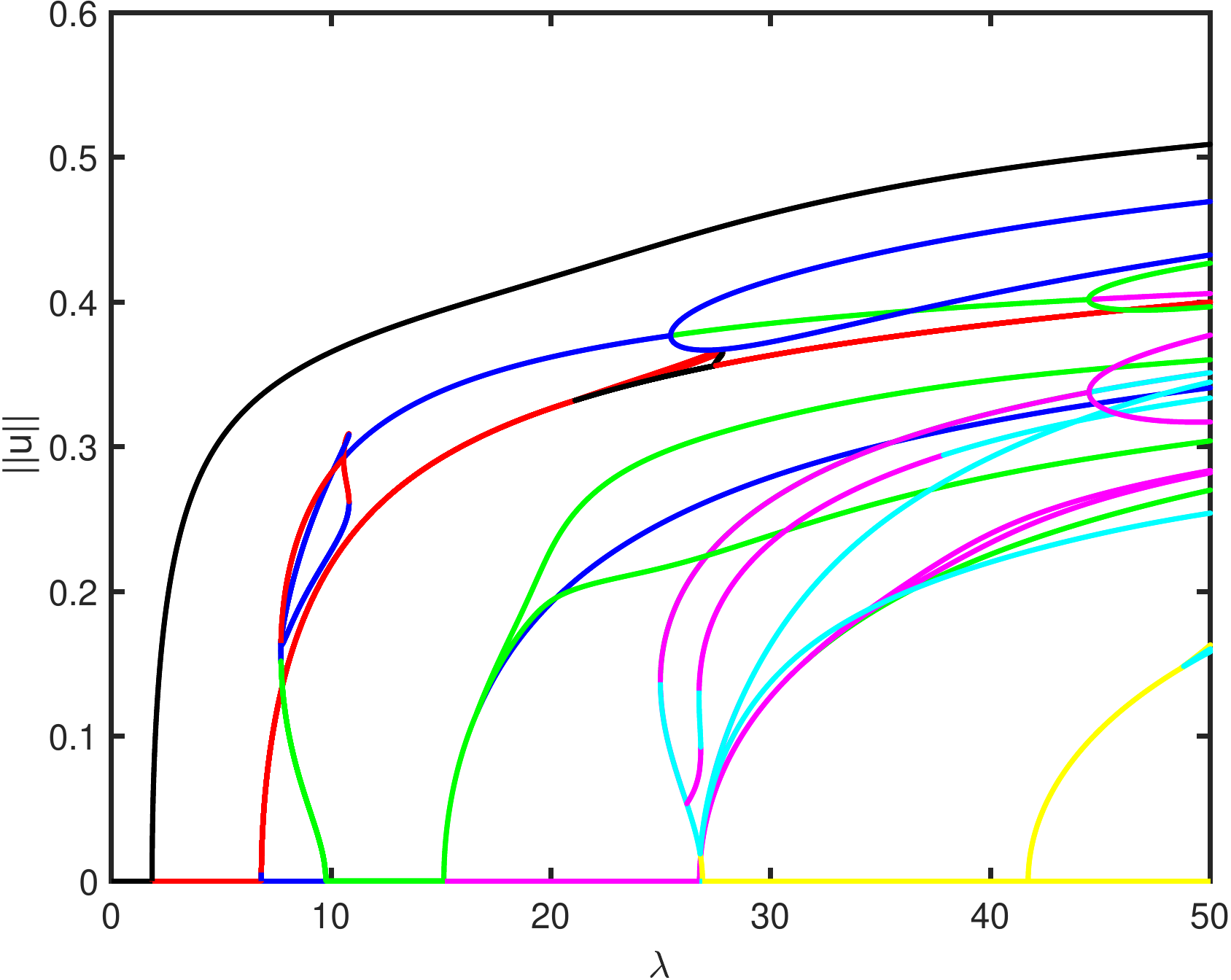}
  \caption{\label{fig:bifdiag1d}
    Sample numerically computed bifurcation diagrams for the triblock
    copolymer equilibrium solutions on the
    one-dimensional domain $\Omega = (0,1)$. The parameters $\sigma=6.0$ and
    on left $\mu = (0.5,0.4,0.1)$ and right $\mu = (0.4,0.2,0.4)$ are held fixed, 
    while~$\lambda = 1/\epsilon^2$ is
    permitted to vary. The color coding represents the index of the equilibrium
    branch, with black, red, blue, green, and magenta corresponding to index~$0$,
    $1$, $2$, $3$, and~$4$, respectively. Since $\mu_3 \approx 0$ in the left image, 
    the diagram is
    qualitatively very similar to the one of the diblock copolymer equation
    shown in~\cite{johnson:etal:13a, lessard:sander:wanner:17a}.
    For the right image, since $\mu_3 \gg 0$, the diagram looks very different.
    }
  \end{center}
\end{figure}
\begin{figure}
  \begin{center}
  \includegraphics[width=0.45\textwidth]{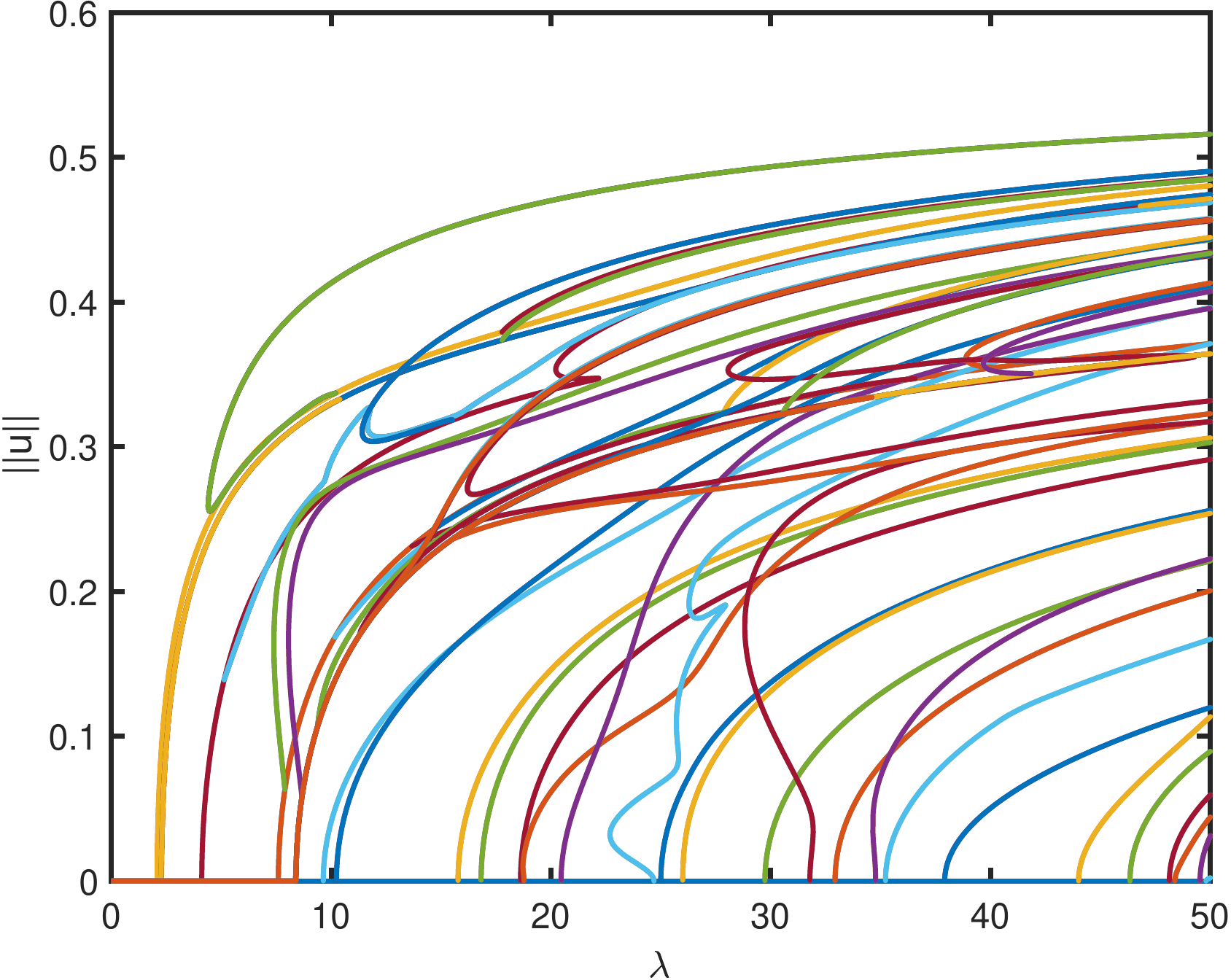}
  \hspace*{0.3cm}
  \includegraphics[width=0.45\textwidth]{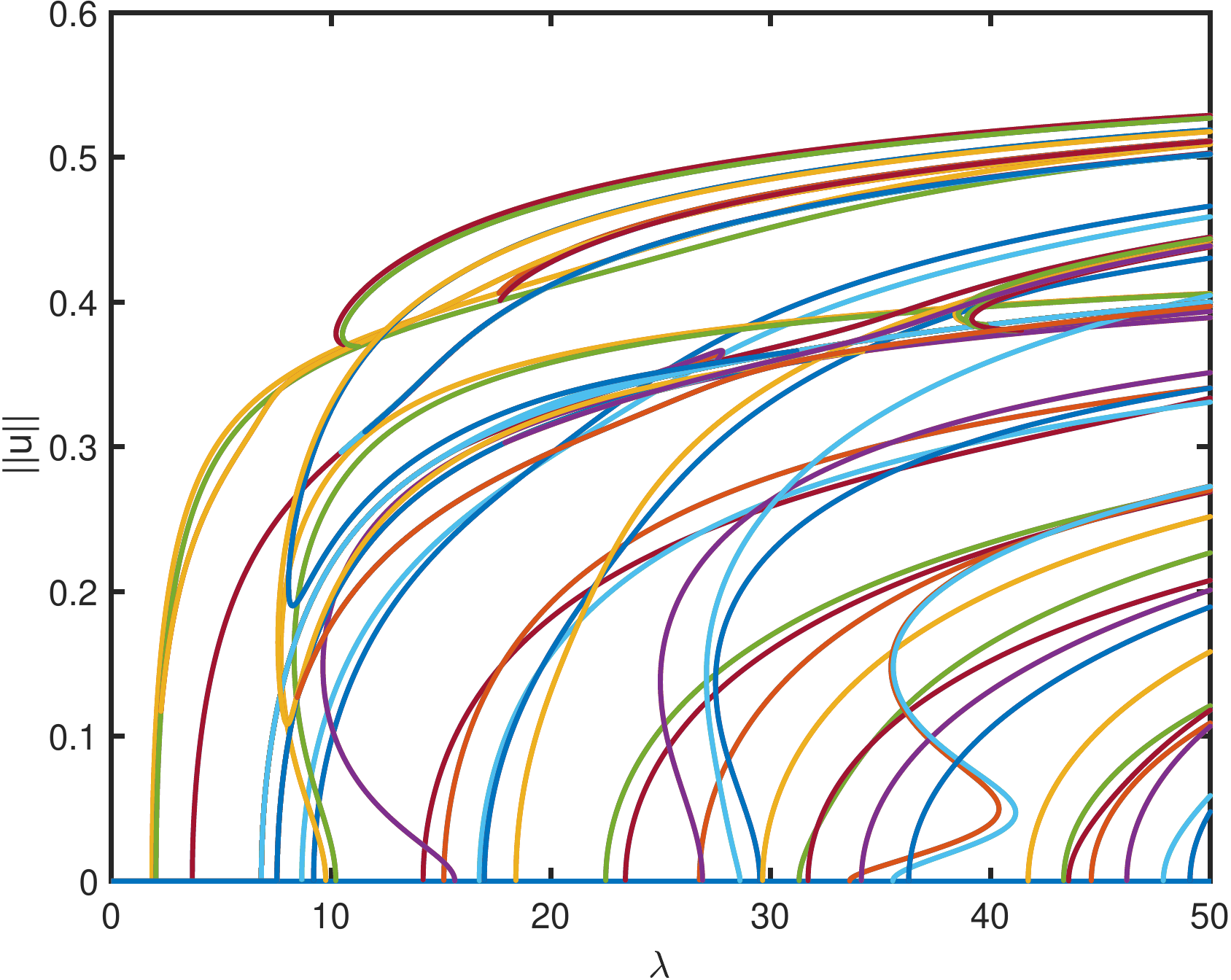} 
  \caption{\label{fig:bifdiag}
           Sample numerically computed triblock copolymer bifurcation diagrams
           for the two-dimensional domain $\Omega = (0,1)^2$. In both figures,
           we have chosen $\sigma=6.0$ and $\lambda = 1/\epsilon^2$ varies.
           The left diagram is for mass vector $\mu = (0.3,0.2,0.5)$, while the
           right panel is for the vector $\mu = (0.4,0.2,0.4)$. These diagrams
           are not a complete set of all equilibrium solutions, but they give
           a sense of the vast array of possible solutions one can find.}
  \end{center}
\end{figure}

To explain this last comment in more detail, we introduce an abbreviation
which will be used throughout the remainder of the paper. As mentioned
in the previous paragraph, the stability of the homogeneous state in the
blue regions of Figure~\ref{fig:stabregion} changes for small enough
values of~$\epsilon$, and in fact it is the limit $\epsilon \to 0$ which
leads to interesting nonhomogeneous stationary states. Uncovering these
states will be accomplished by continuation techniques, and it is therefore
more convenient to instead introduce the new parameter
\begin{displaymath}
  \lambda = \frac{1}{\epsilon^2}
\end{displaymath}
which is then studied in the limit $\lambda \to \infty$. One
can easily see that if we divide both sides of the linearized
problem~(\ref{tbcp:equilin}) by~$\epsilon^2$ and rescale time, the
spectrum of the right-hand side for small $\lambda \approx 0$ is a
small perturbation of the spectrum of the stable bi-Laplacian
operator~$-\Delta^2$. Now consider a homogeneous mass vector~$\mu$
in one of the blue regions in Figure~\ref{fig:stabregion}. Then the
stability of the associated homogeneous state has to change as~$\lambda$
increases, and standard results from bifurcation theory imply the 
appearance of nontrivial equilibrium solutions. This can be seen 
in Figures~\ref{fig:bifdiag1d} and~\ref{fig:bifdiag}, which contain
sample numerically computed bifurcation diagrams for a few different
mass vectors in the blue regions. The diagrams in Figure~\ref{fig:bifdiag1d}
are for the triblock copolymer model on the one-dimensional domain
$\Omega = (0,1)$, while the ones in Figure~\ref{fig:bifdiag} are for
$\Omega = (0,1)^2$. Notice that all of these diagrams indicate the
appearance of a multitude of nontrivial stationary states. Moreover,
while their number seems manageable in the one-dimensional situation,
this is no longer the case in space dimension two. In the remainder of
this paper, we show how these numerically computed equilibrium solutions
can be established rigorously.
\subsection{Computer-assisted equilibrium validation}
\label{sec:capdetail}
Our rigorous equilibrium validation is a significant extension of the 
constructive implicit function theorem approach which
was first introduced in~\cite{sander:wanner:16a, wanner:17a}, and which
was further refined in~\cite{sander:wanner:21a}. In the present subsection,
we demonstrate how it can be adapted to the situation of the triblock
copolymer model. Our goal is to prove the existence of stationary states
for the triblock copolymer model on the domain~$\Omega = (0,1)^d$, where
for the purposes of this paper we focus on $d = 1,2$. Such equilibrium
solutions satisfy the nonlinear elliptic system~(\ref{tbcp:equilsys}),
which for our approach has to be slightly reformulated. Due to the
involved mass constraints, we introduce the transformations
\begin{displaymath}
  u_1 = \mu_1 + w_1
  \qquad\mbox{ and }\qquad
  u_2 = \mu_2 + w_2 \; .
\end{displaymath}
Furthermore, as mentioned already at the end of the last subsection,
rather than visualizing bifurcation diagrams in the
$\epsilon$-$(w_1,w_2)$-coordinate system and the small limit $\epsilon \to 0$,
which would imply that the equilibrium branches of interest become arbitrarily
close together, we instead use the large continuation parameter
$\lambda = 1/\epsilon^2$. Thus instead of~(\ref{tbcp:equilsys}) we
consider the transformed system
\begin{equation} \label{tbcp:equilsysW}
  \begin{array}{rclcrcl}
    \DS -\Delta \left( \Delta w_1 + \lambda f_1(\mu_1 + w_1,
      \mu_2 + w_2) \right) - \lambda \sigma w_1 & = & 0 \; , & \quad &
      \DS \int_\Omega w_1 \, dx & = & 0 \; , \\[3ex]
    \DS -\Delta \left( \Delta w_2 + \lambda f_2(\mu_1 + w_1,
      \mu_2 + w_2) \right) - \lambda \sigma w_2 & = & 0 \; , & \quad &
      \DS \int_\Omega w_2 \, dx & = & 0 \; .
  \end{array}
\end{equation}
The underlying basic function spaces are the following Sobolev spaces, where the subscript $n$
indicates Neumann boundary conditions. 
\begin{equation} \label{tbcp:h2dot}
  \bar{H}_n^2(\Omega) \; = \; \left\{ w \in H^2(\Omega) \; \colon
    \int_\Omega w \, dx = 0 \, , \;
    \frac{\partial w}{\partial \nu} = 0
    \,\mbox{ on } \,\partial\Omega \right\}
  \;\mbox{ and }\;
  \bar{H}_n^{-2}(\Omega) = \bar{H}_n^2(\Omega)^* \; .
\end{equation}
Then the equilibrium system~(\ref{tbcp:equilsysW}) can be written as the
nonlinear zero finding problem
\begin{equation} \label{tbcp:zeroeqn}
  \cF(\lambda,w) = 0
  \qquad\mbox{ for }\qquad
  \cF : \R \times \cX \to \cY
\end{equation}
with
\begin{equation} \label{tbcp:defcxcy}
  \cX = \bar{H}_n^2(\Omega) \times \bar{H}_n^2(\Omega)
  \quad\mbox{ and }\quad
  \cY = \bar{H}_n^{-2}(\Omega) \times \bar{H}_n^{-2}(\Omega) \; ,
\end{equation}
as well as
\begin{equation} \label{tbcp:defcf}
  \begin{array}{rcl}
    \DS \cF(\lambda,(w_1,w_2)) & = & -\Delta (\Delta w +
      \lambda f(\mu + w)) - \lambda \sigma w \\[1ex]
    & = & \DS
      \left( -\Delta \left( \Delta w_1 + \lambda f_1(\mu_1 + w_1,
      \mu_2 + w_2) \right) - \lambda \sigma w_1 , \right. \\[1ex]
    & & \; \left. \DS
      -\Delta \left( \Delta w_2 + \lambda f_2(\mu_1 + w_1,
      \mu_2 + w_2) \right) - \lambda \sigma w_2 \right) \; .
  \end{array}
\end{equation}
This system is solved using the constructive implicit function theorem
presented in~\cite{sander:wanner:16a}, which is based on similar results
in~\cite{plum:09a, wanner:17a}. We state this theorem below.
For its application, one needs to establish
the following four assumptions:
\begin{itemize}
\item[(H1)] Assume that we have found a numerical approximation
$(\lambda^*,w^*) \in \R \times \cX$ of a solution of the
system~(\ref{tbcp:zeroeqn}). Then one needs to find an explicit
constant~$\rho > 0$ such that
\begin{displaymath}
  \left\| \cF(\lambda^*,w^*) \right\|_\cY \le \rho \; .
\end{displaymath}
\item[(H2)] Assume that the Fr\'echet derivative~$D_w\cF(\lambda^*,w^*)
\in \cL(\cX,\cY)$ is invertible, and that its inverse~$D_w\cF(\lambda^*,
w^*)^{-1} : \cY \to \cX$ is bounded and satisfies the estimate
\begin{displaymath}
  \left\| D_w\cF(\lambda^*,w^*)^{-1} \right\|_{\cL(\cY,\cX)} \le K
  \; ,
\end{displaymath}
for some explicit constant $K > 0$, where~$\| \cdot \|_{\cL(\cY,\cX)}$
denotes the operator norm in~$\cL(\cY,\cX)$.
\item[(H3)] There exist constants~$L_1, L_2, \ell_w > 0$ and $\ell_\lambda
\ge 0$ such that for all pairs $(\lambda,w) \in \R \times \cX$ with
$\| w - w^* \|_\cX \le \ell_w$ and $|\lambda - \lambda^*| \le \ell_\lambda$
we have
\begin{displaymath}
  \left\| D_w\cF(\lambda,w) -
    D_w\cF(\lambda^*,w^*) \right\|_{\cL(\cX,\cY)} \le
    L_1 \left\| w - w^* \right\|_\cX +
    L_2 \left|\lambda - \lambda^* \right| \; .
\end{displaymath}
\item[(H4)] There exist constants~$L_3, L_4 > 0$ such that for all
$\lambda \in \R$ with $|\lambda - \lambda^*| \le \ell_\lambda$ one has
\begin{displaymath}
  \left\| D_\lambda \cF(\lambda,w^*) \right\|_{\cY} \le
    L_3 + L_4 \left| \lambda - \lambda^* \right| \; ,
\end{displaymath}
where~$\ell_\lambda$ is the constant from~(H3).
\end{itemize}
The constructive implicit function theorem from~\cite{sander:wanner:16a}
then takes the following form.
\begin{theorem}[Constructive Implicit Function Theorem]
\label{nift:thm}
Let~$\cX$ and~$\cY$ denote the Hilbert spaces defined in~(\ref{tbcp:defcxcy}),
and let~$\cF : \R \times \cX \to \cY$ be defined as in~(\ref{tbcp:defcf}).
Furthermore, suppose that the pair~$(\lambda^*,w^*) \in \R \times \cX$ satisfies
hypotheses~(H1) through~(H4). Finally, suppose that
\begin{equation} \label{nift:thm1}
  4 K^2 \rho L_1 < 1
  \qquad\mbox{ and }\qquad
  2 K \rho < \ell_w \; .
\end{equation}
Then there exist pairs of constants~$(\delta_\lambda,\delta_w)$ with
$0 \le \delta_\lambda \le \ell_\lambda$ and $0 < \delta_w \le \ell_w$,
as well as
\begin{equation} \label{nift:thm2}
  2 K L_1 \delta_w + 2 K L_2 \delta_\lambda \le 1
  \qquad\mbox{ and }\qquad
  2 K \rho + 2 K L_3 \delta_\lambda + 2 K L_4 \delta_\lambda^2
    \le \delta_w  \; ,
\end{equation}
and for each such pair the following holds. For every~$\lambda \in \R$
with $|\lambda - \lambda^*| \le \delta_\lambda$ there exists a uniquely
determined element~$w(\lambda) \in \cX$ with $\| w(\lambda) - w^* \|_\cX
\le \delta_w$ such that $\cF(\lambda, w(\lambda)) = 0$.
In other words, if we define
\begin{displaymath}
  \cB_{\delta_w}^\cX = \left\{ w \in \cX \; : \;
    \left\| w - w^* \right\|_\cX \le \delta_w \right\}
  \quad\mbox{ and }\quad
  \cB_{\delta_\lambda}^\R = \left\{ \lambda \in \R \; : \;
    \left| \lambda - \lambda^* \right| \le \delta_\lambda \right\}
  \; ,
\end{displaymath}
then all solutions of the nonlinear problem $\cF(\lambda,w)=0$ in the
set $\cB_{\delta_\lambda}^\R \times \cB_{\delta_w}^\cX$ lie on the graph
of the function $\lambda \mapsto w(\lambda)$.  In addition, the function
$\lambda \mapsto w(\lambda)$ is infinitely-many times Fr\'echet
differentiable.
\end{theorem}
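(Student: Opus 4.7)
The plan is to recast $\cF(\lambda,w) = 0$ as a fixed point problem for the Newton-like map
\[
  T_\lambda(w) \;:=\; w - D_w\cF(\lambda^*,w^*)^{-1}\,\cF(\lambda,w),
\]
and, for each admissible pair $(\delta_\lambda,\delta_w)$ satisfying (\ref{nift:thm2}), to show that $T_\lambda$ is a self-mapping contraction of the closed ball $\cB_{\delta_w}^\cX$ uniformly in $\lambda \in \cB_{\delta_\lambda}^\R$. Banach's fixed point theorem then delivers the unique $w(\lambda) \in \cB_{\delta_w}^\cX$ with $\cF(\lambda,w(\lambda)) = 0$, since invertibility of $D_w\cF(\lambda^*,w^*)$ from (H2) makes fixed points of $T_\lambda$ equivalent to zeros of $\cF(\lambda,\cdot)$. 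Smoothness of $\lambda \mapsto w(\lambda)$ then follows from the classical implicit function theorem, once one checks via a Neumann-series perturbation of $D_w\cF(\lambda^*,w^*)$ that $D_w\cF(\lambda,w(\lambda))$ remains a Banach isomorphism along the branch.

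For the self-mapping property I would begin from the identity $T_\lambda(w) - w^* = D_w\cF(\lambda^*,w^*)^{-1}[D_w\cF(\lambda^*,w^*)(w-w^*) - \cF(\lambda,w)]$ and insert the mean-value representation $\cF(\lambda,w) = \cF(\lambda,w^*) + \int_0^1 D_w\cF(\lambda, w^* + t(w-w^*))(w-w^*)\,dt$. This splits the bracket into $-\cF(\lambda,w^*)$, controlled by integrating $D_\lambda\cF$ from $\lambda^*$ to $\lambda$ so that (H1) and (H4) give $\|\cF(\lambda,w^*)\|_\cY \le \rho + L_3|\lambda-\lambda^*| + \tfrac{L_4}{2}|\lambda-\lambda^*|^2$, plus an integrated derivative difference bounded by (H3) through $\int_0^1 (L_1 t\|w-w^*\|_\cX + L_2|\lambda-\lambda^*|)\,dt = \tfrac{L_1}{2}\|w-w^*\|_\cX + L_2|\lambda-\lambda^*|$. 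Applying the operator norm bound $K$ and restricting to the ball yields
\[
  \|T_\lambda(w)-w^*\|_\cX \;\le\; K\rho + KL_3\delta_\lambda + \tfrac{KL_4}{2}\delta_\lambda^2 + K\bigl(\tfrac{L_1}{2}\delta_w + L_2\delta_\lambda\bigr)\delta_w,
\]
and the two bounds in (\ref{nift:thm2}) are arranged precisely so that each of the two groupings on the right is at most $\delta_w/2$, giving the invariance of $\cB_{\delta_w}^\cX$.

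The contraction estimate is obtained by a parallel computation: for $w_1,w_2 \in \cB_{\delta_w}^\cX$ the segment $w_2 + t(w_1-w_2)$ stays within distance $\delta_w$ of $w^*$, so (H3) produces $\|T_\lambda(w_1)-T_\lambda(w_2)\|_\cX \le K(L_1\delta_w + L_2\delta_\lambda)\|w_1-w_2\|_\cX \le \tfrac12 \|w_1-w_2\|_\cX$ by the first half of (\ref{nift:thm2}). Existence of at least one admissible pair $(\delta_\lambda,\delta_w)$ is then verified by taking $\delta_\lambda = 0$: (\ref{nift:thm2}) reduces to $2K\rho \le \delta_w$ and $2KL_1\delta_w \le 1$, which are jointly solvable exactly when $4K^2 L_1 \rho < 1$; the companion condition $2K\rho < \ell_w$ in (\ref{nift:thm1}) ensures $\delta_w \le \ell_w$ so that (H3) is applicable, and continuity in $\delta_\lambda$ allows a small strictly positive $\delta_\lambda$. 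There is no single difficult step here beyond the standard Newton-Kantorovich paradigm; the main obstacle is the careful bookkeeping to align each residual, Lipschitz, and parameter-variation contribution with the sharp factors of $2$ built into (\ref{nift:thm2}), so that the final constants match the cleanly stated hypotheses rather than some crude weakening.
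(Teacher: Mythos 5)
Your proposal is correct, and it is the standard uniform-contraction (Newton--Kantorovich) argument for the Newton-like map $T_\lambda(w)=w-D_w\cF(\lambda^*,w^*)^{-1}\cF(\lambda,w)$; the factors of $2$ in \eqref{nift:thm2} are accounted for exactly as you describe. The paper does not reprove this theorem but cites \cite{sander:wanner:16a}, whose proof proceeds along essentially the same lines, so your route matches the intended one.
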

The above theorem is used for all of the results given in the next
Subsection~\ref{sec:rigvermic}. The following is a summary of how we 
approach each of the hypotheses~(H1)--(H4).
\begin{itemize}
\item The numerical approximation of a potential equilibrium state is
found using AUTO~\cite{doedel:81a} in the form of a finite Fourier
cosine sum, as described in more detail in Subsection~\ref{sec:fun}.
\item The residual bound~$\rho$ in~(H1) is computed using the specific
norms we use on~$\cY$. This is accomplished by evaluating a suitable sum
which depends on the Fourier coefficients in this representation using
the interval arithmetic package Intlab~\cite{rump:99a}, and makes use of
the equivalent Sobolev norms which will be described in
Subsection~\ref{sec:fun} below.
\item The inverse norm bound in~(H2) is established in
Section~\ref{sec:invnormbnd}. This estimate is derived in the 
broad context of~\eqref{intro:linop1} and~\eqref{intro:linop2}
given in Section~\ref{secintro}, which contains the triblock copolymer
model linearization as a special case. It relies heavily on the framework
developed in Subsections~\ref{sec:fun}, \ref{sec:sob}, and~\ref{sec:proj}.
\item The Lipschitz estimates given in~(H3) and~(H4) that
are required in the specific case of the triblock copolymer equation
are derived in Subsection~\ref{sec:lipschitz}.
\end{itemize}
The details of these more technical steps of the paper are contained
in Sections~\ref{sec:defandsetup} and~\ref{sec:invnormbnd}. First,
however, we present some sample results.
\subsection{Rigorously verified microstructures}
\label{sec:rigvermic}
In this section, we illustrate the methods of this paper by rigorously 
validating equilibrium solutions for the triblock copolymer equation 
for fixed values of~$\epsilon$, $\sigma$, and~$\mu$.  In particular, Figures~\ref{fig:tbcp1dimages},~\ref{fig:tbcp2dimages:a},~\ref{fig:tbcp2dimages:b},
and~\ref{fig:tbcp2dimages:c} show numerically computed approximations of solutions.
As mentioned before, in these figures, instead of using the parameter~$\epsilon$,
we give the results for fixed~$\lambda=1/\epsilon^2$. From the numerics alone, we
cannot guarantee that a qualitatively similar solution exists near the computed
solution, or if this solution is isolated. However in each case, and using the methods
established in the paper, we have rigorously established that for the
given parameters there exists a true solution to the triblock copolymer equation
within a known fixed distance of the depicted solution, and we specify the
distance in each case. Furthermore, we have also validated isolation of the solution. 
That is, the true solution is unique within a fixed radius ball around
the computed solution, which is also explicitly specified in each case. 

To implement the verification outlined at the end of the previous section
without a parameter search,
one must make a tradeoff. Specifically, when applying Theorem~\ref{thm:k},
one must find an integer $N \in \N$ and $\tau > 0$ such that a relation of the form
\begin{equation}
  \label{eq:thm-k-preview}
  \frac{1}{N^2}\sqrt{A(N)^2+B^2} \leq \tau < 1
\end{equation}
holds, where~$A(N)$ and~$B$ are constants, and~$A(N)$ depends on~$N$.
The inverse norm bound is then determined by dividing by $1-\tau$.
This division leads to a tradeoff in the number of basis functions per spatial dimension ($N$) 
and the desired sharpness of the norm bound.
In practice, we usually target $\tau \approx 0.75$ as this prevents unnecessary inflation
of the final bound while remaining computationally feasible.
In order to achieve these~$\tau$ values, 
we must find an~$N$ such that~\eqref{eq:thm-k-preview} holds.
This is difficult to do \emph{a priori} since~$A(N)$ depends on the choice
of~$N$ through a numerical computation. Thus, we make the following simplifying assumptions
for an initial estimate of~$N$:
\begin{enumerate}
  \item $A(N)$ is bounded above as $N$ increases
  \item $B$ dominates $A(N)$ so that $\sqrt{A(N)^2+B^2} \approx B$.
\end{enumerate}
The latter assumption provides a simple estimation for $N$ that would result in validation:
\begin{displaymath}
  \frac{B}{\tau} \leq N^2.
\end{displaymath}
We emphasize this approach simply avoids a computationally intense search for~$N$.
Once~$N$ is chosen, the value of~$A(N)$ can be computed directly as in Theorem~\ref{thm:k},
and~\eqref{eq:thm-k-preview} can be verified.
We take this simplified approach because the specific forms of~$A(N)$ and~$B$ in Theorem~\ref{thm:k}
satisfy the assumptions, and it provides a more computationally feasible value of~$N$ in general.
In one dimension, this is not an issue, as the 
calculation is very quick, a few seconds with Intlab 12 and Matlab 2020b on a Mac mini using an 
Intel processor with 3.2~GHz and 32~GB memory, under MacOS Monterey. 
However, for the two-dimensional case, this becomes a larger issue, since
the calculation of $K$ for  $\lambda = 10$ takes around 20 seconds, 
and for $\lambda = 20$ it takes 3-5 minutes. 
For $\lambda$ much larger, the required $N$ results in a full matrix that is too big to keep in memory. 
We note that these large $N$ values slow our implementation significantly because
we use full matrices at every step.
However, a vast majority of the entries in these matrices are small in magnitude. For example, 
in the cases we considered, the observed percentage of matrix entries below $10^{-16}$ in 
magnitude is about 97\%-98\% in one dimension, and in two dimensions, this becomes over 99\%.  
Thus one could assuredly speed up the implementation significantly by using 
sparse approximations. We leave this to a future effort.
\begin{table}
\begin{center}
  \begin{tabular}{|c|c|c||c|c|c|c|c|}
  \hline
  Label & $\mu$       & $\lambda$ & $N$   & $K$ & $(L_1,L_2,L_3)$ & $\delta_\alpha$  & $\delta_x$ \\
  \hline\hline
 4a & $(0.5,0.5,0.0)$  & 50  & 216 & 19.256  & $(2.79,1.45,0.24)$ &  9.3759e-04   & 8.8156e-03   \\ \hline
 4b & $(0.5,0.5,0.0)$  & 50  & 242 & 21.802  & $(2.71,1.33,0.24)$ &  7.7418e-04   & 8.0867e-03  \\ \hline
 4c & $(0.5,0.5,0.1)$  & 50  & 219 & 19.595  & $(2.99,1.49,0.26)$ &  8.1679e-04    & 8.3619e-03 \\ \hline
 4d & $(0.5,0.4,0.1)$  & 50  & 245 & 21.617  & $(2.76,1.40,0.25)$ & 7.3311e-04    & 8.0180e-03 \\ \hline      
  \end{tabular}
  \vspace*{0.3cm}
  \caption{\label{table:1d}
           Solution validation information for the one-dimensional solutions depicted in 
           Figure~\ref{fig:tbcp1dimages}. For all solutions, $\sigma  = 6$.
           Note that the values of $K$  and $L_i$  are computed at higher 
           precision than reported here. They are only given here for illustration of 
           how we establish the two $\delta$ values.  }
\end{center}
\end{table}
\begin{figure}
  \begin{center}
  \includegraphics[width=0.45\textwidth]{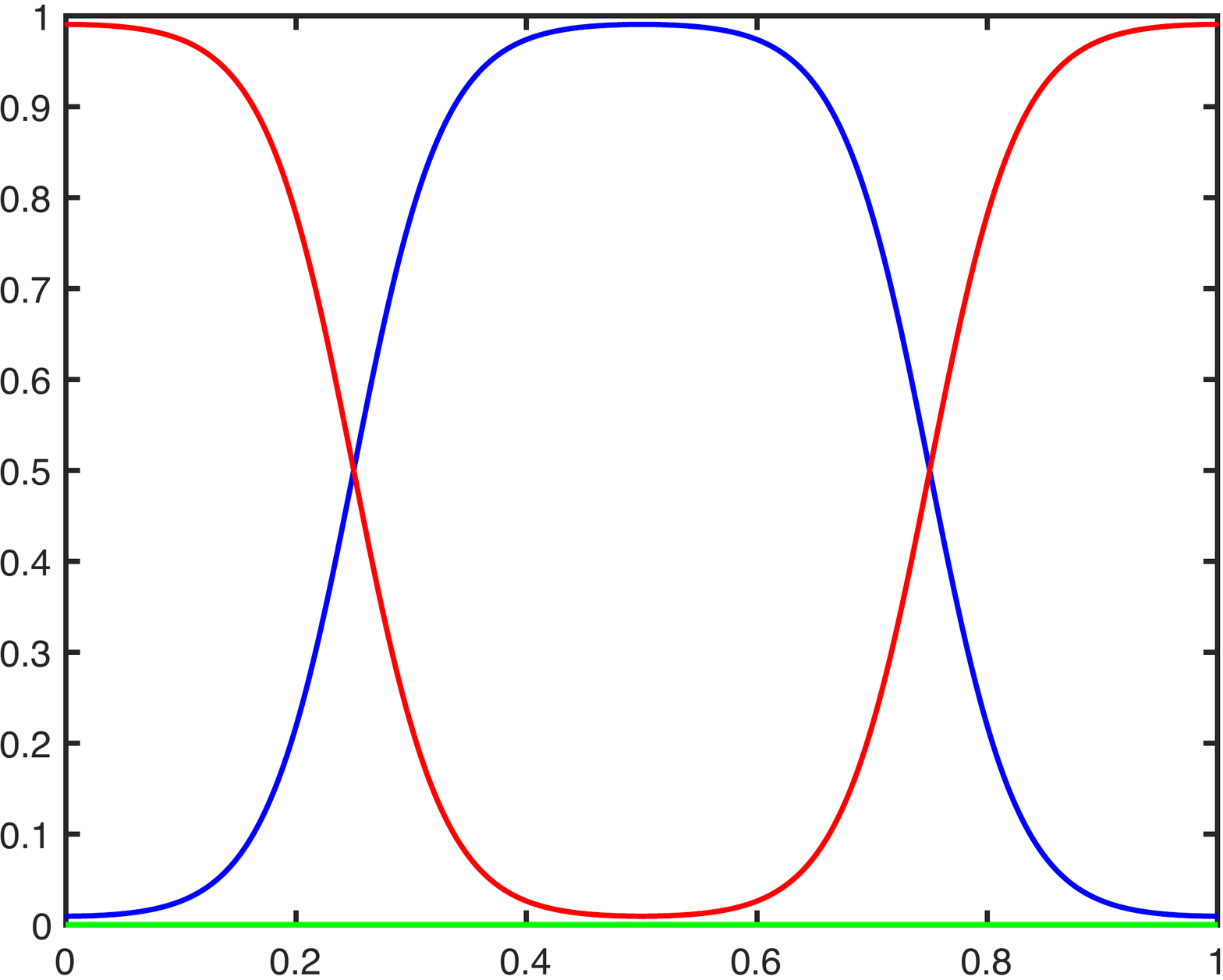}
  \hspace*{0.3cm}
  \includegraphics[width=0.45\textwidth]{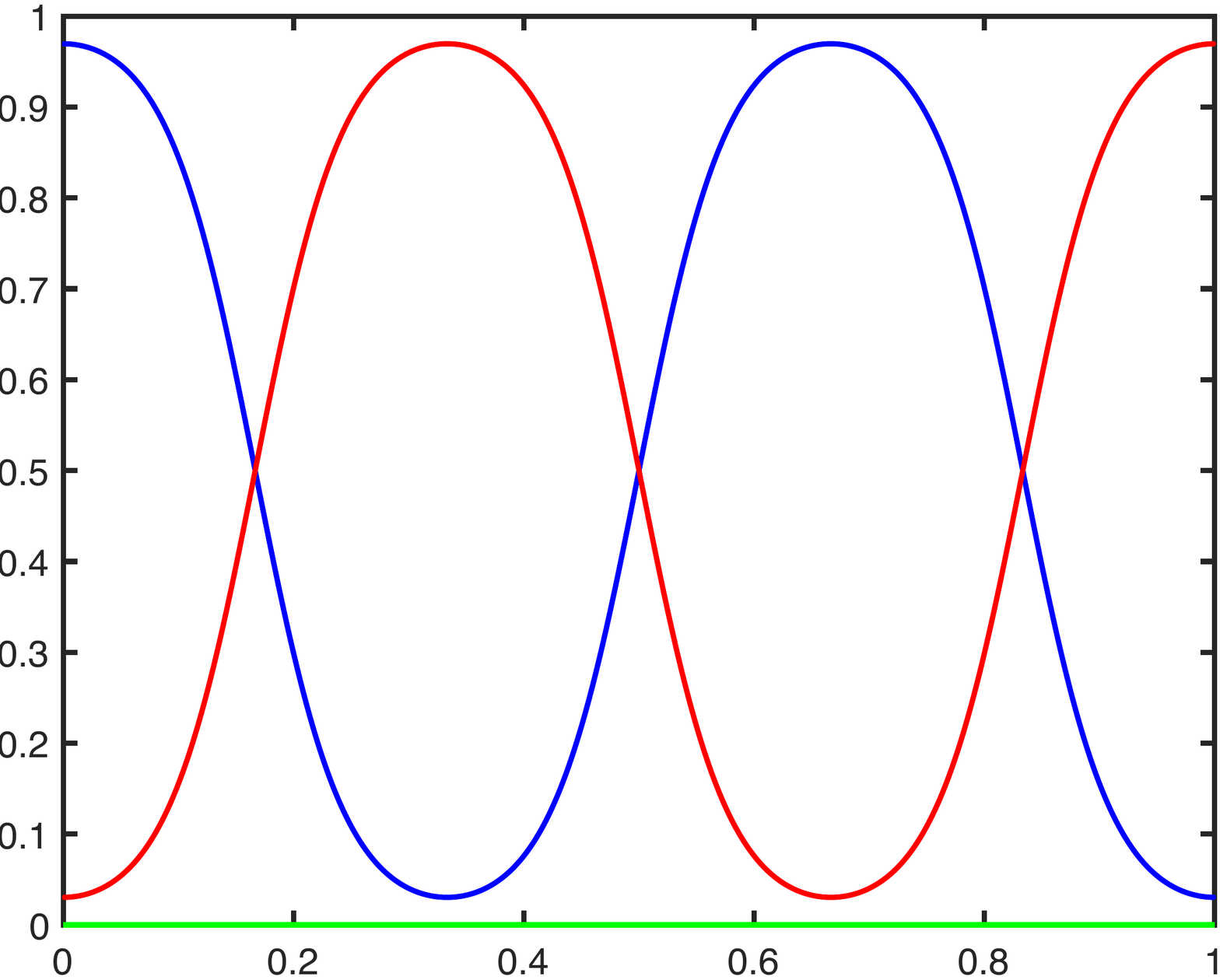} \\ 
  \includegraphics[width=0.45\textwidth]{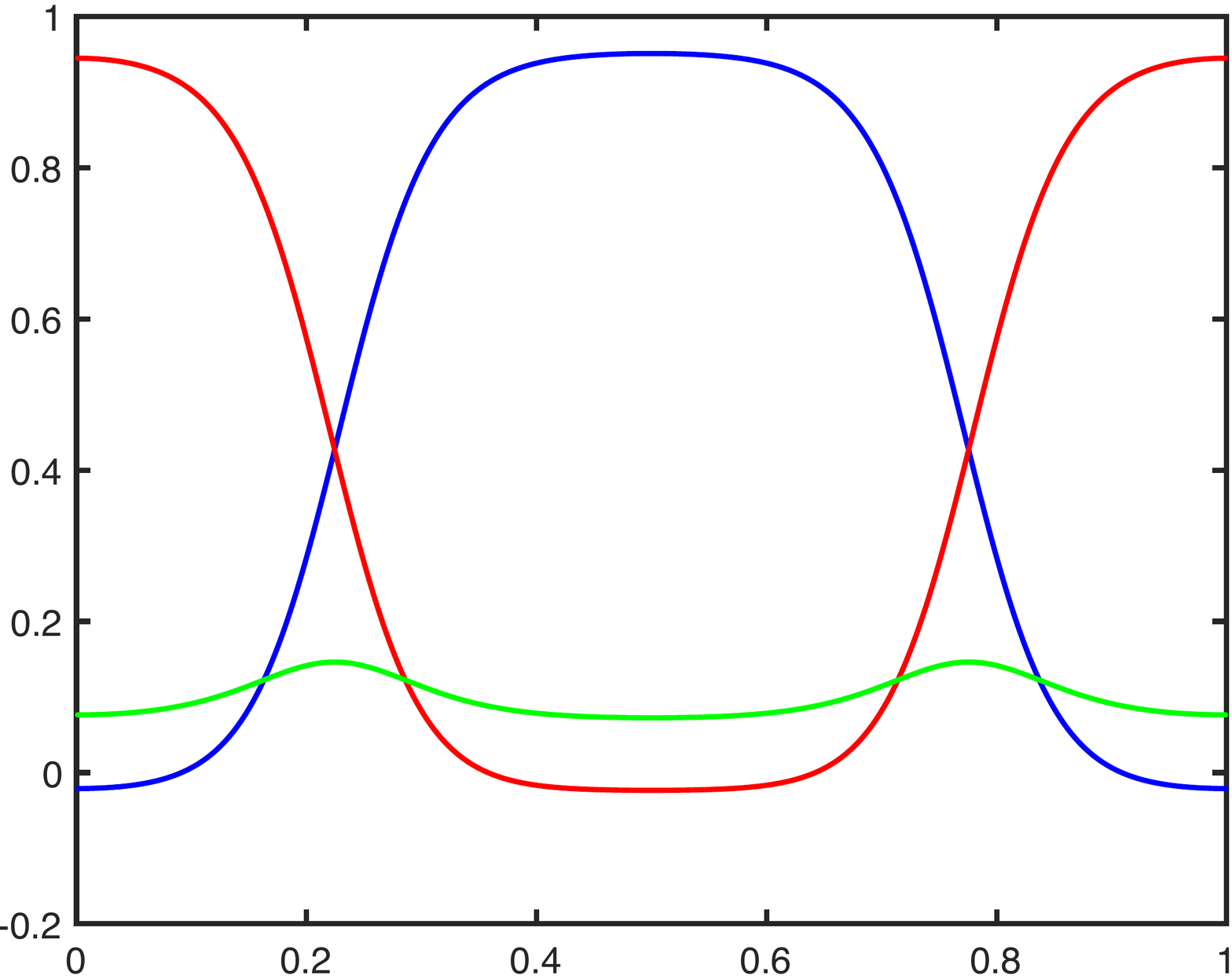}
    \hspace*{0.3cm}
  \includegraphics[width=0.45\textwidth]{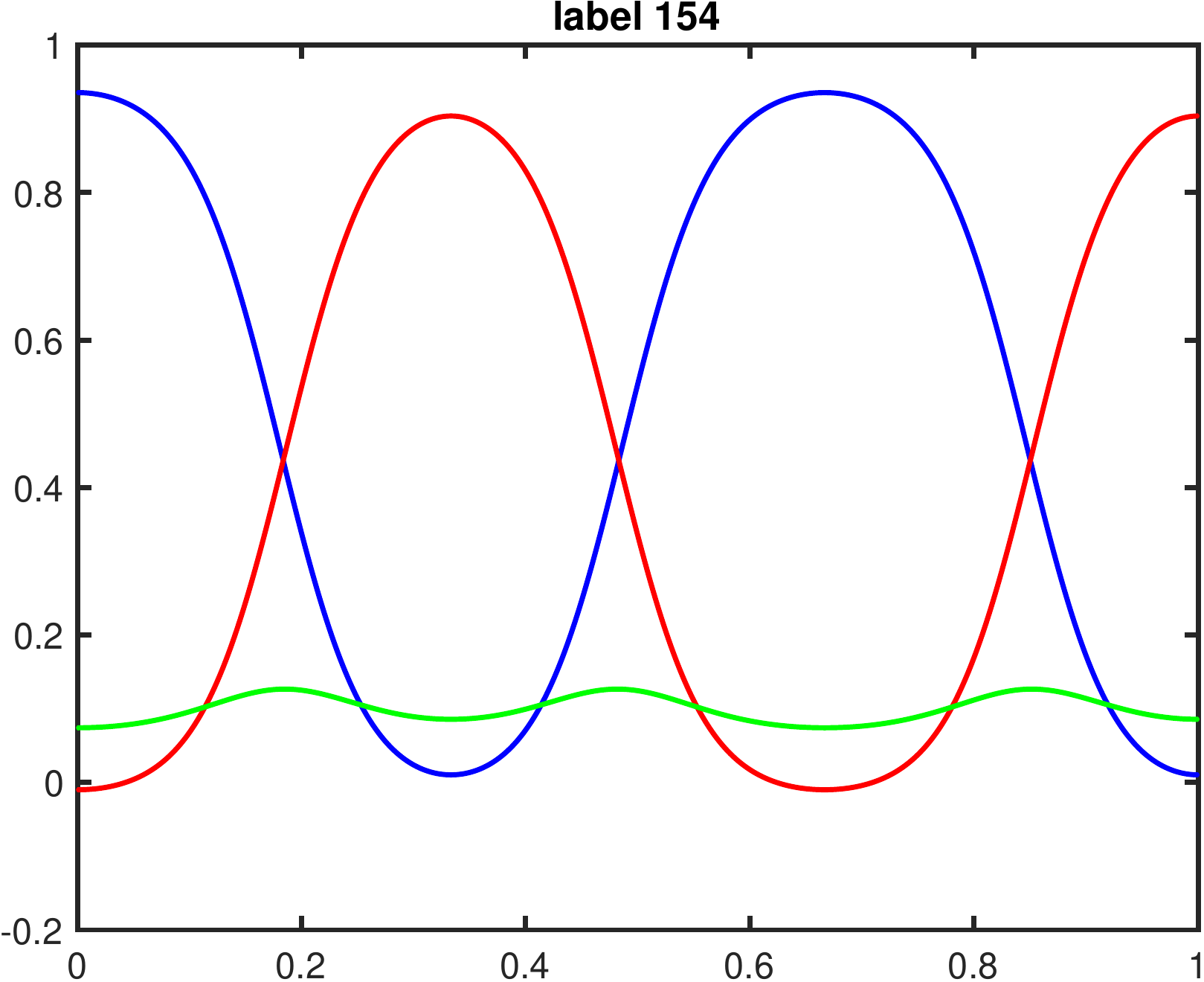}
  \caption{\label{fig:tbcp1dimages}
    Sample validated triblock copolymer equilibrium solutions on the one-dimensional domain $\Omega = (0,1)$.
    In all figures, we have chosen $\sigma=6$ and $\lambda = 50$. 
    The top row (left to right: a,b) is for $\mu = (0.5,0.5,0.0)$, while the bottom row (c,d) 
    has $\mu = (0.5,0.4,0.1)$.
    In all plots, the solutions~$u_1$, $u_2$, and~$u_3$ are shown in
    blue, red, and green, respectively. The validation parameters are listed in Table~\ref{table:1d}.}
  \end{center}
\end{figure}

We now describe the selection of example equilibria for which we have rigorously
verified stationary solutions. Figure~\ref{fig:tbcp1dimages} shows a set of equilibria
which have been computed for the~$\mu$ values $(0.5,0.5,0)$ and $(0.5,0.4,0.1)$, located 
in the light blue region for the case of  the one-dimensional domain $\Omega = (0,1)$.
In this case,  we consider the parameter $\lambda = 50$. For all of the shown equilibrium solutions, 
we consider the
parameter value $\sigma = 6$. Notice that for $\mu_3 = 0$ the
triblock copolymer equation reduces to the diblock copolymer model. Therefore, these
solutions are respectively equal to and close to cases previously studied in the 
case of two monomer blocks. In fact, even for $\mu_3 = 0.1$ the triblock copolymer
model behaves similarly to the two-component case, and Figure~\ref{fig:bifdiag1d}
illustrates this in the left image by showing the bifurcation diagram for the second~$\mu$ value above
in one space dimension. Note that the diagram bears a striking resemblance
to the bifurcation diagram for the diblock copolymer equation, see~\cite{johnson:etal:13a,
lessard:sander:wanner:17a}. To emphasize that this similarity is based on 
$\mu$ rather than the one spatial dimension, the right image of the figure shows a bifurcation diagram 
for one space dimension such that $\mu = (0.4,0.2,0.4)$, i.e.
 all three components of $\mu$ are far from 
zero. The diagram  is a significant departure from 
the diblock copolymer case.
\begin{table}
\begin{center}
  \begin{tabular}{|c|c|c||c|c|c|c|c|}
  \hline
  Label & $\mu$       & $\lambda$ & $N$   & $K$ & $(L_1,L_2,L_3)$ & $\delta_\alpha$  & $\delta_x$ \\
  \hline\hline
 5a & $(0.3,0.2,0.5)$  	 & 10    & 46  & 968.48 & $(1.06,1.54,0.308)$&    8.5889e-07  & 4.9838e-04  \\ \hline
 5b & $(0.3,0.2,0.5)$  	 & 20    & 86  & 7898.2   & $(1.98,2.01,0.296)$&    5.6899e-09  & 3.1894e-05  \\ \hline
 5c & $(0.3,0.2,0.5)$  	 & 20    & 98  & 434.29 & $(1.95,1.43,0.294)$&    2.3028e-06   & 5.8787e-04  \\ \hline
 6a & $(0.35,0.33,0.32)$ & 10    & 48  & 401.17 & $(0.974,1.55,0.277)$&   6.7908e-06  & 1.3841e-03   \\ \hline      
 6b & $(0.35,0.33,0.32)$ & 20    & 99  & 572.03 & $(2.17,1.85,0.296)$  &  1.2801e-06   & 4.1691e-04   \\ \hline      
 6c & $(0.35,0.33,0.32)$ & 20    & 97  &  674.44& $(2.04,1.85,0.285)$ &  9.9216e-07   & 3.7167e-04  \\ \hline      
 7a & $(0.4,0.2,0.4)$  	 & 10    & 40  &  238.08& $(0.731,0.82,0.248)$& 3.2873e-05   & 3.3207e-03   \\ \hline        
 7b & $(0.4,0.2,0.4)$  	 & 20    & 94  &  361.27& $(2.06,1.80,0.298)$ &  3.1087e-06   & 6.6886e-04   \\ \hline        
 7c & $(0.4,0.2,0.4)$  	 & 20    & 80  &  117.59& $(1.84,1.50,0.280)$ &   3.4630e-05  & 2.2780e-03   \\ \hline        
  \end{tabular}
  \vspace*{0.3cm}
  \caption{\label{table:2d}
           Solution validation information for the two-dimensional solutions depicted in 
           Fig.~\ref{fig:tbcp2dimages:a} and~\ref{fig:tbcp2dimages:b}. 
           For all solutions, $\sigma  = 6$. As with the one-dimensional case, the 
           values of $K$ and $L_i$ are not stated at the full precision that we computed.  }
\end{center}
\end{table}
\begin{figure}
  \begin{center}
  \includegraphics[width=0.32\textwidth]{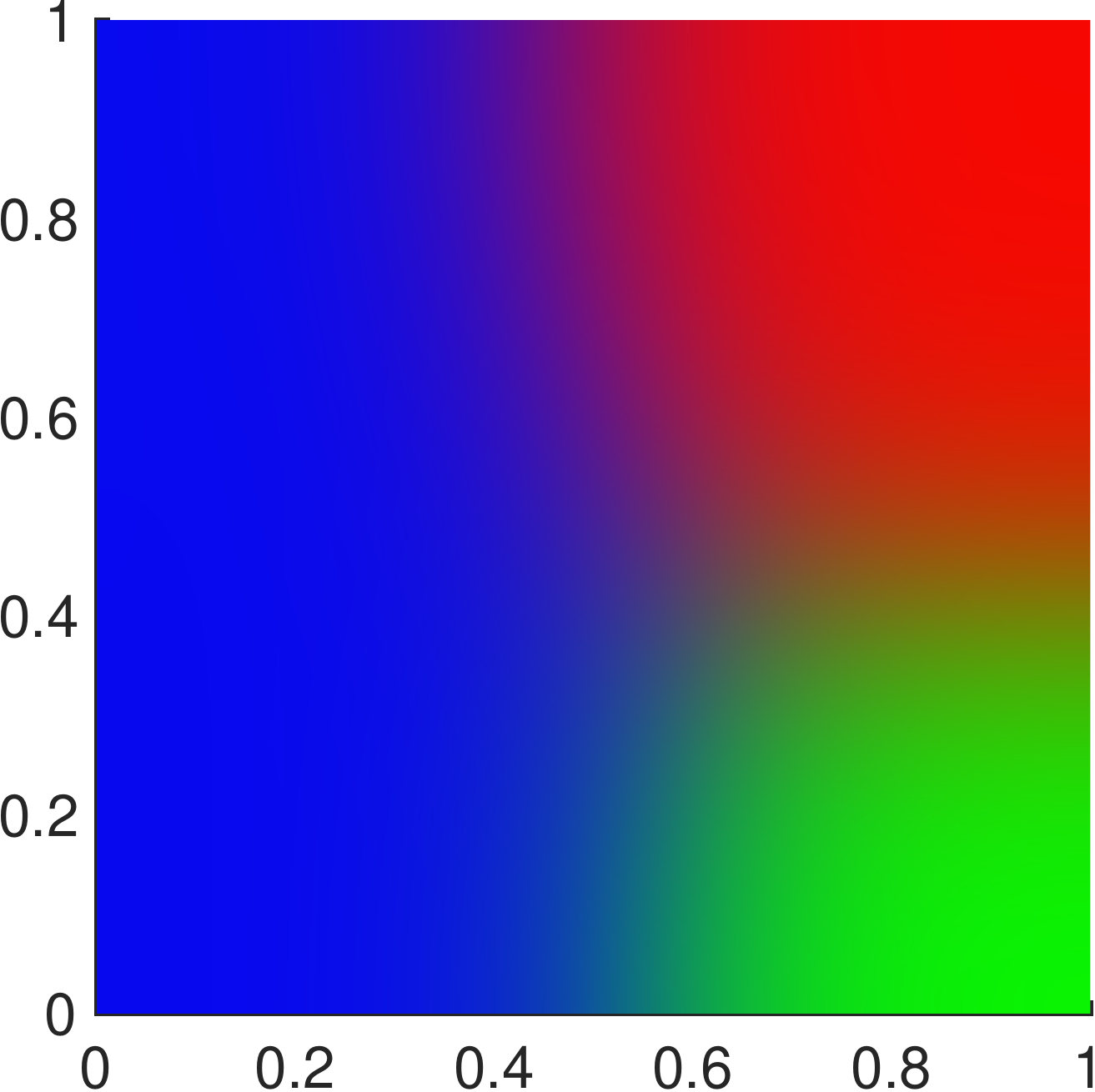}
  \includegraphics[width=0.32\textwidth]{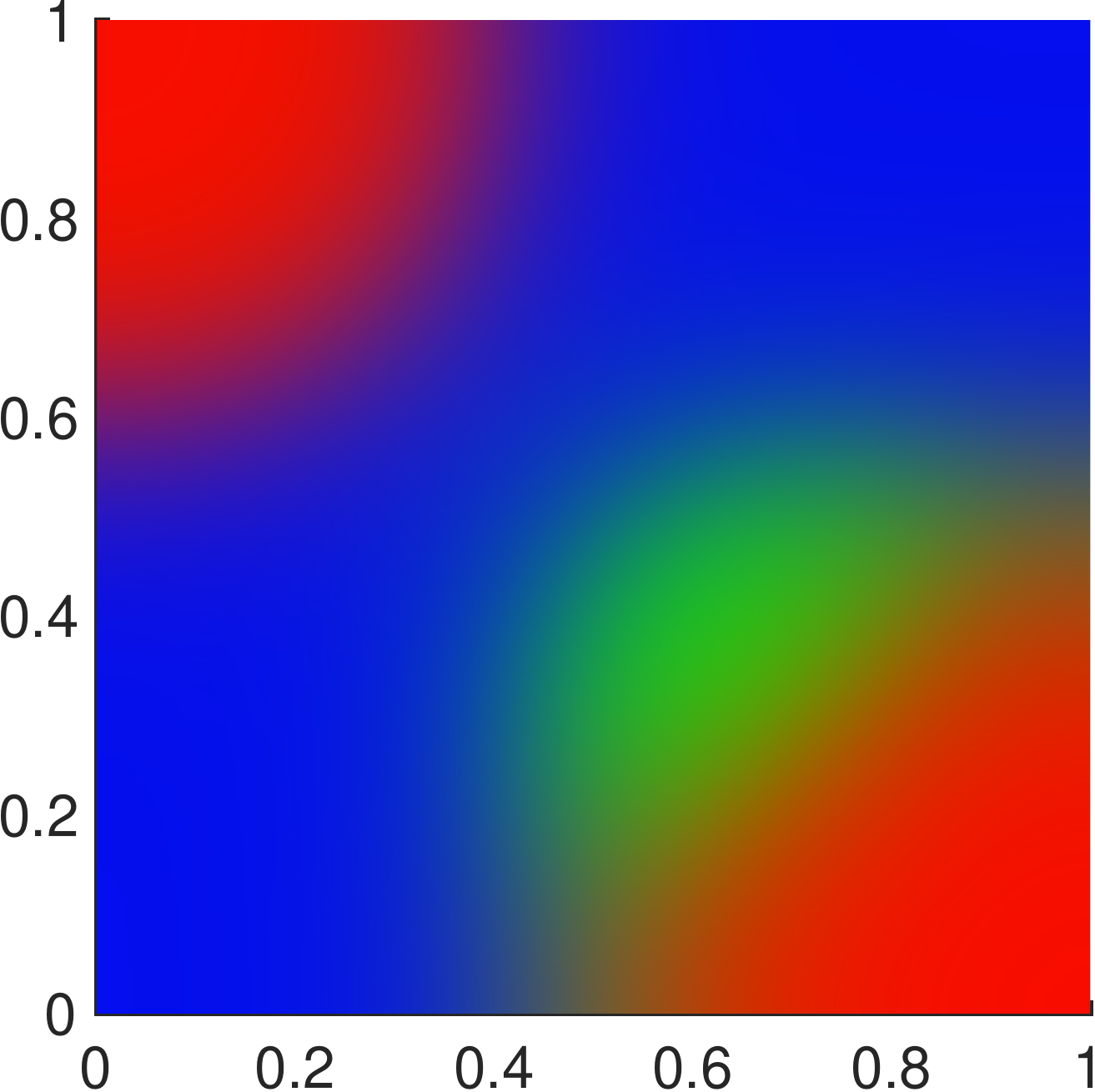} 
  \includegraphics[width=0.32\textwidth]{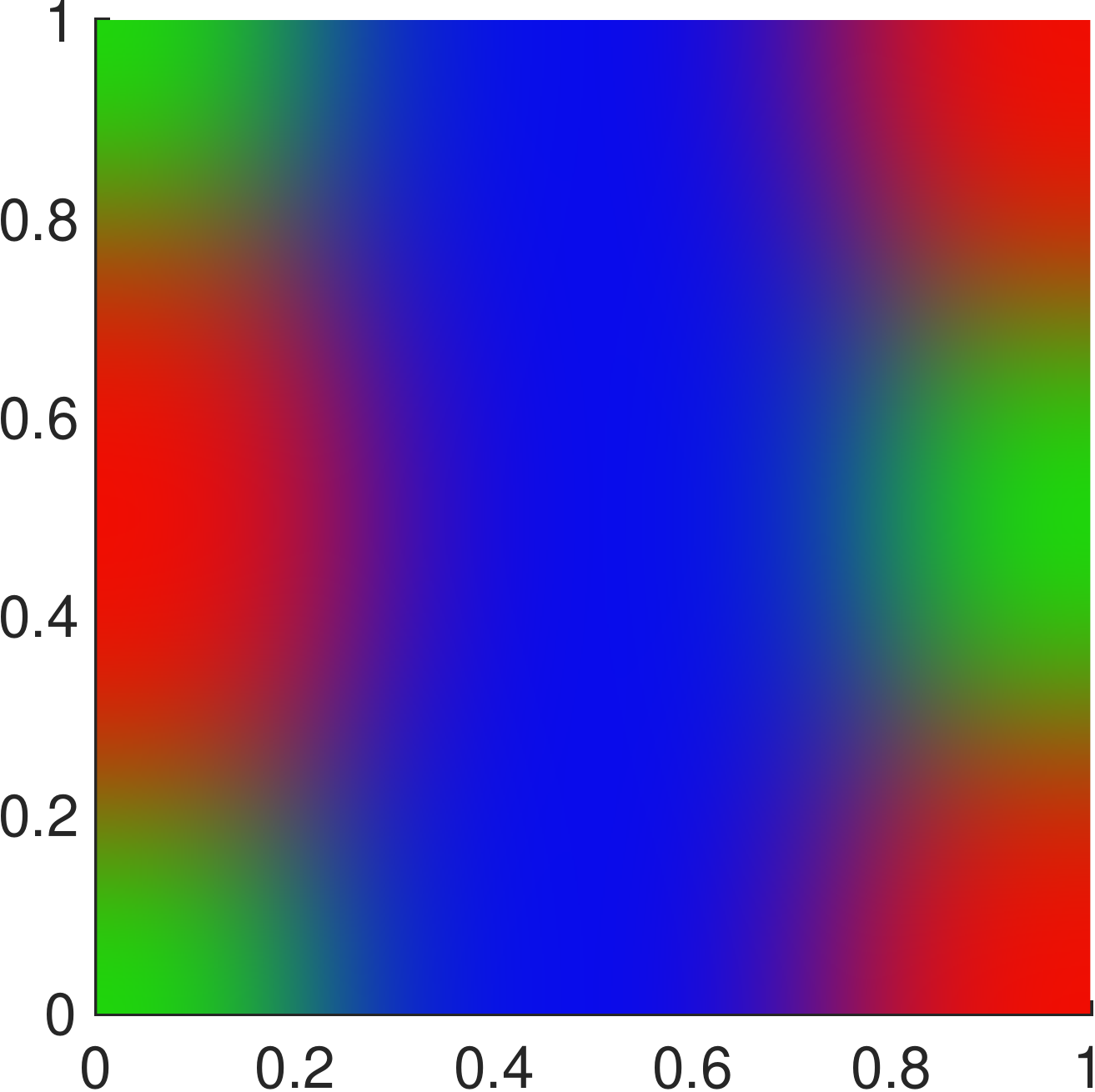}
  \caption{\label{fig:tbcp2dimages:a}
           Sample validated triblock copolymer equilibrium solutions on the
           two-dimensional domain $\Omega = (0,1)^2$ and for the parameters $\sigma=6$,
           $\mu = (0.3,0.2,0.5)$, as well as (left to right) (a)~$\lambda = 10$ and (b,c)~$\lambda=20$. 
           The validation parameters are listed in Table~\ref{table:2d}.
           }
  \end{center}
\end{figure}
\begin{figure}
  \begin{center}
  \includegraphics[width=0.32\textwidth]{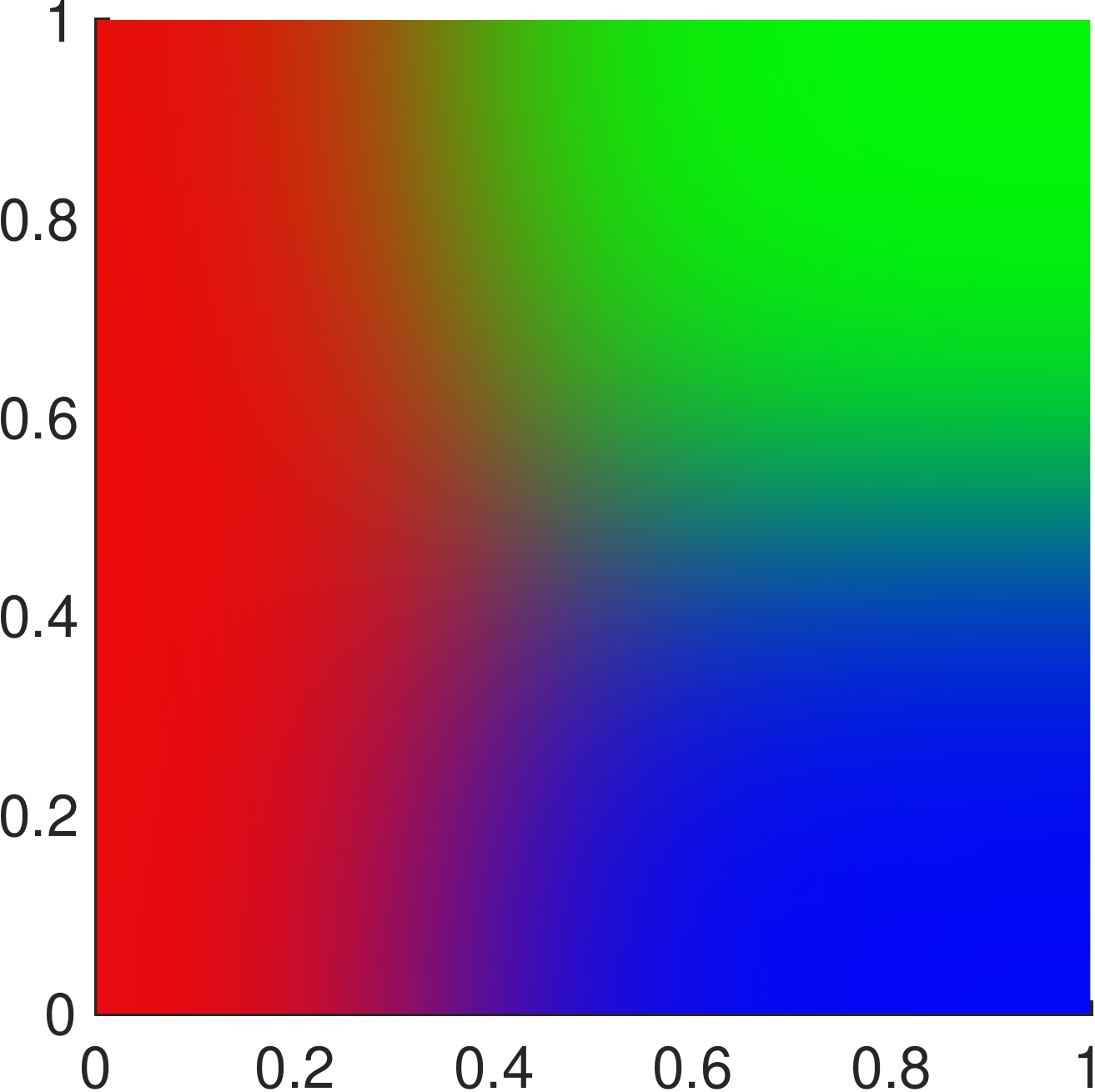}
  \includegraphics[width=0.32\textwidth]{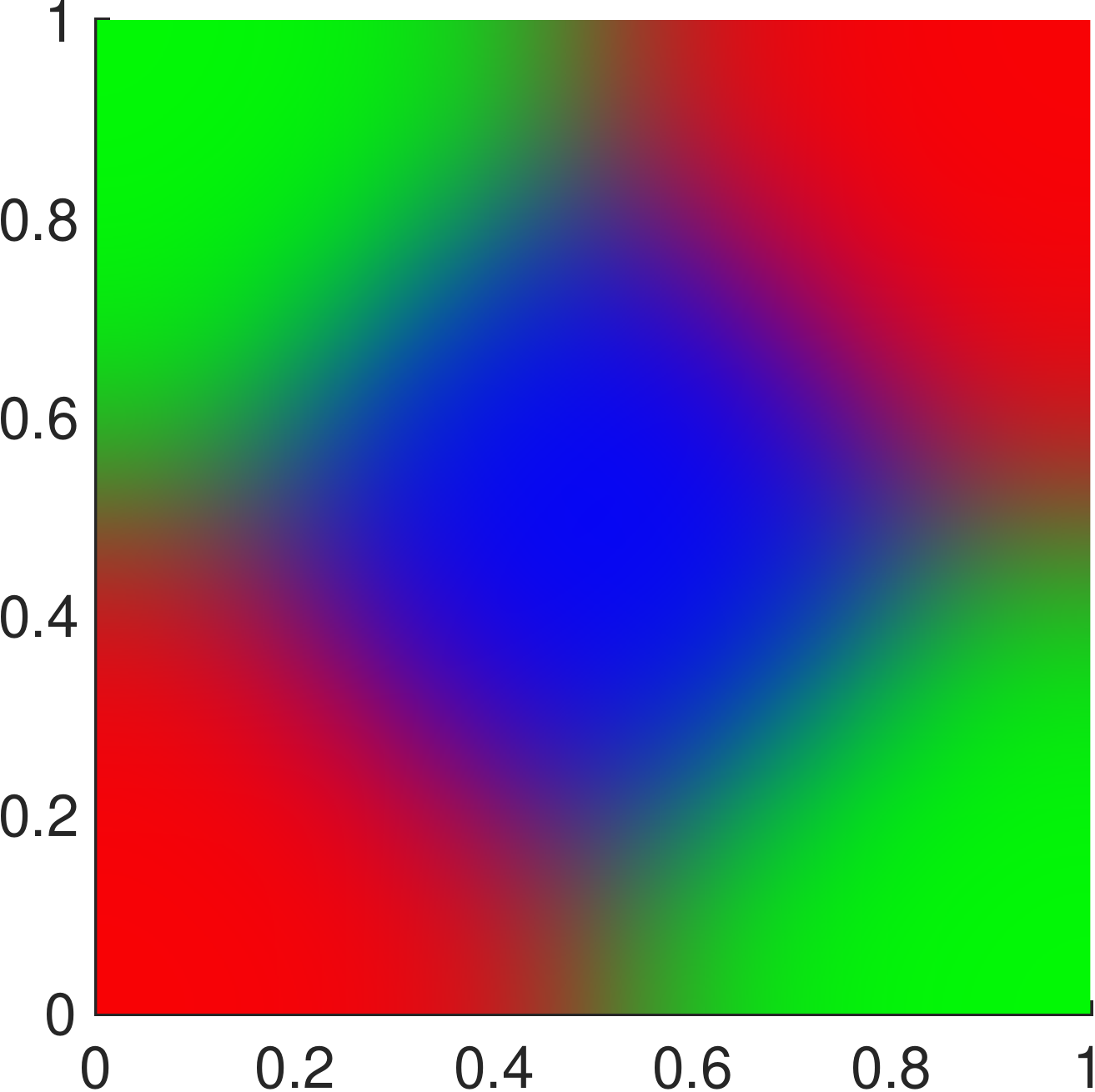} 
  \includegraphics[width=0.32\textwidth]{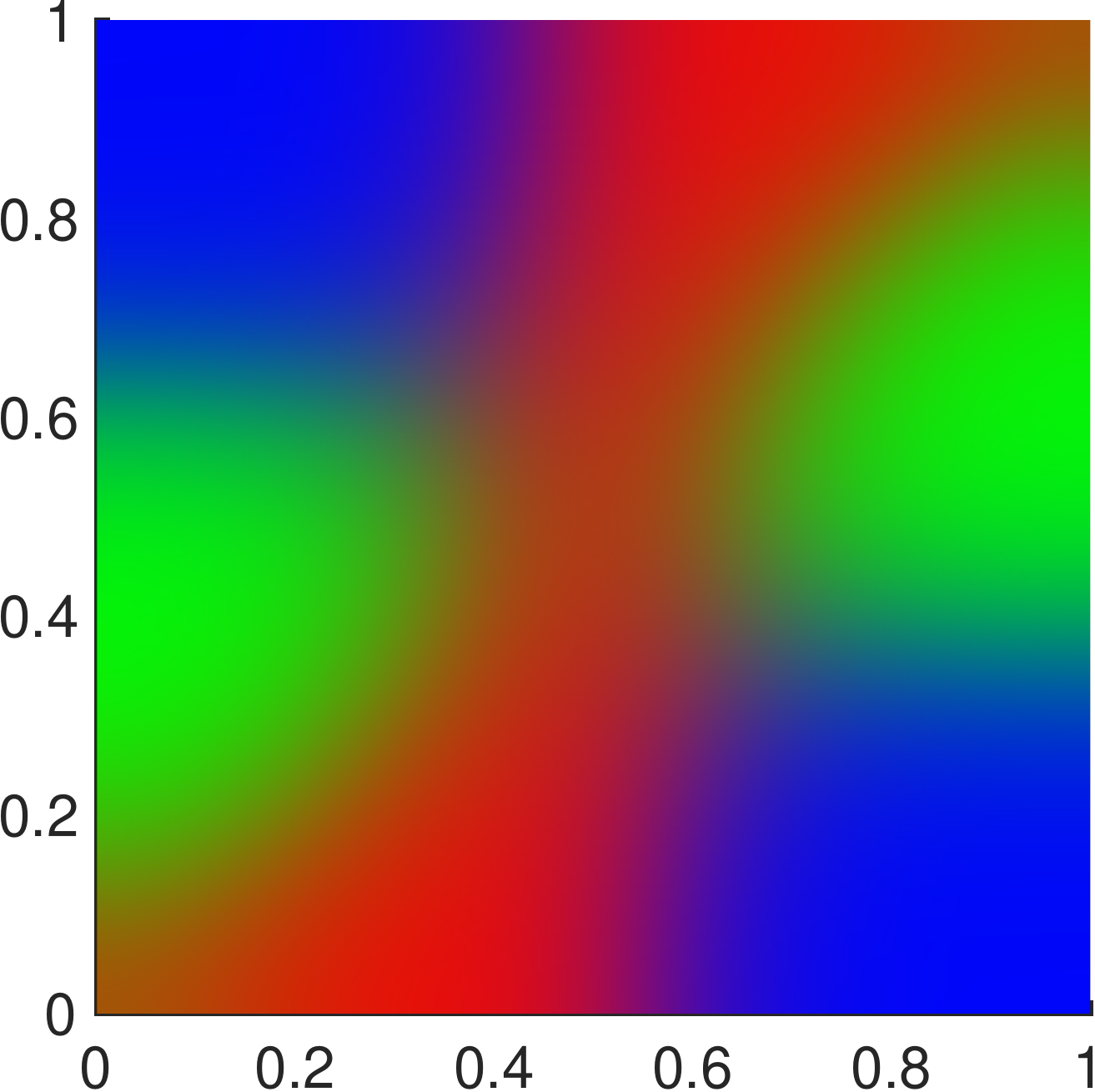}
  \caption{\label{fig:tbcp2dimages:b}
           Sample validated triblock copolymer equilibrium solutions on the
           two-dimensional domain $\Omega = (0,1)^2$, where $\sigma=6$,
           $\mu = (0.35,0.33,0.32)$, and (a)~$\lambda = 10$ and (b,c)~$\lambda=20$.  
           The validation parameters are listed in Table~\ref{table:2d}.
           }
  \end{center}
\end{figure}
\begin{figure}
  \begin{center}
  \includegraphics[width=0.32\textwidth]{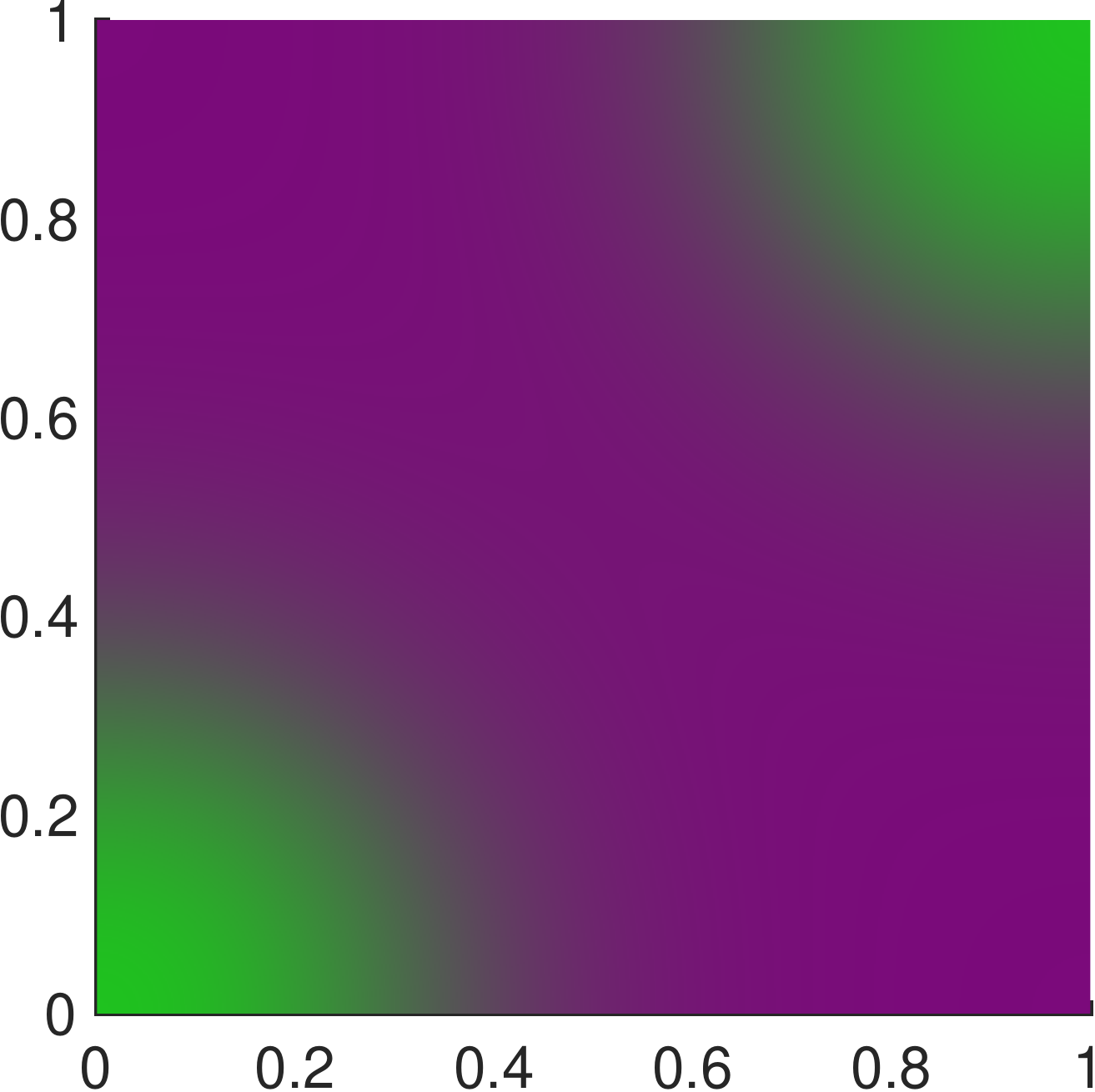}
  \includegraphics[width=0.32\textwidth]{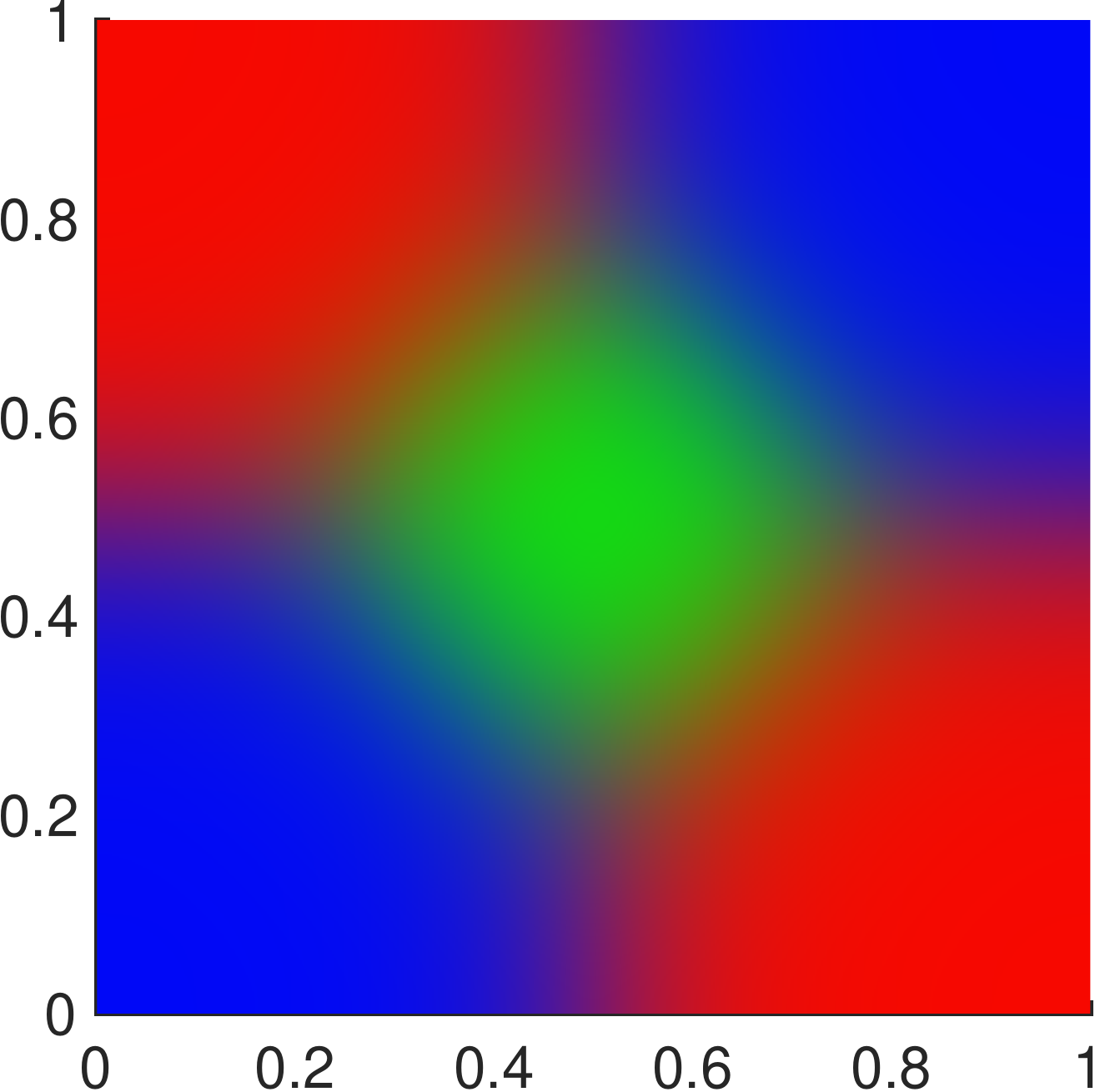} 
  \includegraphics[width=0.32\textwidth]{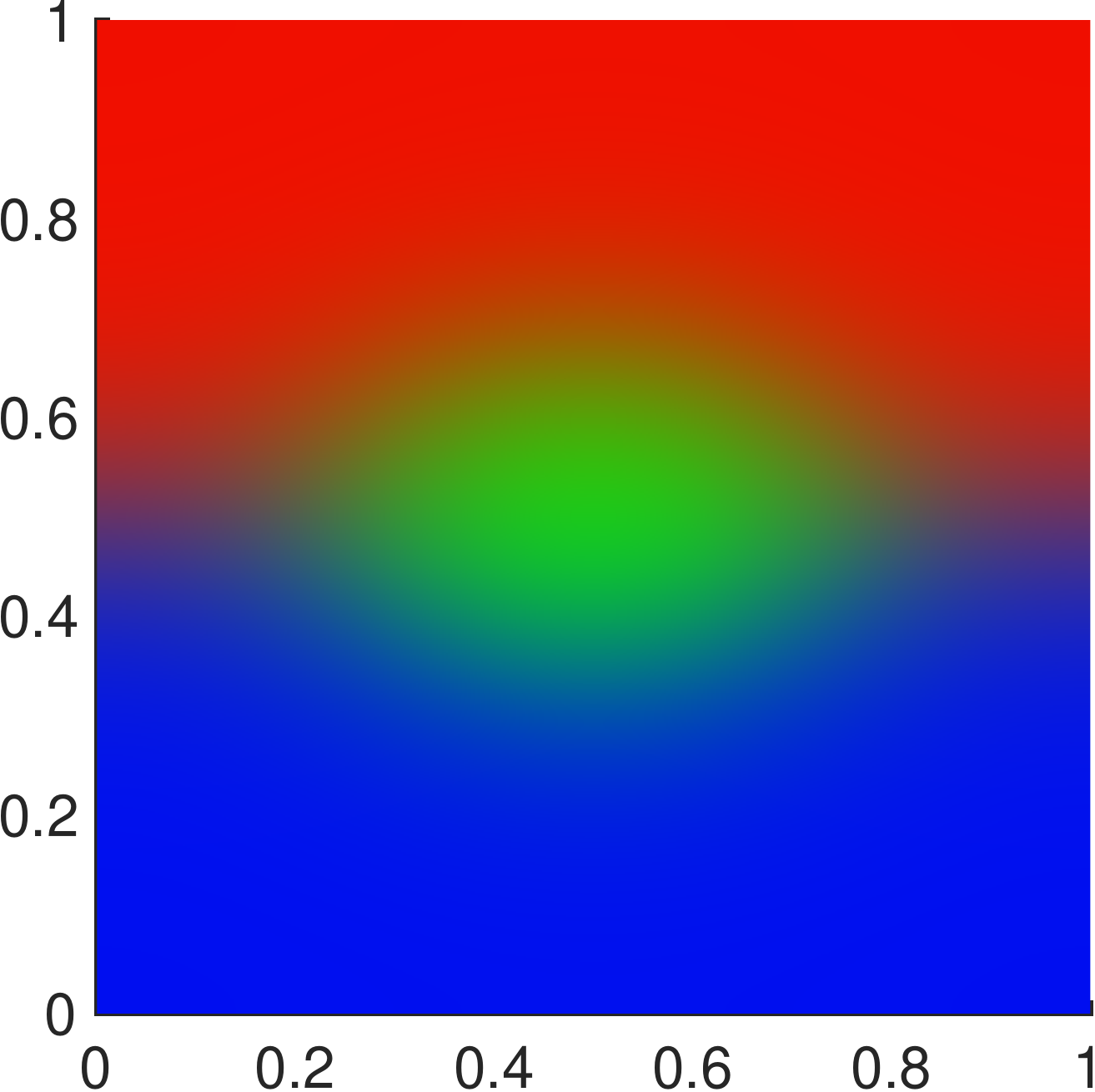}
  \caption{\label{fig:tbcp2dimages:c}
           Sample validated triblock copolymer equilibrium solutions on the
           two-dimensional domain $\Omega = (0,1)^2$, where $\sigma=6$,
           $\mu = (0.4,0.2,0.4)$, and (a)~$\lambda = 10$ and (b,c)~$\lambda=20$.  
           The validation parameters are listed in Table~\ref{table:2d}.}
  \end{center}
\end{figure}

We now turn to the case of the two-dimensional square domain $\Omega = (0,1)^2$.
Figures~\ref{fig:tbcp2dimages:a}--\ref{fig:tbcp2dimages:c}
depict equilibria for the triblock copolymer system for the three values of~$\mu$ in
the dark blue region of Figure~\ref{fig:stabregion}. Since all three components of~$\mu$
are significantly nonzero, these cases are quite different from the diblock copolymer
case. In order to depict these solutions, we have used the pointwise values of the three
components~$(u_1,u_2,u_3)$, each of which are basically values between~$0$ and~$1$, as the
RGB values of the resulting image. Therefore, a region which is primarily red corresponds
to a region primarily consisting of the first monomer, a green region mostly of the second
monomer, and a blue region consisting of the third monomer. In between the almost pure red,
green, and blue regions, there are narrow transition layers which usually appear as gray or
brown. These contain a mixture of multiple monomers. In fact 
mixed regions do not only occur in transition layers. For example, in 
Figure~\ref{fig:tbcp2dimages:c}a, there are two large regions of green and purple, 
where the purple region corresponds to a mixed monomer layer. 

Our primary focus in this paper is on establishing the framework necessary for rigorous
computation of equilibria, and thus we do not try to give an exhaustive set of connected branches
of stationary states. First of all, for each value of~$\mu$, there are an enormous number
of equilibrium solutions for each~$\lambda$ and~$\sigma$ value, as shown in Figure~\ref{fig:bifdiag}.
In this and all other bifurcation diagrams, we have fixed $\sigma  = 6$, but for nearby~$\sigma$
values, the complexity of the figure is consistently large. Therefore it is not realistically
tractable to create an exhaustive set of all equilibria. Second, in order to validate branches
of solutions, we would need to  combine the methods of this paper with the validated
pseudo-arclength continuation methods established in ~\cite{kamimoto:kim:sander:wanner:22a}.
This will also involve new development, since in the context of that paper, they were only
applied in a finite-dimensional case. The latter paper creates the first steps, via a more
flexible method of estimation. Rather than trying to do everything at once, a
systematic study combining these techniques will be the topic of a forthcoming paper.
\section{Functional-analytic framework and basic estimates}
\label{sec:defandsetup}
In this section, we describe the functional-analytic framework for
establishing stationary states of the triblock copolymer model using
the constructive implicit function theorem from Subsection~\ref{sec:capdetail}.
We will present the underlying spaces and norms, and recall necessary
auxiliary results and estimates. The results of the present section
reduce the equilibrium verification problem to the derivation of the
Fr\'echet derivative inverse operator norm bound, which is the central
result of this paper and will be established in the next section.

More precisely, we begin by discussing the necessary function spaces in
Subsection~\ref{sec:fun}, which form the foundation for our spectral
approach based on Fourier cosine series. The following
Subsection~\ref{sec:sob} recalls a number of rigorous Sobolev
embedding results that originated in~\cite{wanner:18b}, and results
which allow us to replace the standard Sobolev norms with more
computationally appropriate ones. Subsection~\ref{sec:proj} is
devoted to the required finite-dimensional approximation spaces
and associated projection operators, which are used in our
computer-assisted proofs. Finally, in Subsection~\ref{sec:lipschitz}
we derive the necessary Lipschitz estimates for the Fr\'echet derivatives
of the underlying nonlinear operator. Once that is accomplished, the
only missing piece of the puzzle is then the norm bound for the
inverse, and this is left for Section~\ref{sec:invnormbnd} of the paper.
\subsection{Fourier cosine series expansions and Sobolev spaces}
\label{sec:fun}
As mentioned in the last section, the functional-analytic backdrop
for our equilibrium validation are the spaces~$\bar{H}_n^2(\Omega)$
and~$\bar{H}_n^{-2}(\Omega)$ introduced in~(\ref{tbcp:h2dot}). These
spaces are considered on the unit cube $\Omega = (0,1)^d$ in dimension
$d = 1,2,3$, and they incorporate both the zero mass constraint and
the homogeneous Neumann boundary conditions. Important for our spectral
approach is the fact that Fourier cosine series forms a complete 
orthogonal set in both spaces. To describe this in more detail,
define the constants $c_0 = 1$ and $c_\ell = \sqrt{2}$ for~$\ell
\in \N$. Furthermore, we will make use of multi-indices $k \in \N_0^d$
of the form $k = (k_1,\dots,k_d)$ and let
\begin{displaymath}
  c_k = c_{k_1} \cdot \ldots \cdot c_{k_d} \; .
\end{displaymath}
If one then defines
\begin{equation} \label{eqn:phik}
  \phi_k(x) = c_k \prod_{i=1}^d \cos ( k_i \pi x_i )
  \qquad\mbox{ for all }\qquad
  x = (x_1,\ldots,x_d) \in \Omega \; , 
\end{equation}
then the family $\{ \phi_k \}_{k \in \N_0^d}$ is a complete orthonormal
basis for the space~$L^2(\Omega)$. Thus, any measurable and square-integrable
function $u : \Omega \to \R$ can be written in terms of its Fourier cosine
series
\begin{equation} \label{eqn:ucossum}
  u(x) = \sum_{k \in \N_0^d} \alpha_k \phi_k(x) \; ,
\end{equation}
where the real numbers $\alpha_k \in \R$ are the Fourier cosine coefficients
of~$u$, and we have
\begin{displaymath}
  \| u \|_{L^2} = \left( \sum_{k \in \N_0^d} \alpha_k^2 \right)^{1/2} \; ,
\end{displaymath}
where~$\| \cdot \|_{L^2}$ denotes the standard $L^2(\Omega)$-norm on the
above domain~$\Omega$. To simplify notation, we further introduce the
abbreviations
\begin{displaymath}
  |k| = \left( k_1^2 + \dots + k_d^2 \right)^{1/2}
  \qquad\mbox{ and }\qquad
  |k|_\infty = \max( k_1,  \dots , k_d )
\end{displaymath}
to distinguish between the Euclidean and maximum norms of multi-indices.

Recall that each function~$\phi_k$ is an eigenfunction of the negative
Laplacian subject to homogeneous Neumann boundary conditions. The corresponding
eigenvalue is given by~$\kappa_k$, and is defined via the equations
\begin{equation} \label{def:kappak}
  -\Delta \phi_k = \kappa_k \phi_k
  \qquad\mbox{ with }\qquad
  \kappa_k = \pi^2 \left(k_1^2 + k_2^2 + \dots + k_d^2\right) =
    \pi^2 |k|^2 \; .
\end{equation}
Notice also that every function~$\phi_k$ satisfies the identity
$\partial (\Delta \phi_k) /\partial \nu = -\kappa_k  \partial
\phi_k /\partial \nu = 0$, i.e., any finite Fourier cosine series
as above automatically satisfies both imposed boundary conditions
of the triblock copolymer equation~(\ref{intro:tbcpsys1}).

It will be useful to think of our basic function spaces in terms
of the Fourier cosine series representation in~(\ref{eqn:ucossum}).
Thus, for $\ell \in \N$ we consider the space 
\begin{displaymath}
  \cH^\ell =
    \left\{ u = \sum_{k \in \N_0^d} \alpha_k \phi_k : \| u \|_{\cH^\ell}
    < \infty \right\}
  \qquad\mbox{ with }\qquad
  \|u\|_{\cH^\ell}^2 = \sum_{k \in \N_0^d} \left( 1 +
    \kappa_k^\ell \right) \alpha_k^2 \; ,
\end{displaymath}
where the latter identity is equivalent to
\begin{displaymath}
  \|u\|^2_{\cH^\ell} =
  \|u \|_{L^2}^2 + \left\| (-\Delta)^{\ell/2} u \right\| ^2_{L^2} \;,
\end{displaymath}
and the fractional Laplacian for odd~$\ell$ is defined using the
spectral definition. One can show that the above spaces are subspaces
of the standard Sobolev spaces~$H^k(\Omega) = W^{k,2}(\Omega)$, which
were discussed in~\cite{adams:fournier:03a}. In addition, notice that
for sufficiently large~$\ell$, their definition automatically incorporates
the boundary conditions of~(\ref{intro:tbcpsys1}). For example, we have
$\cH^0 = L^2(\Omega)$ and $\cH^1 = H^1(\Omega)$, as well as
\begin{displaymath}
  \cH^2 = \left\{ u \in H^2(\Omega): \frac{\partial u}{\partial \nu} = 0
    \right\}
  \quad\mbox{ and }\quad
  \cH^4 = \left\{ u \in H^4(\Omega): \frac{\partial u}{\partial \nu} =
    \frac{\partial \Delta u}{\partial \nu} = 0 \right\} \; ,
\end{displaymath}
where the boundary conditions in the last two equations are considered
in the sense of the trace operator. See~\cite{sander:wanner:21a} for
more details on these identities.

While the spaces~$\cH^\ell$ incorporate the boundary conditions
of~(\ref{intro:tbcpsys1}), in the last section we reformulated the triblock
copolymer model so that both solution components satisfy the integral
constraint $\int_\Omega u \; dx = 0$, since the case of nonzero mass
average has been absorbed into the placement of the parameters~$\mu_1$
and~$\mu_2$. This mass constraint can be incorporated by considering
suitable subspaces of~$\cH^\ell$. To this end, consider an arbitrary
integer~$\ell \in \Z$ and define the space
\begin{equation} \label{defhlbar}
  \overline{\cH}^\ell =
  \left\{ u = \sum_{k \in \N_0^d, \; |k|>0} \alpha_k \phi_k
    \; : \; \| u \|_{\overline{\cH}^\ell} < \infty \right\}
  \quad\mbox{ with }\quad
  \| u \|_{\overline{\cH}^\ell}^2 =
    \sum_{k \in \N_0^d, \; |k|>0} \kappa_k^\ell \alpha_k^2 \;.
\end{equation}
We would like to point out that in these reduced spaces, we
use a simpler norm than the one used in~$\cH^\ell$. For $\ell = 0$
this definition reduces to the subspace of~$L^2(\Omega)$ of all
functions with average zero, equipped with its standard norm, while
for $\ell > 0$ we have $\overline{\cH}^\ell \subset \cH^\ell$ and
the new norm is equivalent to the original norm on~$\cH^\ell$.
Moreover, note that in the case of a negative integer $\ell < 0$
the series in~(\ref{eqn:ucossum}) is interpreted formally, i.e.,
the element $u \in \overline{\cH}^\ell$ is identified with the
sequence of its Fourier cosine coefficients. One can verify
that in this case~$u$ acts as a bounded linear functional
on~$\overline{\cH}^{-\ell}$. In fact, for all~$\ell < 0$ the 
space~$\overline{\cH}^\ell$ can be considered as a subspace of
the negative exponent Sobolev space~$H^\ell(\Omega) =
W^{\ell,2}(\Omega)$, see again~\cite{adams:fournier:03a}.
Finally, for every $\ell \in \Z$ the space~$\overline{\cH}^\ell$
is a Hilbert space with inner product
\begin{equation} \label{def:hbarellnorm}
  (u,v)_{\overline{\cH}^\ell} =
  \sum_{k \in \N_0^d, \; |k|>0} \kappa_k^\ell \alpha_k \beta_k \;, 
\end{equation}
where
\begin{displaymath}
  u = \sum_{k \in \N_0^d, \; |k|>0} \alpha_k \phi_k
    \in \overline{\cH}^\ell
  \qquad\mbox{ and }\qquad
  v = \sum_{k \in \N_0^d, \; |k|>0} \beta_k \phi_k
    \in \overline{\cH}^\ell \;. 
\end{displaymath}
Being separable Hilbert spaces, the spaces~$\overline{\cH}^\ell$ do
have complete orthonormal sets. The most important one for us is the
one given via rescalings of the functions~$\phi_k$, which is identified
in the following lemma. Its straightforward proof is left to the reader.
\begin{lemma}[Complete orthonormal set in~$\overline{\cH}^\ell$]
\label{lem:orthoghell}
For every $\ell \in \Z$ a complete orthonormal set in the Hilbert
space~$\overline{\cH}^\ell$ is given by the family
$\left\{ \kappa_k^{-\ell/2} \phi_k(x) \right\}_{k \in \N_0^d,
\; |k|>0}$.
\end{lemma}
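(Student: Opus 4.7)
The plan is to verify the two defining properties of a complete orthonormal set directly from the definitions provided in~(\ref{defhlbar}) and~(\ref{def:hbarellnorm}). Denote the candidate basis functions by $\psi_k := \kappa_k^{-\ell/2} \phi_k$ for $k \in \N_0^d$ with $|k| > 0$. Since $\kappa_k > 0$ whenever $|k| > 0$, each $\psi_k$ is well defined, and its Fourier cosine coefficients are $\kappa_k^{-\ell/2} \delta_{jk}$ for $j \in \N_0^d$ with $|j| > 0$. In particular, the norm $\|\psi_k\|_{\overline{\cH}^\ell}^2 = \kappa_k^\ell \cdot \kappa_k^{-\ell} = 1$ is finite, so $\psi_k \in \overline{\cH}^\ell$.

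For orthonormality, I would substitute the coefficients of $\psi_k$ and $\psi_m$ into the inner product formula~(\ref{def:hbarellnorm}) to obtain
\begin{displaymath}
  (\psi_k, \psi_m)_{\overline{\cH}^\ell}
  \; = \; \sum_{j \in \N_0^d, \; |j|>0} \kappa_j^\ell
      \bigl( \kappa_k^{-\ell/2} \delta_{jk} \bigr)
      \bigl( \kappa_m^{-\ell/2} \delta_{jm} \bigr)
  \; = \; \delta_{km} \; ,
\end{displaymath}
where the last equality uses that the product of Kronecker deltas is nonzero only when $j = k = m$, in which case $\kappa_k^\ell \kappa_k^{-\ell/2} \kappa_k^{-\ell/2} = 1$.

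For completeness, I would use the standard Hilbert space criterion that an orthonormal family is complete if and only if Parseval's identity holds for every element. Given an arbitrary $u = \sum \alpha_k \phi_k \in \overline{\cH}^\ell$, one computes
\begin{displaymath}
  (u, \psi_k)_{\overline{\cH}^\ell}
  \; = \; \kappa_k^\ell \alpha_k \kappa_k^{-\ell/2}
  \; = \; \kappa_k^{\ell/2} \alpha_k \; ,
\end{displaymath}
and therefore
\begin{displaymath}
  \sum_{k \in \N_0^d, \; |k|>0} \bigl| (u, \psi_k)_{\overline{\cH}^\ell} \bigr|^2
  \; = \; \sum_{k \in \N_0^d, \; |k|>0} \kappa_k^\ell \alpha_k^2
  \; = \; \| u \|_{\overline{\cH}^\ell}^2 \; ,
\end{displaymath}
which is precisely Parseval's identity. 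This establishes completeness and finishes the proof.

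There is no genuine obstacle here; the lemma is a direct bookkeeping exercise that reconciles the $\kappa_k^\ell$-weighted inner product on $\overline{\cH}^\ell$ with the unweighted $\ell^2$-type inner product on the sequence of Fourier cosine coefficients. The rescaling by $\kappa_k^{-\ell/2}$ is exactly what is needed to absorb the weights in~(\ref{def:hbarellnorm}) and reduce the statement to the fact that $\{\phi_k\}$ is a complete orthonormal set in $L^2(\Omega)$, restricted to the zero-mean subspace.
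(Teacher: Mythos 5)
Your proof is correct; the paper explicitly leaves this lemma's proof to the reader as a straightforward exercise, and your direct verification (orthonormality from the weighted inner product, completeness via Parseval's identity reducing to the completeness of $\{\phi_k\}$ in the zero-mean subspace of $L^2(\Omega)$) is exactly the intended bookkeeping argument. No gaps.
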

The above spaces are the foundation for our functional-analytic
setting. Notice that using these spaces, we can equivalently
reformulate the equilibrium system~(\ref{tbcp:equilsysW}), as
written in~(\ref{tbcp:zeroeqn}) and~(\ref{tbcp:defcf}), using
the space notation in~(\ref{tbcp:defcxcy}). It is clear from
our above discussion that we have
\begin{equation} \label{tbcp:defcxcy2a}
  \cX = \overline{\cH}^2 \times \overline{\cH}^2
  \qquad\mbox{ and }\qquad
  \cY = \overline{\cH}^{-2} \times \overline{\cH}^{-2} \; ,
\end{equation}
where on these product spaces we use the norms
\begin{equation} \label{tbcp:defcxcy2b}
  \left\| (w_1,w_2) \right\|_{\cX}^2 =
    \| w_1 \|_{\overline{\cH}^2}^2 +
    \| w_2 \|_{\overline{\cH}^2}^2
  \quad\mbox{ and }\quad
  \left\| (w_1,w_2) \right\|_{\cY}^2 =
    \| w_1 \|_{\overline{\cH}^{-2}}^2 +
    \| w_2 \|_{\overline{\cH}^{-2}}^2 \; . 
\end{equation}
Notice that the nonlinear problem $\cF(\lambda,w) = 0$ is now formulated
weakly, and in particular, the second boundary condition $\partial (\Delta w_i)
/ \partial \nu = 0$ for $i = 1,2$ is no longer explicitly stated in this weak
formulation. Note, however, that the first boundary conditions $ \partial w_i /
\partial \nu = 0$ have been incorporated into the space~$\cX$. Furthermore,
the fact that the functions~$f_1$ and~$f_2$ in~(\ref{tbcp:defcf}) are both
of class~$C^2$ is sufficient to guarantee that the function~$\cF : \R \times
\cX \to \cY$ is well-defined and Fr\'echet differentiable, since we only
consider domains up to dimension three.
\subsection{Constructive Sobolev embeddings and norm bounds}
\label{sec:sob}
We now turn our attention to a number of auxiliary results which relate
the norms of the spaces from the last subsections to each other, as well
as to other norms. Needless to say, all of these results need to be
explicit with concrete bounds, since they will be used in a constructive
computer-assisted proof setting. We begin by recalling two classical
results concerning Sobolev spaces --- namely the Sobolev embedding theorem
and the Banach algebra estimate in the Sobolev space of order two. These
results relate the norms on the function spaces~$\overline{\cH}^2$
and~$\cH^2$ to each other, as well as to the classical infinity norm.
As a side result, we obtain that all functions in~$\cH^2$ are in fact
continuous functions on~$\overline{\Omega}$, and that~$\cH^2$ is closed
under multiplication. These results are essential for the results of the
next section.
%
%
\begin{lemma}[Sobolev embeddings and Banach algebra estimates]
\label{lem:sobolev}
Consider the Hilbert spaces~$\cH^2$ and~$\overline{\cH}^2$ from
the last subsection, which are defined over the unit cube
$\Omega = (0,1)^d$ for dimensions $d = 1,2,3$. Then the
following statements hold:
\begin{itemize}
\item[(a)] {\bf Sobolev embedding:} For all $u \in \cH^2$ and arbitrary
$\bar{u} \in \overline{\cH}^2$ the estimates
\begin{displaymath}
  \| u \|_\infty \; \le \; C_m \; \| u \|_{\cH^2}
  \qquad\mbox{ and }\qquad
  \| \bar{u} \|_\infty \; \le \; \overline{C}_m \;
    \| \bar{u} \|_{\overline{\cH}^2}
\end{displaymath}
are satisfied, where the constants~$C_m$ and~$\overline{C}_m$ can be
found in Table~\ref{table1}, and~$\| \cdot \|_\infty$ denotes the
supremum norm in~$L^\infty(\Omega)$. In particular, these estimates
show that every function in~$\cH^2$ is almost everywhere equal to a
continuous function on~$\overline{\Omega}$.
\item[(b)] {\bf Banach algebra estimate:} For all $u,v \in \cH^2$
we have
\begin{displaymath}
  \| u v \|_{\cH^2} \; \le \;
  C_b \; \| u \|_{\cH^2} \| v \|_{\cH^2} \; ,
\end{displaymath}
where the constant~$C_b$ can be found in Table~\ref{table1}.
In other words, the Sobolev space~$\cH^2$ is closed under
multiplication.
\item[(c)] {\bf Explicit norm equivalence:} For all $\bar{u}
\in \overline{\cH}^2$ we have
\begin{displaymath}
  \| \bar{u} \|_{\overline{\cH}^2} \; \le \;
  \| \bar{u} \|_{\cH^2} \; \le \;
  C_e \| \bar{u} \|_{\overline{\cH}^2}
  \qquad\mbox{ with }\qquad
  C_e = \frac{\sqrt{1 + \pi^4}}{\pi^2} \; . 
\end{displaymath}
\end{itemize}
\end{lemma}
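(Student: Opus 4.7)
The plan is to reduce all three statements to estimates at the level of Fourier cosine coefficients from Subsection~\ref{sec:fun}. Every function is written as $u = \sum_{k \in \N_0^d} \alpha_k \phi_k$, and I exploit the pointwise bound $|\phi_k(x)| \le c_k$ together with the eigenvalue identity $-\Delta \phi_k = \kappa_k \phi_k$. The three constants $C_m$, $\overline{C}_m$, and $C_b$ will emerge as explicit convergent series over $\N_0^d$ whose numerical values populate Table~\ref{table1}.

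For part~(a), starting from $|u(x)| \le \sum_{k \in \N_0^d} |\alpha_k|\,c_k$ and inserting the weight $(1+\kappa_k^2)^{1/2}$, a Cauchy--Schwarz application yields
\begin{equation*}
  |u(x)|^2 \;\le\; \Bigl(\sum_{k \in \N_0^d} \frac{c_k^2}{1+\kappa_k^2}\Bigr)\,\|u\|_{\cH^2}^2,
\end{equation*}
so that $C_m^2$ equals the first factor. The analogous estimate on $\overline{\cH}^2$, using the weight $\kappa_k$ and summing only over $|k|>0$, gives $\overline{C}_m^2 = \sum_{|k|>0} c_k^2/\kappa_k^2$. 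Both series converge precisely because $d \le 3$, by comparison with $\int_{\R^d}(1+|x|^4)^{-1}\,dx$. Uniform and absolute convergence of the cosine series then delivers the continuous representative of $u$ on $\overline{\Omega}$.

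For part~(b), I would combine part~(a) with the equivalent-norm identity $\|w\|_{\cH^2}^2 = \|w\|_{L^2}^2 + \|\Delta w\|_{L^2}^2$ noted right after the definition of $\cH^\ell$. Expanding $\Delta(uv) = u\,\Delta v + 2\,\nabla u\cdot\nabla v + v\,\Delta u$, I would bound each summand in $L^2$ via H\"older's inequality, pairing $L^\infty$ with $L^2$ in the outer terms and $L^4$ with $L^4$ in the gradient term, and then dominate every factor by $\|u\|_{\cH^2}$ or $\|v\|_{\cH^2}$ through the same Fourier coefficient Cauchy--Schwarz mechanism as in~(a), this time applied to the eigenvalue weights attached to $\nabla u$ in $H^1$ and $W^{1,4}$. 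Collecting the resulting constants produces $C_b$ in the same form as a finite combination of convergent coefficient series.

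Part~(c) is a direct coefficient comparison. For $\bar u = \sum_{|k|>0}\alpha_k\phi_k$ the lower bound is immediate from $\kappa_k^2 \le 1 + \kappa_k^2$. For the upper bound, the inequality $\kappa_k \ge \pi^2$ on the support of the series yields $1 + \kappa_k^2 \le (1+\pi^{-4})\,\kappa_k^2$; taking square roots gives $C_e = \sqrt{1+\pi^4}/\pi^2$, with sharpness attained in the limit of the lowest nonzero mode. The main obstacle is not the analytic derivation of the constants but their rigorous numerical evaluation for $d = 1,2,3$: each of $C_m$, $\overline{C}_m$, and $C_b$ is a multidimensional infinite series, and a guaranteed upper bound is needed inside the computer-assisted proof. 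The plan there is to split each series into a large finite head, enclosed with interval arithmetic, and a tail controlled by monotone integral comparison against an explicit antiderivative, in line with the strategy already used in~\cite{sander:wanner:21a, wanner:18a} on which this paper builds.
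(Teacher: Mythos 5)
The paper does not actually prove this lemma: the first inequality in~(a) and part~(b) are quoted from~\cite{wanner:18b}, and the remaining statements from~\cite{sander:wanner:21a}, with the constants in Table~\ref{table1} obtained there by computer-assisted means. Your reconstruction is nonetheless essentially the right one and, for parts~(a) and~(c), it is complete and reproduces the stated constants exactly: Cauchy--Schwarz against the weights $1+\kappa_k^2$ and $\kappa_k^2$ gives $C_m^2 = \sum_{k}c_k^2/(1+\kappa_k^2)$ and $\overline{C}_m^2=\sum_{|k|>0}c_k^2/\kappa_k^2$ (for $d=1$ the latter is $2\zeta(4)/\pi^4=1/45$, whose square root matches the tabulated $0.149072$), and the coefficientwise comparison $\kappa_k^2\le 1+\kappa_k^2\le(1+\pi^{-4})\kappa_k^2$ for $|k|>0$ gives~(c) verbatim.

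The one place where your plan is imprecise is the gradient term in part~(b). After the Leibniz expansion $\Delta(uv)=u\Delta v+2\nabla u\cdot\nabla v+v\Delta u$, the outer terms are indeed handled by the $L^\infty$ bound from~(a), but the middle term requires $\|\nabla u\|_{L^4}\lesssim\|u\|_{\cH^2}$, and this does \emph{not} follow from ``the same Fourier coefficient Cauchy--Schwarz mechanism as in~(a)'': that mechanism yields $L^\infty$ control only when the dual weight series converges, and since $\nabla u$ lies merely in $H^1$, which does not embed into $L^\infty$ for $d\ge 2$, the corresponding series diverges. An explicit $H^1\hookrightarrow L^4$ constant must be obtained by a genuinely different argument (e.g.\ an interpolation inequality with computable constants, or a discrete Young-type convolution estimate on the cosine coefficients of $w^2$), which is precisely the technical content of~\cite{wanner:18b} that produces the specific value $C_b$ in Table~\ref{table1}. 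With that ingredient supplied, your outline closes; without it, part~(b) yields only the qualitative Banach algebra property, not the tabulated constant.
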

The proofs for the first inequality in~{\em (a)\/} and the inequality
in~{\em (b)\/} can be found in~\cite{wanner:18b}. The remaining statements
were established in~\cite{sander:wanner:21a}. Many of these estimates 
were themselves obtained via computer-assisted proofs, see again the
mentioned references.
\begin{table}
\begin{center}
    \begin{tabular}{|c||c|c|c|}
    \hline
    Dimension $d$ & $1$ & $2$ & $3$ \\ \hline\hline
    Sobolev Embedding Constant $C_m$ & $1.010947$ &
      $1.030255$ & $1.081202$  \\ \hline
    Sobolev Embedding Constant $\overline{C}_m$ &
      $0.149072$ & $0.248740$ & $0.411972$ \\ \hline
    Banach Algebra Constant $C_b$ &
      $1.471443$ & $1.488231$ & $1.554916$ \\ \hline  
    \end{tabular}
    \vspace*{0.3cm}
    \caption{\label{table1} 
    The table contains the explicit values for the constants
    introduced in Lemma~\ref{lem:sobolev}, depending on the domain
    dimension~$d$. They were derived using rigorous computational
    techniques in~\cite{sander:wanner:21a,wanner:18b}.}
\end{center}
\end{table}

Our next and final result of this subsection discusses the
relation between the spaces~$\overline{\cH}^\ell$ for varying
values of the differentiation order~$\ell$, i.e., we discuss
the so-called scale of these spaces. More precisely, it shows that,
on the one hand, due to our norm choices the Laplacian acts as an
isometry between spaces of appropriate differentiation orders. On
the other hand, it provides explicit embedding constants from spaces
with larger differentiation order to ones with smaller order. The proof
of the following lemma can be found in~\cite{sander:wanner:21a}.
\begin{lemma}[Sobolev scale properties]
\label{lem:sobolevscale}
Consider the Hilbert spaces~$\cH^\ell$ for $\ell \in \Z$ from 
the last subsection, which are defined over the unit cube
$\Omega = (0,1)^d$ for $d = 1,2,3$. Then the following
statements hold:
\begin{itemize}
\item[(a)] {\bf Laplacian isometry:} For every integer $\ell \in \Z$ the
Laplacian operator~$\Delta$ is an isometry from~$\overline{\cH}^\ell$
to~$\overline{\cH}^{\ell-2}$, i.e., for all $u \in \overline{\cH}^\ell$
the identities
\begin{displaymath}
  \| \Delta^{-1} u \|_{\overline{\cH}^{\ell+2}} \; = \;
  \| u \|_{\overline{\cH}^\ell} \; = \;
  \| \Delta u \|_{\overline{\cH}^{\ell-2}}
\end{displaymath}
are satisfied.
\item[(b)] {\bf Scale embeddings:} For all $u \in \overline{\cH}^m$ and
all $\ell \le m$ we have the estimate
\begin{displaymath}
  \| u \|_{\overline{\cH}^\ell} \; \le \;
  \frac{1}{\pi^{m-\ell}} \, \| u \|_{\overline{\cH}^m} \; . 
\end{displaymath}
Furthermore, note that in the special case $\ell = 0 \le m$ we have
$\| u \|_{\overline{\cH}^0} = \| u \|_{L^2}$.
\end{itemize}
\end{lemma}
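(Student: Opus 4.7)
The plan is to prove both parts directly from the Fourier cosine series definition of the norm on $\overline{\cH}^\ell$ given in~(\ref{defhlbar}), exploiting that each basis function $\phi_k$ is an eigenfunction of $-\Delta$ with eigenvalue $\kappa_k = \pi^2|k|^2$, as recorded in~(\ref{def:kappak}). Since both parts are purely diagonal in this Fourier basis, the entire argument reduces to manipulating nonnegative series term-by-term.

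For part~(a), I would take an arbitrary $u \in \overline{\cH}^\ell$, write it as $u = \sum_{|k|>0} \alpha_k \phi_k$, and observe that applying the Laplacian commutes with the summation (in the formal distributional sense when $\ell$ is negative, interpreting $u$ via its coefficient sequence as discussed in Subsection~\ref{sec:fun}), yielding the Fourier cosine coefficients $-\kappa_k \alpha_k$ for $\Delta u$. Plugging into the definition of the $\overline{\cH}^{\ell-2}$-norm gives
\begin{displaymath}
\| \Delta u \|_{\overline{\cH}^{\ell-2}}^2 = \sum_{|k|>0} \kappa_k^{\ell-2} (\kappa_k \alpha_k)^2 = \sum_{|k|>0} \kappa_k^{\ell} \alpha_k^2 = \| u \|_{\overline{\cH}^\ell}^2 \; .
\end{displaymath}
The same identity applied with $\ell$ replaced by $\ell+2$ shows that $\Delta^{-1}$, defined spectrally by dividing the $k$th coefficient by $-\kappa_k$, is likewise an isometry from $\overline{\cH}^\ell$ to $\overline{\cH}^{\ell+2}$, establishing the chain of equalities. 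The only subtlety here is confirming that the sum over $|k|>0$ is exactly the right summation range, which is guaranteed because $\overline{\cH}^\ell$ excludes the constant mode $k = 0$ so that $\kappa_k > 0$ throughout and $\Delta^{-1}$ is well-defined.

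For part~(b), I would exploit the elementary bound $\kappa_k = \pi^2 |k|^2 \ge \pi^2$ for every multi-index $k \in \N_0^d$ with $|k| > 0$, which implies $\kappa_k^{\ell-m} \le \pi^{2(\ell-m)}$ whenever $\ell \le m$. Multiplying the generic term $\kappa_k^m \alpha_k^2$ of $\| u \|_{\overline{\cH}^m}^2$ by $\kappa_k^{\ell-m}$ and summing produces
\begin{displaymath}
\| u \|_{\overline{\cH}^\ell}^2 = \sum_{|k|>0} \kappa_k^{\ell-m} \kappa_k^m \alpha_k^2 \le \pi^{2(\ell-m)} \sum_{|k|>0} \kappa_k^m \alpha_k^2 = \frac{1}{\pi^{2(m-\ell)}} \| u \|_{\overline{\cH}^m}^2 \; ,
\end{displaymath}
and taking square roots yields the claimed estimate. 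The special case $\ell = 0$ simply matches the definition $\| u \|_{\overline{\cH}^0}^2 = \sum \alpha_k^2 = \| u \|_{L^2}^2$ via Parseval's identity for the orthonormal system $\{\phi_k\}$.

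Honestly there is no real obstacle to this lemma: the whole proof is a one-line computation in each part once the Fourier cosine representation is in hand. The only point requiring a brief comment is the meaning of $\Delta$ and $\Delta^{-1}$ for negative $\ell$, where the relevant spaces consist of sequences of coefficients acting as linear functionals; but since the norms are defined purely in terms of those coefficients, the isometry identity persists at the level of sequences without needing any additional regularity argument.
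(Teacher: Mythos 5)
Your proof is correct and is the standard diagonal computation in the Fourier cosine basis; the paper itself defers this lemma to the reference \cite{sander:wanner:21a}, where the argument proceeds in essentially the same way from the definitions in~(\ref{defhlbar}) and~(\ref{def:kappak}). Both parts check out, including the key observations that $\kappa_k \ge \pi^2$ for $|k| > 0$ and that the exclusion of the constant mode makes $\Delta^{-1}$ well-defined.
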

\subsection{Spectral projection operators}
\label{sec:proj}
We now turn our attention to the finite-dimensional approximation
spaces that will be used in our computer-assisted existence proofs
for equilibrium solutions of~(\ref{intro:tbcpsys1}). These turn out
to simply be generated by truncated cosine series, and this is
briefly recalled in the present subsection via suitable
projection operators.

For this, let~$N \in \N$ denote a positive integer, and
consider $u \in \cH^\ell$ for $\ell \in \N_0$, or alternatively
$u \in \overline{\cH}^\ell$ for $\ell \in \Z$, of the form
$u = \sum_{k \in \N_0^d} \alpha_k \phi_k$, where in the latter
case $\alpha_0 = 0$. Then as in~\cite{sander:wanner:21a} we define
the projection
\begin{equation} \label{eqn:defpn}
  P_N u = \sum_{k \in \N_0^d, \; |k|_\infty< N} \alpha_k \phi_k \; . 
\end{equation}
In this definition we use the $\infty$-norm of the multi-index~$k$,
since this simplifies the implementation aspects of our method. The
so-defined operator~$P_N$ is a bounded linear operator on~$\cH^\ell$
with induced operator norm equal to~$1$, and it leaves the
space~$\overline{\cH}^\ell$ invariant if $\ell \in \Z$. Moreover,
one can easily show that for any $N \in \N$ we have
\begin{equation}
  \label{eqn:dimpn}
  \dim P_N {\cH}^\ell = N^d
  \qquad\mbox{ and }\qquad
  \dim P_N \overline{\cH}^\ell = N^d - 1 \;. 
\end{equation}
Notice also that for all $\ell \in \N_0$ the identity 
$(I - P_1) {\cH}^\ell = \overline{\cH}^\ell$ holds. Since this is
an especially useful operator, we introduce the abbreviation
\begin{equation}
  \overline{P} = I - P_1 \; . 
\end{equation}
It was shown in~\cite{sander:wanner:21a} that this operator~$\overline{P}$
satisfies the identity
\begin{equation} \label{eqn:pbarl2prod}
  \left( \overline{P} u, v \right)_{L^2} \; = \; (u,v)_{L^2}
  \qquad\mbox{ for all }\qquad
  u \in \cH^0
  \quad\mbox{ and }\quad
  v \in \overline{\cH}^0 \; .
\end{equation}
To close this subsection, we present a norm bound for the infinite Fourier
cosine series part that is discarded by the projection~$P_N$ in terms
of a higher-regularity norm. More precisely, we have the following
result, whose proof can again be found in~\cite{sander:wanner:21a}.
\begin{lemma}[Projection tail estimates]
\label{lem:projtailest}
Consider two integers $\ell \le m$ and let $u \in \overline{\cH}^m$
be arbitrary. Then the projection tail~$(I - P_N)u$ satisfies the
estimate
\begin{displaymath}
  \| (I - P_N) u \|_{\overline{\cH}^\ell} \; \le \;
  \frac{1}{\pi^{m - \ell} N^{m - \ell}} \,
    \| (I - P_N) u \|_{\overline{\cH}^m} \; \le \;
  \frac{1}{\pi^{m - \ell} N^{m - \ell}} \,
    \| u \|_{\overline{\cH}^m} \;. 
\end{displaymath}
\end{lemma}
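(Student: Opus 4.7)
The plan is to prove both inequalities by a direct computation in the Fourier cosine coefficients, exploiting the diagonal structure of the $\overline{\cH}^\ell$ norms in that basis. Writing $u = \sum_{k \in \N_0^d,\,|k|>0} \alpha_k \phi_k$, the projection operator defined in~\eqref{eqn:defpn} gives
\begin{displaymath}
  (I - P_N) u \; = \sum_{k \in \N_0^d,\; |k|_\infty \ge N} \alpha_k \phi_k \; ,
\end{displaymath}
and by the norm formula in~\eqref{defhlbar} together with $\kappa_k = \pi^2 |k|^2$ from~\eqref{def:kappak}, both sides of the first inequality reduce to weighted $\ell^2$-sums over the index set $\{k : |k|_\infty \ge N\}$.

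For the first inequality, I would rewrite $\kappa_k^\ell = \kappa_k^m / \kappa_k^{m-\ell}$ and use the key observation that any multi-index $k$ in the tail satisfies $|k| \ge |k|_\infty \ge N$, hence
\begin{displaymath}
  \kappa_k^{m-\ell} \; = \; \pi^{2(m-\ell)} |k|^{2(m-\ell)} \; \ge \;
  \pi^{2(m-\ell)} N^{2(m-\ell)} \; ,
\end{displaymath}
which is valid since $m - \ell \ge 0$. Pulling this uniform lower bound out of the sum gives
\begin{displaymath}
  \| (I - P_N) u \|_{\overline{\cH}^\ell}^2 \; = \sum_{|k|_\infty \ge N}
    \kappa_k^\ell \alpha_k^2 \; \le \; \frac{1}{\pi^{2(m-\ell)} N^{2(m-\ell)}}
    \sum_{|k|_\infty \ge N} \kappa_k^m \alpha_k^2 \; ,
\end{displaymath}
and the right-hand sum is exactly $\|(I - P_N) u\|_{\overline{\cH}^m}^2$. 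Taking square roots yields the first claimed estimate.

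The second inequality is then immediate: extending the summation from the tail index set $\{|k|_\infty \ge N\}$ to the full set $\{|k| > 0\}$ only adds nonnegative terms, so $\|(I - P_N) u\|_{\overline{\cH}^m} \le \|u\|_{\overline{\cH}^m}$. There is really no hard step here; the only care needed is to verify the sign condition $m - \ell \ge 0$ to justify raising $\kappa_k \ge \pi^2 N^2$ to the power $m - \ell$, and to handle the allowed case of negative $\ell$ or $m$ by reading~\eqref{defhlbar} formally on the level of Fourier coefficients as indicated in Subsection~\ref{sec:fun}. The result can also be viewed as a discrete, quantitative version of the scale embedding in Lemma~\ref{lem:sobolevscale}(b), restricted to the high-frequency tail where the improved factor $N^{-(m-\ell)}$ becomes available.
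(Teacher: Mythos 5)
Your proof is correct, and it is the standard direct computation on Fourier cosine coefficients that the paper defers to the cited reference \cite{sander:wanner:21a}: the key points (the lower bound $\kappa_k \ge \pi^2 N^2$ on the tail index set via $|k| \ge |k|_\infty \ge N$, the sign condition $m-\ell \ge 0$, and the monotonicity of the nonnegative sum for the second inequality) are all handled properly. Nothing further is needed.
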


\subsection{Lipschitz bounds for the Fr\'echet derivatives}
\label{sec:lipschitz}
To close this section, we now turn our attention to the Lipschitz
bounds which are required in hypotheses~(H3) and~(H4) of the
constructive implicit function theorem. The basic idea for their
derivation is the same as in~\cite{sander:wanner:21a}, and it
makes use of the explicit form of the Fr\'echet derivatives
of~$\cF$ with respect to~$w$ and~$\lambda$, combined with a
suitable version of the mean value theorem and our estimates
from Subsection~\ref{sec:sob}. In this way, we obtain the
following result.
\begin{lemma}[Lipschitz bounds for the Fr\'echet derivatives of~$\cF$]
\label{lem:lipschitzfrechet}
Consider the nonlinear triblock copolymer operator~$\cF : \R \times \cX \to \cY$
defined in~(\ref{tbcp:defcf}), between the spaces introduced in~(\ref{tbcp:defcxcy}).
Then both Hypotheses~(H3) and~(H4) are satisfied with the explicit constants
\begin{equation}\label{eqn:lipconstants}
  \begin{array}{cclcccl}
    L_1 &=& \DS \frac{2^{3/2} \overline{C}_m (|\lambda^*|+\ell_\lambda)
      f^{(2)}_{\max}}{\pi^2} \; , & \qquad\quad &
    L_2 &=& \DS \frac{f^{(1)}_{*}}{\pi^2} + \frac{\sigma}{\pi^4} \; , \\[2ex]
    L_3 &=& \DS \frac{2 f^{(1)}_{\max}}{\pi^{2}} + \frac{\sigma}{\pi^{4}}
      \; , \;\;\mbox{ and } & \qquad & 
    L_4 &=& 0 \; , 
  \end{array}
\end{equation}
where the values~$f^{(2)}_{\max}$, $f^{(1)}_{\max}$, and~$f^{(1)}_{*}$ are
defined in~(\ref{eqn:lipconstants2}) and(\ref{eqn:lipconstants3}) below,
and the value of~$\overline{C}_m$ can be found in Table~\ref{table1}.
\end{lemma}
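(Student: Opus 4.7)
The plan is first to compute $D_w \cF$ and $D_\lambda \cF$ directly from \eqref{tbcp:defcf}. Since $\cF$ is affine in $\lambda$, we obtain
\begin{displaymath}
  D_\lambda \cF(\lambda, w) \;=\; -\Delta f(\mu + w) - \sigma w
\end{displaymath}
and, for $i = 1,2$,
\begin{displaymath}
  \bigl(D_w \cF(\lambda, w)[h]\bigr)_i \;=\;
  -\Delta\Bigl(\Delta h_i + \lambda \sum_{j=1}^2 \partial_j f_i(\mu + w)\, h_j\Bigr)
  - \lambda \sigma h_i .
\end{displaymath}
Since $D_\lambda \cF(\lambda, w^*)$ does not in fact depend on $\lambda$, the choice $L_4 = 0$ is forced, and (H4) reduces to a single constant bound.

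For (H3) I would write the difference $D_w \cF(\lambda, w) - D_w \cF(\lambda^*, w^*)$, applied to a test direction $h \in \cX$, in the split form obtained by adding and subtracting the hybrid expression $\lambda\, \partial_j f_i(\mu + w^*) h_j$ inside the outer Laplacian. This produces a $w$-difference contribution
\begin{displaymath}
  -\Delta\Bigl(\lambda \sum_{j=1}^2 \bigl(\partial_j f_i(\mu + w) - \partial_j f_i(\mu + w^*)\bigr) h_j\Bigr)
\end{displaymath}
and a $\lambda$-difference contribution $-\Delta\bigl((\lambda - \lambda^*) \sum_j \partial_j f_i(\mu + w^*) h_j\bigr) - (\lambda - \lambda^*) \sigma h_i$. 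The decisive reduction comes from Lemma~\ref{lem:sobolevscale}(a): $-\Delta$ is an isometry $\overline{\cH}^0 \to \overline{\cH}^{-2}$, so every $-\Delta g$ term satisfies $\|{-\Delta g}\|_{\overline{\cH}^{-2}} = \|\overline{P} g\|_{L^2} \le \|g\|_{L^2}$. The $w$-difference piece is then handled by applying the mean value theorem to $\partial_j f_i \colon \R^2 \to \R$ to produce a pointwise bound in terms of $|w - w^*|$ and the constant $f^{(2)}_{\max}$; the Sobolev embedding $\overline{\cH}^2 \hookrightarrow L^\infty$ of Lemma~\ref{lem:sobolev}(a) then converts this into a bound involving $\overline{C}_m \|w - w^*\|_\cX$, while Lemma~\ref{lem:sobolevscale}(b) turns $\|h_j\|_{L^2}$ into $\|h_j\|_{\overline{\cH}^2}/\pi^2$. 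Summing carefully over $i, j \in \{1,2\}$, using $|\lambda| \le |\lambda^*| + \ell_\lambda$ and the $\sqrt{2}$ factors arising when passing between componentwise sums and the Euclidean product norms on $\cX$ and $\cY$, yields $L_1$. The $\lambda$-difference piece is controlled analogously from sup-norm bounds on $\partial_j f_i(\mu + w^*)$, contributing $f^{(1)}_*/\pi^2$ to $L_2$, while the lower-order term $-(\lambda - \lambda^*) \sigma h_i$ is estimated by two consecutive applications of Lemma~\ref{lem:sobolevscale}(b), producing the $\sigma/\pi^4$ summand.

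For (H4), $\|D_\lambda \cF(\lambda, w^*)\|_\cY$ splits analogously into a $-\Delta f(\mu + w^*)$ term and a $-\sigma w^*$ term. The first, after observing that $\overline{P} f_i(\mu + w^*) = \overline{P}(f_i(\mu + w^*) - f_i(\mu))$ because constants are annihilated by $\overline{P}$, reduces by mean value to a bound in terms of $f^{(1)}_{\max}$ and a Sobolev norm of $w^*$ that will be absorbed into the definition of $f^{(1)}_{\max}$; the second yields $\sigma/\pi^4$ again via Lemma~\ref{lem:sobolevscale}(b). The main obstacle is not analytical depth but bookkeeping: tracking where each factor of $\pi^{-1}$, each Sobolev constant $\overline{C}_m$, and each $\sqrt{2}$ from the product-space norms on $\cX$ and $\cY$ enters, and choosing the precise definitions of $f^{(2)}_{\max}$, $f^{(1)}_{\max}$, and $f^{(1)}_*$ in \eqref{eqn:lipconstants2}--\eqref{eqn:lipconstants3} so that all the summations over $i, j \in \{1,2\}$ and the uniformity in $\lambda$ over the range $|\lambda - \lambda^*| \le \ell_\lambda$ are absorbed cleanly into the stated formulas.
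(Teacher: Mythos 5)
Your treatment of (H3) is essentially the paper's argument: the same derivative formulas, the same splitting of $\lambda Df(\mu+w)-\lambda^*Df(\mu+w^*)$ into a $w$-difference piece (with coefficient bounded by $|\lambda^*|+\ell_\lambda$) and a $\lambda$-difference piece, the same use of the Laplacian isometry applied to $\overline{P}g$ (the paper makes the identical remark that the components of $Df(\mu+w)\tilde w$ need not have zero mean), the mean value theorem on $\partial_j f_i$ with $f^{(2)}_{\max}$, the embedding constants $\overline{C}_m$ and $\pi^{-2}$, $\pi^{-4}$, and the factor $2^{3/2}$ from the vector-valued mean value estimate. Your split (adding and subtracting $\lambda\,Df(\mu+w^*)h$ rather than $\lambda^*Df(\mu+w)\tilde w$) is marginally cleaner but yields the same $L_1$ and $L_2$.

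The gap is in (H4). You propose to bound $\|D_\lambda\cF(\lambda,w^*)\|_{\cY}$ directly by writing $\overline{P}f(\mu+w^*)=\overline{P}\bigl(f(\mu+w^*)-f(\mu)\bigr)$ and applying the mean value theorem; this produces $\|f(\mu+w^*)-f(\mu)\|_{\cZ}\le 2f^{(1)}_{\max}\|w^*\|_{\cZ}$ and $\sigma\|w^*\|_{\cY}\le\sigma\pi^{-4}\|w^*\|_{\cX}$, i.e.\ the bound
\begin{displaymath}
  \left\|D_\lambda\cF(\lambda,w^*)\right\|_{\cY}\;\le\;
  \left(\frac{2f^{(1)}_{\max}}{\pi^{2}}+\frac{\sigma}{\pi^{4}}\right)\|w^*\|_{\cX}\;,
\end{displaymath}
which carries an extra factor of $\|w^*\|_{\cX}$ relative to the stated $L_3$. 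Your remark that this factor ``will be absorbed into the definition of $f^{(1)}_{\max}$'' is not available to you: the lemma fixes $f^{(1)}_{\max}$ by~(\ref{eqn:lipconstants2}) as a pure maximum of $|\partial f_i/\partial z_j|$ over the region $R$, with no norm of $w^*$ in it, so your route does not recover the stated constant (except when $\|w^*\|_{\cX}\le 1$). For comparison, the paper arrives at $L_3=2f^{(1)}_{\max}/\pi^{2}+\sigma/\pi^{4}$ by estimating the \emph{difference} $\|D_\lambda\cF(\lambda,w)-D_\lambda\cF(\lambda^*,w^*)\|_{\cY}\le L_3\|w-w^*\|_{\cX}$ via the mean value bound~(\ref{lem:lipschitzfrechet5}); that is a Lipschitz-in-$w$ statement rather than the literal norm bound demanded by~(H4), so the two routes genuinely prove different inequalities, and only the paper's produces the formula claimed in~(\ref{eqn:lipconstants}). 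To repair your version you must either attach the factor $\|w^*\|_{\cX}$ to $L_3$ explicitly or switch to the difference estimate the paper uses.
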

\begin{proof}
In the following, 
we recall that the~$\mu_i$ are constants representing the total mass of the $i$-th monomer
and that~$w_i$ are defined as $u_i - \mu_i$ (i.e., the zero-mass component of~$u_i$)
to reformulate~\eqref{tbcp:equilsys} as~\eqref{tbcp:equilsysW}.
For brevity, we use the abbreviation $\mu + w = (\mu_1 + w_1,
\mu_2 + w_2)$, and we denote the Jacobian matrix of~$f = (f_1,f_2)$
at a point~$z \in \R^2$ by~$Df(z) = (\nabla f_1(z), \nabla f_2(z))^t$.
Recall that the Fr{\'e}chet derivative of the nonlinear operator~$\cF$
is then explicitly given by
\begin{eqnarray*}
  D_w \cF(\lambda,w)[\tilde{w}] & = &
    -\Delta (\Delta \tilde{w} + \lambda Df (\mu + w) \tilde{w})
    - \lambda \sigma \tilde{w} \\[1ex]
  & = &  \left( -\Delta \left( \Delta \tilde{w}_1 + \lambda 
    \nabla f_1(\mu + w) \cdot \tilde{w} \right) -
    \lambda \sigma \tilde{w}_1 \; , \right. \\[0.7ex]
  & & \quad \left. -\Delta \left( \Delta \tilde{w}_2 + \lambda
    \nabla f_2(\mu + w) \cdot \tilde{w} \right) -
    \lambda \sigma \tilde{w}_2 \right) \; .  
\end{eqnarray*}
In the following proof, we will make frequent use of the results from
the last three subsections. For a pair of functions $w = (w_1,w_2)$
in either~$\cX$ or~$\cY$ defined in~\eqref{tbcp:defcxcy2a}, we use
the respective norms given in~\eqref{tbcp:defcxcy2b}. In addition,
and parallel to these definitions, we define the norms~$\|\cdot\|_\cZ$
by $\|w\|_{\cZ}^2 = \|w_1\|_{{\cH}^0}^2  + \|w_2\|_{{\cH}^0}^2$,
and the norm~$\|\cdot\|_\cI$ by $\| w \|_{\cI}^2 = \|w_1\|_\infty^2
+ \|w_2\|_{\infty}^2$. Finally, if~$w^*$ denotes the solution 
approximation from the constructive implicit function theorem, then
we define the set $R = \{z \in \R^2 \; : \; \| z \| \le \|w^*\|_\cI
+ \overline{C}_m \ell_w \}$, the constants
\begin{equation}\label{eqn:lipconstants2}
  f^{(1)}_{\max} \; = \;
    \max_{\substack{i,j=1,2 \\ z \in R}}
    \left| \frac{\partial f_i}{ \partial z_j} (z+\mu) \right|
  \quad\mbox{ and }\quad
  f^{(2)}_{\max} = \max_{\substack{i,j,k=1,2 \\ z \in R}}
    \left| \frac{\partial^2 f_i}{\partial z_k \partial z_j} (z+\mu)
    \right| \; ,
\end{equation}
as well as finally the constant
\begin{equation} \label{eqn:lipconstants3}
  f^{(1)}_{*} \; = \;
  \max_{i,j=1,2} \left \Vert
    \frac{\partial f_i}{ \partial z_j} (w^*+\mu)
    \right\Vert_\infty \; .
\end{equation}
Consider now any two scalar norms~$\|\cdot\|_{s_1},
\|\cdot\|_{s_2}$ and two vector norms~$\|\cdot\|_{S_1},
\|\cdot\|_{S_2}$ which are related by the identity
$\| (w_1,w_2) \|_{S_i}^2 = \|w_1\|_{s_i}^2 + \| w_2\|_{s_i}^2$.
Assume further that for every scalar function~$u$ such that
the~$s_i$ norms are well-defined one has the estimate  
$\|u\|_{s_1} \le C \|u\|_{s_2}$ for some constant~$C > 0$.
Then the corresponding estimate is satisfied with unchanged~$C$
also in the vector-valued case, i.e., one has $\|w\|_{S_1}
\le C \|w\|_{S_2}$. Therefore, we can use the norm bounds
relating the spaces~$\overline{\cH}^{-2}$, $\overline{\cH}^2$,
$\overline{\cH}^0$, and~$C(\overline{\Omega})$ given in
Lemmas~\ref{lem:sobolev} and~\ref{lem:sobolevscale} to
establish norm bounds relating the spaces~$\cX$, $\cY$,
$\cZ = {\cH}^0 \times {\cH}^0$, and~$\cI = C(\overline{\Omega})
\times C(\overline{\Omega})$, respectively. In particular,
Lemma~\ref{lem:sobolev}{\em (a)\/} and
Lemma~\ref{lem:sobolevscale}{\em (a),(b)\/} imply the four
statements
\begin{eqnarray}
  \|w\|_{\cI} \le \overline{C}_m \|w\|_{\cX}
  & \quad\mbox{ and }\quad &
  \| \Delta w \|_{\cY} = \| w \|_{\cZ} \; ,
    \quad\mbox{ as well as }
    \label{lem:lipschitzfrechet12} \\[1ex]
  \| w \|_{\cZ} \le \pi^{-2} \| w \|_{\cX}
  & \quad\mbox{ and }\quad &
  \| w \|_{\cY} \le \pi^{-4} \| w \|_{\cX} \; .
    \label{lem:lipschitzfrechet34}
\end{eqnarray}
In preparation for the verification of the actual Lipschitz
estimate of the theorem, we consider a smooth function
$h: \R^m \to \R^m$, and let~$Dh(z)$ denote the Jacobian
matrix of~$h$ at~$z \in \R^m$. Moreover, consider two
points $z,\hat{z} \in \R^m$, let $y \in \R^m$, and let~$\cD$
denote the line segment between~$z$ and~$\hat{z}$. Then the
mean value theorem applied to the $k$-th component of~$h$
yields
\begin{displaymath}
  \left| h_k(z)  - h_k(\hat{z}) \right| \le
  \max_{c \in \cD} \left\Vert \nabla h_k(c) \right\Vert_2
    \| z-\hat{z} \|_2 \le
  \sqrt{m} \max_{\substack{j=1,\ldots,m \\ c \in \cD}}
    \left\vert \frac{\partial h_k}{\partial z_j}(c)
    \right\vert \| z-\hat{z} \|_2 \; ,
\end{displaymath}
and thus
\begin{equation} \label{lem:lipschitzfrechet5}
  \| h(z) - h(\hat{z}) \|_2 \le
  m \max_{\substack{j,k=1,\ldots,m \\ c \in \cD}}
    \left\vert \frac{\partial h_k}{\partial z_j}(c)
    \right\vert  \| z-\hat{z} \|_2 \; .
\end{equation}
In addition, we have
\begin{displaymath}
  \left\| (Dh(z) - Dh(\hat{z})) y \right\|_2^2 =
  \sum_{k = 1}^m \left( (\nabla h_k(z)-\nabla h_k(\hat{z}))
  y \right)^2 \; .
\end{displaymath}
Notice that
\begin{displaymath}
  \left| (\nabla h_k(z)-\nabla h_k(\hat{z})) y \right| \le
  \left\| \nabla h_k(z)-\nabla h_k(\hat{z}) \right\|_2
  \| y \|_2 \; ,
\end{displaymath}
as well as
\begin{displaymath}
  \left\lvert \frac{\partial h_k}{\partial z_j}(z) -
    \frac{\partial h_k}{\partial z_j}(\hat{z}) \right\rvert \le
    \sqrt{m} \max_{\substack{i=1,\ldots,m \\ c \in \cD}}
      \left\lvert \frac{\partial^2 h_k}{\partial z_i
      \partial z_j}(c) \right\rvert \| z-\hat{z} \|_2 \; ,
\end{displaymath}
and therefore
\begin{displaymath}
  \|\nabla h_k(z)-\nabla h_k(\hat{z})\|_2 \le
  m \max_{\substack{i,j=1,\ldots,m \\ c \in \cD}}
    \left\lvert \frac{\partial^2 h_k}{\partial z_i
    \partial z_j}(c) \right\rvert \| z-\hat{z} \|_2
    \; .
\end{displaymath}
This finally implies
\begin{equation}  \label{lem:lipschitzfrechet6}
  \begin{array}{ccc}
    \DS \left\| (Dh(z) - Dh(\hat{z})) y \right\|_2
      & \le & \DS m \left( \sum_{k = 1}^m
      \max_{\substack{i,j=1,\ldots,m \\ c \in \cD}}
      \left\lvert \frac{\partial^2 h_k}{\partial
      z_i \partial z_j}(c) \right\rvert ^2 \right)^{1/2}
      \| z-\hat{z} \|_2 \| y \|_2 \\[5ex]
    & \le & \DS m^{3/2} \max_{\substack{i,j,k=1,\ldots,m \\
      c \in \cD}} \left\lvert \frac{\partial^2 h_k}{\partial
      z_i \partial z_j}(c) \right\rvert
      \| z-\hat{z} \|_2 \|y\|_2 \; . 
  \end{array}
\end{equation}
Note that the above computations are similar in spirit to the
ones found in~\cite{kamimoto:kim:sander:wanner:22a}. 

After these preparations, we finally turn our attention to
the Lipschitz estimates of the theorem. From the explicit form
of the Fr\'echet derivative~$D_w \cF$ one obtains
\begin{displaymath}
  \begin{array}{rcl}
    & & \hspace*{-2cm}
     \DS \left\| D_w\cF(\lambda,w) \tilde{w} -
     D_w \cF(\lambda^*,w^*)\tilde{w} \right\|_{\cY} \\[1.5ex]
   & = & \DS
     \left\| -\Delta \left( \lambda Df(\mu + w) \tilde{w} -
     \lambda^* Df(\mu + w^*) \tilde{w}  \right) -
     \left( \lambda - \lambda^* \right) \sigma \tilde{w}
     \right\|_{\cY} \\[1.5ex]
    & \le & \DS
      \left| \lambda-\lambda^* \right| \left( \left\|\Delta
      \left(  Df(\mu + w) \tilde{w} \right) \right\|_{\cY} +
      \sigma \| \tilde{w} \|_{\cY} \right) \\[1ex]
    & & \DS \quad \; + \;
      \left| \lambda^* \right| \| \Delta \left( \left(
      Df(\mu + w) - Df(\mu + w^*) \right) \tilde{w}
      \right) \|_{\cY} \; .
  \end{array}
\end{displaymath}
Then the second statement in~\eqref{lem:lipschitzfrechet12},
together with the observation that the components
of~$Df(\mu + w) \tilde{w}$ do not necessarily have total
mass~$0$, yields
\begin{eqnarray*}
  \left\Vert \Delta \left( Df (\mu + w)  \tilde{w} \right)
    \right\Vert_{\cY} & \le &
    \left\Vert Df(\mu + w) \tilde{w} \right\Vert_{\cZ} \\[1ex]
  & \le & \left\Vert (Df(\mu + w) - Df(\mu + w^*)) \tilde{w}
    \right\Vert_{\cZ} + \left\Vert Df(\mu + w^*) \tilde{w}
    \right\Vert_{\cZ} \; . 
\end{eqnarray*}
Let $\xi(x) \in \R^2$ be a point on the line segment between
the vectors~$w(x)$ and~$w^*(x)$. Then one can bound~$\|\xi\|_\cI$
via
\begin{displaymath}
  \|\xi\|_\cI  \; \le \;
  \|w^*\|_\cI + \| w - w^*\|_\cI \; \le \;
  \|w^*\|_\cI + \overline{C}_m \| w - w^*\|_\cX \; \le \;
  \|w^*\|_\cI + \overline{C}_m \ell_w \; . 
\end{displaymath}
We would like to point out that this last inequality implies
that for the region~$R$ used in the definitions of
both~$f^{(1)}_{\max}$ and~$f^{(2)}_{\max}$ one therefore
obtains~$\xi(x) \in R$. Together with~\eqref{lem:lipschitzfrechet12},
\eqref{lem:lipschitzfrechet34}, and~\eqref{lem:lipschitzfrechet6}
this furnishes
\begin{eqnarray*}
  \left\Vert (Df(w + \mu ) - Df(w^*+\mu)) \tilde{w}
    \right\Vert_{\cZ} & \le & 2^{3/2} f^{(2)}_{\max}
    \|w-w^*\|_\cI \|\tilde{w}\|_\cZ \\[2ex]
  & \le & \frac{2^{3/2} \overline{C}_m
    f^{(2)}_{\max}}{\pi^2} \; \|w-w^*\|_\cX
    \|\tilde{w}\|_\cX \; . 
\end{eqnarray*}
Additionally, we have
\begin{displaymath}
  \left\Vert Df(\mu + w^*) \tilde{w} \right\Vert_{\cZ} \; \le \;
  \max_{\substack{i,j=1,2 \\ x \in \Omega}} \left\Vert
    \frac{\partial f_i}{\partial z_j}(\mu + w^*(x))
    \right\Vert_\infty \|\tilde{w}\|_\cZ \; \le \;
 f^{(1)}_{*} \|\tilde{w}\|_\cZ \; \le \;
 \frac{f^{(1)}_{*}}{\pi^2} \; \|\tilde{w}\|_\cX \; .
\end{displaymath}
Combining the above statements along with the statements
in~\eqref{lem:lipschitzfrechet34}, we further see that
\begin{displaymath}
  \begin{array}{rcl}
    & & \hspace*{-2cm} \DS
      \| D_w \cF(\lambda,w) \tilde{w} -
      D_w \cF(\lambda^*,w^*) \tilde{w} \|_{\cY} \\[1.5ex]
    & \le & \DS
      \left( \frac{f^{(1)}_{*}}{\pi^2} + \frac{\sigma}{\pi^4}
      \right) |\lambda - \lambda^*| \left\| \tilde{w}
      \right\|_{\cX} \; + \;
      \left( \frac{2^{3/2}\overline{C}_m (|\lambda^*|+\ell_\lambda)
      f^{(2)}_{\max}}{\pi^2} \right) \left\| w - w^* \right\|_{\cX}
      \left\| \tilde{w} \right\|_{\cX} \; ,
  \end{array}
\end{displaymath}
which immediately establishes the values of~$L_1$ and~$L_2$
in hypothesis~(H3). As for the condition in~(H4), we recall that
\begin{displaymath}
  D_{\lambda} \cF(\lambda,w) \; = \;
  -\Delta(f(\mu+w))-\sigma w \; , 
\end{displaymath}
and using the estimate in~\eqref{lem:lipschitzfrechet5} one
further obtains
\begin{eqnarray*}
  \| D_{\lambda}\cF(\lambda,w) - D_{\lambda}\cF(\lambda^*,w^*)
    \|_{\cY} & \le &
    \left\| \Delta(f(\mu+w)-f(\mu+w^*)) \right\|_{\cY} +
    \sigma \| w-w^* \|_{\cY} \\[1.5ex]
  & \le & \left\| f(\mu+w)-f(\mu+w^*) \right\|_{\cZ} +
    \frac{\sigma}{\pi^4} \|w-w^*\|_{\cX} \\[1.5ex]
  & \le & \left( \frac{2 f^{(1)}_{\max}}{\pi^2} +
    \frac{\sigma}{\pi^4} \right) \|w-w^*\|_{\cX} \; .
\end{eqnarray*}
This finally establishes the values for~$L_3$ and~$L_4$,
and completes the proof of Lemma~\ref{lem:lipschitzfrechet}.
\end{proof}
%
%
%
%
\newcommand{\subspaceSymbol}{U}%
\newcommand{\rangeSubspaceSymbol}{V}%
\newcommand{\indexsetSymbol}{\mathcal{J}}%
\newcommand{\subspaceIndex}{i}%
\newcommand{\indexIndex}{k}%
\newcommand{\nscalars}{m}%
\newcommand{\nfunctions}{n}%
\newcommand{\htwo}{\cH^{2}}%
\newcommand{\htwobar}{\overline{\cH}^{2}}%
\newcommand{\hmtwobar}{\overline{\cH}^{-2}}%
\newcommand{\hzerobar}{\overline{\cH}^{0}}%
\newcommand{\basisSymbol}{\cB}%
\newcommand{\embeddingConstantToHtwobar}{{\red{C_{?}}}}%
\newcommand{\Ldomain}{\mathbf X}%
\newcommand{\LdomainElement}{\mathbf x}
\newcommand{\LdomainElementScalar}{\eta}
\newcommand{\LdomainElementFun}{v}
\newcommand{\Lrange}{\mathbf Y}%
\newcommand{\LrangeElement}{\mathbf y}
\newcommand{\suitableFunctionSpace}{\htwobar}%
\section{Inverse norm bound for fourth-order elliptic operators}
\label{sec:invnormbnd}
This section is devoted to establishing an inverse bound 
for the operator~$L$ defined in~\eqref{intro:linop1} and~\eqref{intro:linop2}.
This bound can be used in various applications to obtain
hypothesis~(H2), which is required for Theorem~\ref{nift:thm}, the
constructive implicit function theorem. More precisely, our goal in
the following is to derive a constant~$K$ such that 
\begin{displaymath}
  \left\| L^{-1} \right\|_{\cL(\Lrange,\Ldomain)} \le K \; ,
\end{displaymath}
i.e., we need to find a bound on the operator norm of the inverse
of the linear operator $ L $. We divide 
the derivation of this estimate into four parts. In
Section~\ref{sec:investoutline} we give an outline of our approach,
introduce necessary definitions and auxiliary results, and present
the main result of this section. This result will be verified in the
following three sections. First, we discuss the finite-dimensional
projection of~$L$ in Section~\ref{sec:findim}. Using this
finite-dimensional operator, we then construct an approximate
inverse in Section~\ref{sec:approxinverse}, before everything is
assembled to provide the desired estimate in the final
Section~\ref{sec:comprof}. In contrast to the discussion of
Section~\ref{sec:capdetail}, we use a formulation where the
main space~$\Ldomain$ is a product space of~$ \nscalars $
scalar constraints and~$ \nfunctions $ subspaces of~$ \htwobar $,
namely $ \Ldomain = \R^\nscalars \times \prod_{i=1}^\nfunctions U_i $.
As mentioned in the introduction, this is in preparation of future
applications of this theory, which go well beyond the triblock
copolymer model.
\subsection{General outline and auxiliary results}
\label{sec:investoutline}
For every $ \subspaceIndex = 1, \ldots, \nfunctions $, let 
$ \subspaceSymbol_\subspaceIndex \subset \htwobar $ 
be a closed subspace and let
$ \indexsetSymbol_\subspaceIndex $ 
denote an infinite index set consisting of multi-indices such
that $ \left\{ \phi_{\indexIndex} : \indexIndex \in \indexsetSymbol_\subspaceIndex \right\} $ 
forms a complete orthogonal set of $ U_\subspaceIndex $, where the considered
basis functions~$\phi_k$ were introduced in~\eqref{eqn:phik}.
We emphasize that $ \subspaceSymbol_\subspaceIndex $ is not necessarily all of $ \htwobar $,
but it is critical to have a complete orthogonal set for each 
$ \subspaceSymbol_\subspaceIndex $ which consists of a subset of the basis functions in \eqref{eqn:phik}.
We may now form a complete orthogonal set for $ \Ldomain = \R^\nscalars \times \prod_{\subspaceIndex=1}^{\nfunctions} \subspaceSymbol_\subspaceIndex $
by using the standard basis $ \left\{ e_j \right\}_{j=1}^{\nscalars} $ for $ \R^\nscalars $ 
and $ \left\{ \phi_{\indexIndex} : \indexIndex \in \indexsetSymbol_\subspaceIndex \right\} $
for every $ \subspaceIndex = 1,\ldots,\nfunctions $
as
\begin{equation}
    \basisSymbol_\indexsetSymbol = \left\{ e_j \times 0_{\left(\htwobar\right)^\nfunctions} \right\}_{j=1 \ldots \nscalars} \bigcup \; \left\{ 0_{\R^\nscalars \times \left(\htwobar\right)^{i-1}} \times \phi_{\indexIndex} \times 0_{\left(\htwobar\right)^{\nfunctions-i}} \right\}_{\subspaceIndex=1 \ldots \nfunctions,\,\indexIndex \in \indexsetSymbol_\subspaceIndex}
\end{equation}
where $ \indexsetSymbol = \left\{ \indexsetSymbol_1, \indexsetSymbol_2, \ldots, \indexsetSymbol_\nfunctions \right\} $.
For convenience of notation in the subsequent discussion, 
for every element $ \LdomainElement \in \Ldomain $
we abbreviate the operator defined in \eqref{intro:linop1} and \eqref{intro:linop2} by
$ L : \Ldomain \to \Lrange $ where 
\begin{equation}  \label{eqn:ldef}
  \Ldomain = \R^\nscalars \times \prod_{\subspaceIndex=1}^{\nfunctions} \subspaceSymbol_\subspaceIndex 
  \qquad\text{and}\qquad
  \Lrange = \R^\nscalars \times \prod_{\subspaceIndex=1}^{\nfunctions} \rangeSubspaceSymbol_\subspaceIndex
\end{equation}
with the assumption that $ \rangeSubspaceSymbol_\subspaceIndex \subset \hmtwobar$ also has the complete orthogonal set
$ \left\{ \phi_k : k \in \indexsetSymbol_\subspaceIndex \right\} $.
This is the most general form of the operator $ L $, and
standard results imply that~$L$ is a bounded linear
operator $L \in \cL(\Ldomain,\Lrange)$.

As mentioned earlier, the constructive implicit function theorem
crucially relies on being able to find a bound~$K$ such that
$\|L^{-1}\|_{\cL(\Lrange,\Ldomain)} \le K$. Our goal is to
accomplish this by using a finite-dimensional
approximation for~$L$, since that can be analyzed via rigorous
computational means. Our finite-dimensional approximation for~$L$
is given as follows. For fixed $N \in \N$ define the finite-dimensional
spaces
\begin{displaymath}
  \Ldomain_N = P_N \Ldomain
  \qquad\mbox{ and }\qquad
  \Lrange_N = P_N \Lrange \; ,
\end{displaymath}
where the projection operator given in~(\ref{eqn:defpn}) 
is applied componentwise on the functional components of $ \Ldomain,\Lrange $,
i.e., on each $ \subspaceSymbol_\subspaceIndex $ individually,
and acts as the identity on the scalar components.
We then define $L_N: \Ldomain_N \to \Lrange_N$ by 
\begin{equation} \label{eqn:defln}
  L_N = \left. P_N L \right|_{\Ldomain_N} \; .
\end{equation}
Let~$K_N$ be a bound on the inverse of the finite-dimensional
operator~$L_N$, i.e., suppose that we have established the
estimate
\begin{equation} \label{eqn:defkn}
  \left\| L_N^{-1} \right\|_{\cL(\Lrange_N,\Ldomain_N)} \le K_N \; ,
\end{equation}
where the spaces~$\Ldomain_N$ and~$\Lrange_N$ are equipped with the norms
of~$\Ldomain$ and~$\Lrange$, respectively. We will discuss further details
on appropriate coordinate systems and the actual computation
of both~$L_N$ and~$K_N$ in Section~\ref{sec:findim}. Nevertheless, after
these preparations we are able to state our main
result for this section.
\begin{theorem}[Inverse estimate for fourth-order operators]
\label{thm:k}
Consider the spaces~$\Ldomain$ and~$\Lrange$ defined in~\eqref{eqn:ldef},
as well as the bounded linear operator~$L \in \cL(\Ldomain,\Lrange)$
acting on~$m \in \N_0$ scalar parameters $\LdomainElementScalar_1, \ldots, \LdomainElementScalar_m$ and
on~$n \in \N$ functions~$v_k \colon \Omega \to \R$ in such a way that
the first~$m$ components of~$L$ are given by the scalars
\begin{equation} \label{thm:k1}
  \sum_{i=1}^m \alpha_{ki} \LdomainElementScalar_i +
    \sum_{j=1}^n l_{kj}(\LdomainElementFun_j)
  \qquad\mbox{ for }\qquad
  k = 1,\ldots,m \; ,
\end{equation}
and the next~$n$ components of~$L$ are given by the functions
\begin{equation} \label{thm:k2}
  -\beta_{k} \Delta^2 \LdomainElementFun_k 
    - \sum_{i=1}^m b_{ki} \LdomainElementScalar_i
    - \Delta \sum_{j=1}^n c_{kj} \LdomainElementFun_j
    - \sum_{j=1}^{n} \gamma_{kj} \LdomainElementFun_{j}
  \qquad\mbox{ for }\qquad
  k = 1,\ldots,n \; .
\end{equation}
In these formulas, the variables~$\alpha_{ki} \in \R$,
$\beta_{k} > 0$, and $ \gamma_{kj} \in \R $ are real constants, 
while~$b_{ki} \in \hzerobar$, and~$c_{kj} \in \htwo$. Moreover,
the~$l_{kj}$ denote bounded linear functionals with Riesz
representative in the spaces~$P_N U_j$, i.e., there exist functions
$a_{kj} \in P_N U_j$ such that one has the identities
$\ell_{kj}(v_j) = (a_{kj},\LdomainElementFun_j)_{\htwobar}$.

Let~$K_N$ be a constant satisfying~\eqref{eqn:defkn}, and define
$ C_T = (\min_{j=1,\ldots,n} \beta_{j})^{-1} > 0$. Furthermore,
define the constants~$A$ and~$B$ by
\begin{align*}
  A \; &:= \; \frac{K_N \sqrt{n}}{\pi^2 N^2} \left( \sum_{k=1}^\nfunctions 
  \max_{1 \le j \le \nfunctions}
      \|c_{kj}\|_{\infty}^2
  \right)^{1/2} \; ,
  \\[1ex]
  B \; &:= \; \frac{C_T \sqrt{2 \max\{m,n\}}}{\pi^2 N^2} \left( \sum_{k=1}^\nfunctions 
  \max_{\substack{1 \le i \le \nscalars \\ 1 \le j \le \nfunctions}} 
      \left\{\|b_{ki}\|_{\hzerobar}, \left( C_b C_e \|c_{kj}\|_{\htwo} + 
      \frac{|\gamma_{kj}|}{\pi^2} \right) \right\}^2
  \right)^{1/2} \; ,
\end{align*}
and assume there exists a constant~$\tau > 0$ and an integer $N \in \N$
such that
\begin{equation} \label{thm:k3}
  \sqrt{A^2+B^2}
  \; \le \; \tau \; < \; 1 \; .
\end{equation}
Then the
operator~$L$ in \eqref{eqn:ldef} satisfies
\begin{equation} \label{thm:k4}
  \left\| L^{-1} \right\|_{\cL(\Lrange,\Ldomain)} \le
  \frac{\max ( K_N, C_T) }{1-\tau} \; . 
\end{equation}
\end{theorem}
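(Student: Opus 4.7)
The plan is to establish the a priori bound $\|\LdomainElement\|_{\Ldomain} \le \frac{\max(K_N, C_T)}{1-\tau}\,\|L\LdomainElement\|_{\Lrange}$ for every $\LdomainElement \in \Ldomain$ via a block decomposition with respect to the projection $P_N$. Specifically, I would write $\LdomainElement = \LdomainElement^N + \LdomainElement^\perp$ with $\LdomainElement^N \in \Ldomain_N := \R^m \times \prod_{i=1}^n P_N U_i$ and $\LdomainElement^\perp \in \Ldomain_\perp := \{0\} \times \prod_{i=1}^n (I-P_N) U_i$, and likewise $\LrangeElement = L\LdomainElement = \LrangeElement^N + \LrangeElement^\perp$. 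Because each basis function in $\basisSymbol_\indexsetSymbol$ lives entirely in either $\Ldomain_N$ or $\Ldomain_\perp$, this splitting is orthogonal in both product norms, and the two pieces can be estimated separately to produce exactly the constants $A$ and $B$ of the statement. Invertibility of $L$ would then follow either from a matching a priori bound for the Hilbert adjoint $L^*$ (which has an analogous structure, giving closed range together with dense range), or from the observation that $L$ is a compact lower-order perturbation of the invertible block-diagonal bi-Laplacian and hence Fredholm of index zero.

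For the low-frequency piece, the equation $\LrangeElement^N = P_N L\LdomainElement = L_N\LdomainElement^N + P_N L\LdomainElement^\perp$ and the inverse bound on $L_N$ yield $\|\LdomainElement^N\|_{\Ldomain} \le K_N \|\LrangeElement^N\|_{\Lrange} + K_N\|P_N L\LdomainElement^\perp\|_{\Lrange}$. The key observation is the sparsity of $P_N L$ on $\Ldomain_\perp$: the functionals $l_{kj}$ annihilate $(I-P_N)U_j$ since their Riesz representatives are $\htwobar$-orthogonal to the tail, whereas both $-\beta_k\Delta^2$ and multiplication by $\gamma_{kj}$ preserve frequency. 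The only surviving contribution is $-P_N\Delta\sum_j c_{kj}\LdomainElementFun_j^\perp$, which I would bound using $\|\Delta f\|_{\hmtwobar}\le\|f\|_{L^2}$ from Lemma~\ref{lem:sobolevscale}(a), the Hölder inequality $\|c_{kj}\LdomainElementFun_j^\perp\|_{L^2}\le\|c_{kj}\|_\infty\|\LdomainElementFun_j^\perp\|_{L^2}$, the tail estimate $\|\LdomainElementFun_j^\perp\|_{L^2}\le(\pi^2N^2)^{-1}\|\LdomainElementFun_j^\perp\|_{\htwobar}$ of Lemma~\ref{lem:projtailest}, and one Cauchy-Schwarz across the $n$ functions. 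This yields $\|P_N L\LdomainElement^\perp\|_{\Lrange}\le(A/K_N)\|\LdomainElement^\perp\|_{\Ldomain}$ and hence $\|\LdomainElement^N\|_{\Ldomain}\le K_N\|\LrangeElement^N\|_{\Lrange}+A\|\LdomainElement^\perp\|_{\Ldomain}$.

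For the high-frequency piece I would isolate the leading bi-Laplacian by setting $\hat T \LdomainElement^\perp := (-\beta_k\Delta^2\LdomainElementFun_k^\perp)_{k=1}^{n}\in\Lrange_\perp$, which is a componentwise isometry up to $\beta_k$ by Lemma~\ref{lem:sobolevscale}(a), giving $\|\LdomainElement^\perp\|_{\Ldomain}\le C_T\|\hat T\LdomainElement^\perp\|_{\Lrange}$. Writing the $k$-th function component of $(I-P_N)L\LdomainElement=\LrangeElement^\perp$ as $-\beta_k\Delta^2\LdomainElementFun_k^\perp$ plus the tails of the three lower-order couplings and rearranging, I obtain componentwise
\begin{displaymath}
  \hat T\LdomainElement^\perp \;=\; \LrangeElement^\perp \;+\; \sum_{i=1}^m (I-P_N)b_{ki}\LdomainElementScalar_i \;+\; (I-P_N)\Delta\sum_{j=1}^n c_{kj}\LdomainElementFun_j \;+\; \sum_{j=1}^n \gamma_{kj}\LdomainElementFun_j^\perp.
\end{displaymath}
Each of the three error terms picks up a factor $(\pi^2 N^2)^{-1}$ from Lemma~\ref{lem:projtailest}: the $b_{ki}$ term via its $\hzerobar$-norm; the $\Delta c_{kj}\LdomainElementFun_j$ term via the Laplacian isometry combined with the Banach algebra estimate Lemma~\ref{lem:sobolev}(b) and the norm equivalence Lemma~\ref{lem:sobolev}(c), which produces the factor $C_b C_e\|c_{kj}\|_{\htwo}$; and the $\gamma_{kj}\LdomainElementFun_j^\perp$ term via a tail estimate followed by the scale embedding $\hzerobar\hookrightarrow\hmtwobar$ of Lemma~\ref{lem:sobolevscale}(b), producing the factor $|\gamma_{kj}|/\pi^2$. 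Combining the $m$ scalar and $n$ functional contributions by a Cauchy-Schwarz of the form $\sqrt{m}\,a+\sqrt{n}\,b\le\sqrt{2\max\{m,n\}}\sqrt{a^2+b^2}$ and then summing over $k$ yields $\|\LdomainElement^\perp\|_{\Ldomain}\le C_T\|\LrangeElement^\perp\|_{\Lrange}+B\|\LdomainElement\|_{\Ldomain}$.

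To close the argument I set $\bar x = (\|\LdomainElement^N\|,\|\LdomainElement^\perp\|)^{\top}$ and $\bar y=(\|\LrangeElement^N\|,\|\LrangeElement^\perp\|)^{\top}$ in $\R^2$. The two estimates above, combined with the triangle inequality in $\R^2$ and the trivial bound $\|\LdomainElement^\perp\|\le\|\LdomainElement\|_{\Ldomain}=\|\bar x\|_2$, give
\begin{displaymath}
  \|\LdomainElement\|_{\Ldomain} \;\le\; \sqrt{K_N^2\|\LrangeElement^N\|^2+C_T^2\|\LrangeElement^\perp\|^2}+\sqrt{A^2+B^2}\,\|\LdomainElement\|_{\Ldomain}
  \;\le\; \max(K_N,C_T)\,\|\LrangeElement\|_{\Lrange}+\tau\,\|\LdomainElement\|_{\Ldomain},
\end{displaymath}
and rearranging produces the bound \eqref{thm:k4}. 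The hard part is the bookkeeping in the previous paragraph: combining three qualitatively different lower-order couplings into one clean per-$k$ estimate, and in particular arranging the Cauchy-Schwarz step carefully enough that the separate max-in-$i$ and max-in-$j$ estimates over the scalar and function parameters fuse into the single max over their union, producing exactly the factor $\sqrt{2\max\{m,n\}}$ that appears in $B$.
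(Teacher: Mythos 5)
Your proposal is correct, and its analytical core is the same as the paper's: you split along $P_N$, and your bounds for $P_NL$ on the high-frequency tail and for the tail of the lower-order couplings are precisely the paper's estimates for the operators $\cM$ and $\cN$ in Lemma~\ref{lem:rho1}, obtained from the same ingredients (Lemmas~\ref{lem:sobolev}, \ref{lem:sobolevscale}, \ref{lem:projtailest} and the same two Cauchy--Schwarz steps), so you arrive at the identical constants $A$ and $B$. The genuine difference is the final assembly. The paper constructs the explicit approximate inverse $S=L_N^{-1}P_N+T(I-P_N)$ of Definition~\ref{def:ST}, proves $\|S\|\le\max(K_N,C_T)$ and $\|I-SL\|\le\sqrt{A^2+B^2}$, and then the Neumann-series Proposition~\ref{prop:neumann} delivers bijectivity of $L$ and the norm bound simultaneously. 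You instead fold the two block estimates into the a priori bound $\|\mathbf{x}\|\le\max(K_N,C_T)\,\|L\mathbf{x}\|+\tau\|\mathbf{x}\|$, which yields injectivity, closed range, and the quantitative bound, but leaves surjectivity to a separate argument that you only sketch. Of your two options for that step, the Fredholm one is the solid choice: $L$ differs from the invertible map $(\eta,v)\mapsto(\eta,(-\beta_k\Delta^2v_k)_k)$ by a finite-rank perturbation on the scalar block together with function-block terms that factor through the compact embeddings of $\overline{\cH}^{0}$ and $\overline{\cH}^{2}$ into $\overline{\cH}^{-2}$, so $L$ is Fredholm of index zero and injectivity gives surjectivity; note, however, that the ``block-diagonal bi-Laplacian'' alone is not defined on the scalar components, so the unperturbed operator must carry an identity there. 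The adjoint route is weaker as stated, since $L^{*}$ is not literally of the form~\eqref{thm:k1}--\eqref{thm:k2} and the ``analogous'' a priori bound would have to be re-derived. In exchange for this extra step your version makes the separate roles of $A$ (feedback of the tail into the finite block) and $B$ (tail of the lower-order terms) slightly more transparent, but it gains nothing quantitatively.
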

At first glance it might seem strange that the constants~$\alpha_{kj}$
and the functions~$a_{kj}$ do not enter either the condition
in~\eqref{thm:k3} or the estimate in~\eqref{thm:k4}. This, however,
is not true, as they determine the constant~$K_N$ from~\eqref{eqn:defkn}.

Before we begin to prove this main theorem, we state a
necessary result which is based on a Neumann series argument
to derive bounds on the operator norm of an inverse of an
operator. This is a standard functional-analytic technique,
which we state here for the reader's convenience. A proof
can be found in~\cite[Lemma~4]{sander:wanner:16a}.
\begin{proposition}[Neumann series inverse estimate]
\label{prop:neumann}
Let $\cA \in \cL(\Ldomain,\Lrange)$ be an arbitrary bounded linear operator
between two Banach spaces, and let $\cS \in \cL(\Lrange,\Ldomain)$ be one-to-one.
Assume that there exist positive constants~$\rho_1$ and~$\rho_2$
such that
\begin{displaymath}
  \| I - \cS \cA \|_{\cL(\Ldomain,\Ldomain)} \le \rho_1 < 1
  \qquad\mbox{ and }\qquad
  \|\cS\|_{\cL(\Lrange,\Ldomain)} \le \rho_2 \;. 
\end{displaymath}
Then $\cA$ is one-to-one and onto, and 
\begin{displaymath}
\| \cA^{-1}\|_{\cL(\Lrange,\Ldomain)} \le \frac{\rho_2}{1-\rho_1} \;.
\end{displaymath}
In subsequent discussions, we will refer to~$\cS$ as an
{\em approximate inverse}.
\end{proposition}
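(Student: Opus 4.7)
The plan is to invoke the Neumann series estimate of Proposition~\ref{prop:neumann} with a carefully constructed approximate inverse $\cS$ built from $L_N^{-1}$ on the low-frequency subspace $\Ldomain_N = P_N \Ldomain$ and from the inverse of the leading-order bi-Laplacian on the high-frequency tail $(I - P_N)\Ldomain$. Splitting $\Lrange$ orthogonally as $P_N \Lrange \oplus (I - P_N)\Lrange$, I would define
\begin{displaymath}
  \cS \LrangeElement \; := \; L_N^{-1} P_N \LrangeElement \; + \; \cT (I - P_N)\LrangeElement \; ,
\end{displaymath}
where $\cT$ sends the scalar output components to zero and acts on the $k$-th functional output by $-\beta_{k}^{-1}\Delta^{-2}$. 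The Laplacian isometry of Lemma~\ref{lem:sobolevscale}(a) shows that $-\beta_k^{-1}\Delta^{-2}$ has norm $\beta_k^{-1} \le C_T$ as a map $\hmtwobar \to \htwobar$, and together with the orthogonality of the decomposition this yields $\|\cS\|_{\cL(\Lrange,\Ldomain)} \le \max(K_N,C_T)$, which is precisely the numerator in~\eqref{thm:k4}.

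Next, for an arbitrary $\LdomainElement = (\LdomainElementScalar, \LdomainElementFun^L + \LdomainElementFun^H)$ with $\LdomainElementFun^L = P_N \LdomainElementFun$ and $\LdomainElementFun^H = (I - P_N)\LdomainElementFun$, I would compute $(I - \cS L)\LdomainElement$ directly. Since $L_N^{-1} L_N$ is the identity on $\Ldomain_N$ and $-\beta_k^{-1}\Delta^{-2}(-\beta_k\Delta^2)$ is the identity on the tail of each functional component, the residual is driven only by cross-frequency coupling and the lower-order terms. Two key simplifications hold: the Riesz-representative assumption $a_{kj} \in P_N \subspaceSymbol_j$ forces $l_{kj}(\LdomainElementFun_j^H) = 0$ by orthogonality in $\htwobar$, and $P_N$ commutes with $\Delta^2$ and with scalar multiplication, so $P_N L(0, \LdomainElementFun^H)$ reduces in each functional component to $-\Delta P_N \sum_j c_{kj}\LdomainElementFun_j^H$. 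The upshot is that $(I-\cS L)\LdomainElement$ decomposes as a sum of a low-frequency piece $-L_N^{-1} P_N L(0,\LdomainElementFun^H) \in \Ldomain_N$ and a high-frequency piece lying in $(I-P_N)\Ldomain$, so their $\Ldomain$-norms combine via the Pythagorean identity.

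The low-frequency piece is bounded by $K_N\,\|P_N L(0,\LdomainElementFun^H)\|_\Lrange$, and using the contraction $\|{-}\Delta P_N g\|_{\hmtwobar} \le \|g\|_{L^2}$, the infinity-norm estimate $\|c_{kj}\LdomainElementFun_j^H\|_{L^2} \le \|c_{kj}\|_\infty \|\LdomainElementFun_j^H\|_{L^2}$, the projection tail estimate of Lemma~\ref{lem:projtailest} with $m=2$, $\ell=0$, and Cauchy--Schwarz in the index $j$, one obtains exactly $A\|\LdomainElement\|_\Ldomain$. The high-frequency piece collects three families --- the $b_{ki}\LdomainElementScalar_i$ contributions, the $c_{kj}\LdomainElementFun_j$ contributions (after recombining the $L$ and $H$ pieces of $\LdomainElementFun_j$), and the $\gamma_{kj}\LdomainElementFun_j^H$ contributions --- which I would bound respectively via the $\hzerobar \hookrightarrow \hmtwobar$ projection tail estimate, the Banach algebra of Lemma~\ref{lem:sobolev}(b) combined with the equivalence constant $C_e$ of Lemma~\ref{lem:sobolev}(c) and then the $\htwobar \hookrightarrow \hzerobar$ tail estimate, and the scale embedding of Lemma~\ref{lem:sobolevscale}(b) contributing the $\pi^{-2}$ factor attached to $|\gamma_{kj}|$. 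Cauchy--Schwarz over $i$ and $j$ together with the elementary inequality $a^2 + b^2 \le 2\max(a,b)^2$ then produces the $\sqrt{2\max\{m,n\}}$ factor, the max-over-indices structure inside $B$, and the bound $B\|\LdomainElement\|_\Ldomain$. Combining via the Pythagorean identity gives $\|I - \cS L\|_{\cL(\Ldomain,\Ldomain)} \le \sqrt{A^2+B^2} \le \tau$, and the conclusion~\eqref{thm:k4} follows from Proposition~\ref{prop:neumann}.

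The main technical obstacle is the careful bookkeeping needed to ensure that the $c_{kj}$ coupling appears with the weaker $\infty$-norm in $A$ (permissible because $\LdomainElementFun_j^H$ is already smoothed by the projection tail) but with the stronger $\cH^2$ Banach algebra norm in $B$ (required because that contribution is composed only with $\Delta^{-1}$ rather than $\Delta^{-2}$), and, in parallel, to keep the low- and high-frequency residual components strictly orthogonal so that the final estimate attains the sharpness $\sqrt{A^2+B^2}$ rather than the looser $A + B$.
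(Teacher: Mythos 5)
You have not proved the statement you were asked to prove. The statement in question is Proposition~\ref{prop:neumann} itself --- the abstract Neumann-series fact that if $\cS$ is one-to-one, $\| I - \cS\cA\| \le \rho_1 < 1$, and $\|\cS\| \le \rho_2$, then $\cA$ is bijective with $\|\cA^{-1}\| \le \rho_2/(1-\rho_1)$. Your proposal instead outlines a proof of Theorem~\ref{thm:k}: you construct the specific approximate inverse $\cS$ from $L_N^{-1}$ and the tail operator, estimate $\|I-\cS L\|$ by $\sqrt{A^2+B^2}$, and then write ``the conclusion follows from Proposition~\ref{prop:neumann}.'' As a proof of Proposition~\ref{prop:neumann} this is circular --- you are invoking the very result you were asked to establish --- and everything else in your write-up (the frequency splitting, the Riesz-representative simplification, the $A$ and $B$ bounds) is application-specific material that has no bearing on the abstract proposition. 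In the paper the proposition is not reproved at all; it is quoted from an earlier reference, and your material corresponds to Sections~\ref{sec:approxinverse} and~\ref{sec:comprof}, not to this statement.

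What a correct proof needs is short and purely functional-analytic. Since $\|I - \cS\cA\|_{\cL(\mathbf{X},\mathbf{X})} \le \rho_1 < 1$, the Neumann series $\sum_{j\ge 0} (I-\cS\cA)^j$ converges in operator norm, so $\cS\cA$ is invertible on the domain space with $\|(\cS\cA)^{-1}\| \le (1-\rho_1)^{-1}$. Invertibility of $\cS\cA$ immediately gives that $\cA$ is one-to-one. For surjectivity, given $y$ in the range space set $x = (\cS\cA)^{-1}\cS y$; then $\cS(\cA x - y) = 0$, and the assumed injectivity of $\cS$ forces $\cA x = y$. Hence $\cA$ is bijective with $\cA^{-1} = (\cS\cA)^{-1}\cS$, and submultiplicativity of the operator norm yields $\|\cA^{-1}\| \le \rho_2/(1-\rho_1)$. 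Note in particular where the injectivity of $\cS$ enters --- it is exactly what upgrades the left inverse $(\cS\cA)^{-1}\cS$ to a two-sided inverse --- a point your proposal never touches.
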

We are now ready to proceed with the proof of the main result
of the section, Theorem~\ref{thm:k}. Our goal is
to prove that~$L$ is one-to-one, onto, and has an inverse whose
operator norm is bounded by the value 
\begin{displaymath}
  K = \frac{\max(K_N,C_T)}{1-\tau} \; .
\end{displaymath}
The complete proof of the above is spread across the remaining
subsections, with the following structure of the key definitions and
auxiliary results:
\begin{itemize}
\item Section \ref{sec:findim}, Lemma \ref{lem:kn} provides a computable
      upper bound for $ \left\| L_N^{-1} \right\| $. 
\item Section \ref{sec:approxinverse}, Definition \ref{def:ST} gives a
      construction of the approximate inverse $ \cS $. 
\item Section \ref{sec:approxinverse}, Lemma \ref{lem:rho2} shows that
      we can take $ \rho_2 = \max(K_N, C_T) $. 
\item Section \ref{sec:comprof}, Lemma \ref{lem:rho1} provides a
      formula for $ \rho_1 $. 
\end{itemize}
Once all of these results have been established, the proof of
Theorem~\ref{thm:k} is complete.
\subsection{Finite-dimensional projections of the linearization}
\label{sec:findim}
\newcommand{\xCoeff}{\xi}%
\newcommand{\yCoeff}{\zeta}%
In this section, we consider~$L_N$, the finite-dimensional projection
of the operator~$L$, which was introduced in~\eqref{eqn:defln}. The
linear map~$L_N$ is tractable using rigorous computational methods,
since calculating a finite-dimensional inverse is something that can
be done using numerical linear algebra. To derive~$L_N$ in more detail,
we recall the definitions of the following projection spaces, all of
which are Hilbert spaces:
\begin{displaymath}
  \begin{array}{rclcrclcrcl}
    \Ldomain & = & \R^\nscalars \times \prod_{\subspaceIndex=1}^{\nfunctions}
      \subspaceSymbol_\subspaceIndex \;, & \quad &
      \Ldomain_N & = & P_N \Ldomain \; , & \quad &
      \Ldomain_\infty & = & (I- P_N) \Ldomain \; , \\[1ex]
    \Lrange & = & \R^\nscalars \times \prod_{\subspaceIndex=1}^{\nfunctions}
      \rangeSubspaceSymbol_\subspaceIndex \;, & \quad &
      \Lrange_N & = & P_N \Lrange \; , & \quad &
      \Lrange_\infty & = & (I- P_N) \Lrange \; ,
  \end{array}
\end{displaymath}
where the projection operator~$P_N$ is applied componentwise on the
functional components of the spaces~$\Ldomain$ and~$\Lrange$,
i.e., on each $ \subspaceSymbol_\subspaceIndex $ individually,
and acts as the identity on the scalar components.
Recall that in~(\ref{eqn:defln}) we defined $L_N: \Ldomain_N \to \Lrange_N$ via
$L_N = \left. P_N L \right|_{\Ldomain_N}$.

In order to work with this finite-dimensional operator in a
straightforward computational manner, we need to find its matrix
representation. If we define $ (\indexsetSymbol_i)_N $ to be the
subset of all multi-indices $ k \in \indexsetSymbol_i $ such that 
$ 0 < |k|_\infty < N $ and $ \indexsetSymbol_N =
\left\{(\indexsetSymbol_1)_N, (\indexsetSymbol_2)_N, \ldots,
(\indexsetSymbol_\nfunctions)_N \right\}$, then both~$\Ldomain_N$
and~$\Lrange_N$ have the basis~$\basisSymbol_{\indexsetSymbol_N}$
and one obtains the matrix representation $B$ via the definition
\begin{equation}\label{eqn:defbkell}
    B = \begin{pmatrix}
        B_{00} & B_{01} & \ldots & B_{0\nfunctions} \\
        B_{10} & B_{11} & \ddots & \vdots \\ 
        \vdots & \ddots & \ddots & B_{(\nfunctions-1)\nfunctions}\\
        B_{\nfunctions 0} & \ldots & B_{\nfunctions(\nfunctions-1)} &
          B_{\nfunctions \nfunctions}
    \end{pmatrix} \; ,
\end{equation}
where the matrices~$ B_{{i,j}} $ are as follows. Denote the element
$ \LdomainElement \in \Ldomain $ in the form $ \LdomainElement =
(\LdomainElementScalar, \LdomainElementFun) $, where
$ \LdomainElementScalar = (\LdomainElementScalar_1,\LdomainElementScalar_2,
\ldots,\LdomainElementScalar_\nscalars) \in \R^\nscalars $ and
$ \LdomainElementFun = (\LdomainElementFun_1,\LdomainElementFun_2,
\ldots,\LdomainElementFun_\nfunctions) \in \prod_{\subspaceIndex=1}^\nfunctions
\subspaceSymbol_\subspaceIndex$. Then the basis elements of
$ \basisSymbol_{\indexsetSymbol_N} $ are given by $ (e_\ell,0) $ for
$ 1 \leq \ell \leq \nscalars $ and~$ (0,\Phi_{ik}) $ for
$k \in (\indexsetSymbol_i)_N$ and $1 \le i \le n$, where~$ \Phi_{ik} $
is defined as the $ \nfunctions $-dimensional vector with~$ \phi_{k} $
in the $ i $-th component and $ 0 $ elsewhere. In addition, we consider
the Hilbert space $ Z = \R^\nscalars \times (L^2(\Omega))^\nfunctions $
and recall that for $ t_1, t_2 \in \R^\nscalars $ and $ w_1, w_2 \in
\prod_{\subspaceIndex=1}^{\nfunctions} \subspaceSymbol_\subspaceIndex $
the inner product on~$Z$ is defined via
\begin{displaymath}
  ((t_1,w_1), (t_2,w_2))_Z =
  (t_1,t_2)_{\R^\nscalars} +
    \sum_{i=1}^\nfunctions ((w_1)_i, (w_2)_i)_{L^2(\Omega)} \; .
\end{displaymath}
Then the above matrices~$ B_{{ij}} $ are defined via the identities
\begin{align*}
  (B_{00})_{k\ell} &= (L[(e_\ell,0)],(e_k,0))_Z \; ,
    & k &= 1,\ldots,\nscalars \; , & \ell& = 1,\ldots,\nscalars \; , \\
  (B_{i0})_{k\ell} &= (L[(e_\ell,0)],(0,\Phi_{ik}))_Z \; ,
    & k & \in (\indexsetSymbol_i)_N \; , & \ell &= 1,\ldots,\nscalars \; , \\
  (B_{0j})_{k\ell} &= (L[(0,\Phi_{j\ell})],(e_k,0))_Z \; ,
    & k &= 1,\ldots,\nscalars \; , & \ell &\in (\indexsetSymbol_j)_N \; , \\
  (B_{ij})_{k\ell} &= (L[(0,\Phi_{j\ell})],(0,\Phi_{ik}))_Z \; ,
    & k &\in (\indexsetSymbol_i)_N \; , & \ell &\in (\indexsetSymbol_j)_N \; ,
\end{align*}
where $i,j = 1,\ldots,n$. To conclude the abstract definition, we
emphasize that
\begin{align*}
  B_{00} &\in \R^{\nscalars \times \nscalars} \; ,
    & B_{0j} &\in \R^{\nscalars \times \#(\indexsetSymbol_j)_N} \; , \\
  B_{i0} &\in \R^{\#(\indexsetSymbol_i)_N \times \nscalars} \; ,
    & B_{ij} &\in \R^{\#(\indexsetSymbol_i)_N \times \#(\indexsetSymbol_j)_N}
    \; ,
\end{align*}
where $ \#S $ denotes the number of elements in the set $ S $.

Now that we have properly defined the involved function spaces and the
procedure to construct the matrix representation~$ B $ of~$L_N$, we can
use~\eqref{thm:k1} and~\eqref{thm:k2} to obtain an explicit representation
of~$ L $ acting on $ \LdomainElement = (\LdomainElementScalar, \LdomainElementFun)
\in \Ldomain $. As mentioned in the formulation of Theorem~\ref{thm:k}, using the
Riesz Representation theorem one can write each functional~$ \ell_{kj}(v_j) $
in~\eqref{thm:k1} as the inner product~$ (a_{kj}, v_j)_{\htwobar} $. 
This substitution yields the explicit form
\begin{equation}
  \label{eqn:explicit-form-L}
  L \LdomainElement = 
  \begin{pmatrix}
    \left[ \sum_{i=1}^m \alpha_{ki} \LdomainElementScalar_i +
    \sum_{j=1}^n (a_{kj},\LdomainElementFun_j)_{\htwobar} \right]_{k=1}^\nscalars \\
    \left[ -\beta_{k} \Delta^2 \LdomainElementFun_k 
      - \sum_{i=1}^m b_{ki} \LdomainElementScalar_i 
      - \Delta \sum_{j=1}^n c_{kj} \LdomainElementFun_j 
      - \sum_{j=1}^{\nfunctions} \gamma_{kj} \LdomainElementFun_j
    \right]_{k=1}^\nfunctions
  \end{pmatrix} \; ,
\end{equation}
which in turn leads to the following explicit forms for the components
of~$ B $:
\begin{subequations}
    \label{eqn:defbijkell}
    \begin{align}
        (B_{00})_{k\ell} &= \alpha_{k\ell} \; , \\[1ex]
        (B_{i0})_{k\ell} &= -(b_{i\ell}, \phi_{k})_{L^2(\Omega)} \; , \\[1ex]
        (B_{0j})_{k\ell} &= (a_{kj},\phi_{\ell})_{\suitableFunctionSpace} \; , \\[1ex]
        (B_{ij})_{k\ell} &= (-\delta_{ij}\beta_{i}\Delta^2 \phi_{\ell} -
          \Delta c_{ij}\phi_{\ell} - \gamma_{ij}\phi_{\ell}, \phi_{k})_{L^2(\Omega)}
          \nonumber \\[0.5ex]
        &= (-\delta_{ij}\beta_{i}\kappa_{\ell}^2 \phi_{\ell} + \kappa_{k}
          c_{ij}\phi_{\ell} - \gamma_{ij}\phi_{\ell}, \phi_{k})_{L^2(\Omega)}
          \nonumber \\[0.5ex]
        &= -\delta_{ij}\delta_{k\ell} \beta_{i}\kappa_{\ell}^2 -
          \gamma_{ij}\delta_{k\ell} +
          (\kappa_{k} c_{ij}\phi_{\ell}, \phi_{k})_{L^2(\Omega)}
          \label{eqn:functional-b-k-ell} \; ,
    \end{align}
\end{subequations}
where we use $(-\Delta c_{ij}\phi_{\ell},\phi_{k})_{L^2(\Omega)} =
(c_{ij}\phi_{\ell}, -\Delta \phi_{k})_{L^2(\Omega)} =
(c_{ij}\phi_{\ell}, \kappa_k \phi_{k})_{L^2(\Omega)}$,
as well as~\eqref{def:kappak}.

The matrix representation $ B $ characterizes~$L_N$ on the algebraic
level in the following sense. If we consider an element
$ \LdomainElement_N \in \Ldomain_N $, then one can introduce the
representations
\begin{displaymath}
  \LdomainElement_N  = \sum_{b \in \basisSymbol_{\indexsetSymbol_N}}
    \xCoeff_b b
  \qquad\mbox{ and }\qquad
  L_N \LdomainElement_N  = \sum_{b \in \basisSymbol_{\indexsetSymbol_N}}
    \yCoeff_b b \; ,
\end{displaymath}
where the coefficients satisfy both $\xCoeff_b \in \R$ and
$\yCoeff_b \in \R$, and the basis elements~$b$ are taken from the
set
\begin{displaymath}
  \basisSymbol_{\indexsetSymbol_N} \; = \;
  \left\{ e_j \times 0_{\left(\htwobar\right)^\nfunctions}
    \right\}_{j=1 \ldots \nscalars} \bigcup \;
  \left\{ 0_{\R^\nscalars \times \left(\htwobar\right)^{i-1}}
    \times \phi_{\indexIndex} \times 0_{\left(\htwobar\right)^{\nfunctions-i}}
    \right\}_{\subspaceIndex=1 \ldots \nfunctions,\,\indexIndex
    \in (\indexsetSymbol_\subspaceIndex)_N} \; .
\end{displaymath}
If we collect the numbers~$\xCoeff_b$ and~$\yCoeff_b$ in
vectors~$\xCoeff$ and~$\yCoeff$ in the straightforward way, then
one immediately obtains the matrix-vector identity
\begin{displaymath}
  \yCoeff = B \xCoeff \; .
\end{displaymath}
This natural algebraic representation has one slight drawback that still needs
to be addressed. We would like to use the regular Euclidean norm on real vector
spaces, as well as the induced matrix norm, to study the
$\cL(\Ldomain_N,\Lrange_N)$-norm of~$L_N$. For our computer-assisted proof,
we are therefore interested in a scaled version of~$ B $ which gives a
computable~$ K_N $, and this scaled matrix is the subject of the following Lemma.
\begin{lemma}[Computable $ K_N $]
  \label{lem:kn}
  Let $ B $ be defined as in \eqref{eqn:defbkell},
  $ D_i = \text{diag }(\{ \kappa_k : k \in (\indexsetSymbol_i)_N \}) $,
  and let~$ I_m $ be the $ m \times m $ identity matrix.
  Assemble $ D $ as the block diagonal matrix
  \begin{displaymath}
    D = \left( \begin{array}{ccccc}
        I_{\nscalars} & 0 & 0 & \ldots & 0 \\
        0 & D_1 & 0 & \ldots & 0 \\
        0 & 0 & D_2 & \ddots & \vdots \\
        \vdots & \vdots & \ddots & \ddots & 0 \\
        0 & 0 & \ldots & 0 & D_\nfunctions
        \end{array} \right) \; ,
  \end{displaymath}
  and define $ \tilde{B} = D^{-1}BD^{-1} $.
  Then $ K_N $ in \eqref{eqn:defkn} can be taken as $\| \tilde{B}^{-1}\|_2 $.
  In other words, using this formula, we can use interval arithmetic to
  establish a rigorous upper bound on the norm of this finite-dimensional
  inverse.
\end{lemma}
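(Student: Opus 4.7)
The plan is to express both $\Ldomain$- and $\Lrange$-norms of elements in the finite-dimensional subspaces $\Ldomain_N$ and $\Lrange_N$ as weighted Euclidean norms on the coefficient vectors $\xi$ and $\zeta$, with the weights being precisely the entries of $D$ (respectively $D^{-1}$). Once that is done, the operator norm of $L_N^{-1}$ reduces directly to the spectral norm of $\tilde{B}^{-1}$ via a change of variables, and the bound $K_N = \|\tilde{B}^{-1}\|_2$ can be computed rigorously with interval arithmetic.

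First, I would invoke Lemma~\ref{lem:orthoghell}: in $\overline{\cH}^\ell$ the family $\{\kappa_k^{-\ell/2}\phi_k\}$ is orthonormal, so $\|\phi_k\|_{\overline{\cH}^\ell} = \kappa_k^{\ell/2}$. In particular, $\|\phi_k\|_{\htwobar} = \kappa_k$ and $\|\phi_k\|_{\hmtwobar} = \kappa_k^{-1}$. Now consider
\begin{displaymath}
  \LdomainElement_N = \sum_b \xi_b\, b \in \Ldomain_N,
  \qquad
  L_N \LdomainElement_N = \sum_b \zeta_b\, b \in \Lrange_N,
\end{displaymath}
with the basis $\basisSymbol_{\indexsetSymbol_N}$ from the excerpt. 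The subspaces $U_i \subset \htwobar$ and $V_i \subset \hmtwobar$ inherit their norms from $\htwobar$ and $\hmtwobar$, respectively, and the basis vectors of different blocks are mutually orthogonal in both $\Ldomain$ and $\Lrange$. Hence
\begin{displaymath}
  \|\LdomainElement_N\|_{\Ldomain}^2
  = \sum_{j=1}^m \xi_{j}^2 + \sum_{i=1}^n \sum_{k \in (\indexsetSymbol_i)_N} \kappa_k^{\,2}\, \xi_{i,k}^2
  = \|D\xi\|_2^2,
\end{displaymath}
and, analogously, using $\|\phi_k\|_{\hmtwobar}^2 = \kappa_k^{-2}$,
\begin{displaymath}
  \|L_N \LdomainElement_N\|_{\Lrange}^2
  = \sum_{j=1}^m \zeta_{j}^2 + \sum_{i=1}^n \sum_{k \in (\indexsetSymbol_i)_N} \kappa_k^{-2}\, \zeta_{i,k}^2
  = \|D^{-1}\zeta\|_2^2.
\end{displaymath}

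Next, since $\zeta = B\xi$ on the coefficient side, the operator norm is
\begin{displaymath}
  \|L_N^{-1}\|_{\cL(\Lrange_N, \Ldomain_N)}
  = \sup_{\xi \neq 0} \frac{\|\LdomainElement_N\|_{\Ldomain}}{\|L_N \LdomainElement_N\|_{\Lrange}}
  = \sup_{\xi \neq 0} \frac{\|D\xi\|_2}{\|D^{-1}B\xi\|_2}.
\end{displaymath}
Substituting $\xi = D^{-1}u$ and then $u = \tilde{B}^{-1}v$ (noting $D$ is invertible because all $\kappa_k > 0$), this ratio becomes
\begin{displaymath}
  \sup_{u \neq 0} \frac{\|u\|_2}{\|D^{-1}BD^{-1}u\|_2}
  = \sup_{v \neq 0} \frac{\|\tilde{B}^{-1}v\|_2}{\|v\|_2}
  = \|\tilde{B}^{-1}\|_2,
\end{displaymath}
provided $\tilde{B}$ is invertible. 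This yields the claimed value $K_N = \|\tilde{B}^{-1}\|_2$.

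The only subtle point, and the main thing I would verify carefully, is the equivalence $\|L_N^{-1}\|_{\cL(\Lrange_N, \Ldomain_N)} < \infty \iff \tilde{B}$ is invertible; this holds because $D$ has strictly positive diagonal, so $B$ and $\tilde{B}$ are simultaneously invertible. Invertibility of $\tilde{B}$ is itself certified during the interval-arithmetic computation of $\|\tilde{B}^{-1}\|_2$ (e.g.\ via a validated singular value decomposition or by bounding the smallest singular value away from zero), which is a standard capability of packages such as Intlab. No other step presents a real obstacle: the remainder is the bookkeeping of diagonal scalings that was already baked into the definition of $D$.
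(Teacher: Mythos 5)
Your argument is correct and is essentially the paper's own proof: both rest on Lemma~\ref{lem:orthoghell} to identify the diagonal scaling $D$ that turns the coefficient representation into an orthonormal one, so that the operator norm of $L_N^{-1}$ becomes the spectral norm of $\tilde{B}^{-1}=DB^{-1}D$. The paper phrases this as a change to the rescaled bases $\{\kappa_k^{-1}\phi_k\}$ and $\{\kappa_k\phi_k\}$ while you write the $\Ldomain$- and $\Lrange$-norms as weighted Euclidean norms and substitute in the supremum, but these are the same computation.
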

\begin{proof}
  To begin with, we recall Lemma~\ref{lem:orthoghell} which shows that,
  for each $ i = 1, \ldots, \nfunctions $,
  the collection~$\{ \kappa_k^{-1} \phi_k(x) \}$ with~$k \in (\indexsetSymbol_i)_N$ as above is
  an orthonormal basis in $P_N U_i \subset \htwobar$, and~$\{ \kappa_k \phi_k(x) \}$
  is an orthonormal basis in $\Lrange_N \subset \Lrange$, where the
  eigenvalues~$ \kappa_k $ are defined in~\eqref{def:kappak}.
  Thus, we need to use the modified representations
  \begin{displaymath}
      \LdomainElement_N  = \sum_{\tilde{b} \in \tilde{\basisSymbol}_{\indexsetSymbol_N}}
          \tilde{\xCoeff}_{\tilde{b}} \tilde{b}
      \qquad\mbox{ and }\qquad
      L_N \LdomainElement_N  = \sum_{\hat{b} \in \hat{\basisSymbol}_{\indexsetSymbol_N}}
          \tilde{\yCoeff}_{\hat{b}} \hat{b}
  \end{displaymath}
  where we use the alternative basis sets (note the $ \kappa_k^{\pm 1} $ factors)
  \begin{align*}
    \tilde{\basisSymbol}_{\indexsetSymbol_N} &=
      \left\{ e_j \times 0_{\left(\htwobar\right)^\nfunctions}
      \right\}_{j=1 \ldots \nscalars} \bigcup \; \left\{ 0_{\R^\nscalars
      \times \left(\htwobar\right)^{i-1}} \times
      \kappa_{\indexIndex}^{-1}\phi_{\indexIndex} \times
      0_{\left(\htwobar\right)^{\nfunctions-i}}
      \right\}_{\subspaceIndex=1 \ldots \nfunctions,\,\indexIndex
      \in (\indexsetSymbol_i)_N} \; , \\
    \hat{\basisSymbol}_{\indexsetSymbol_N} &=
      \left\{ e_j \times 0_{\left(\htwobar\right)^\nfunctions}
      \right\}_{j=1 \ldots \nscalars} \bigcup \; \left\{ 0_{\R^\nscalars
      \times \left(\htwobar\right)^{i-1}} \times
      \kappa_{\indexIndex}\phi_{\indexIndex} \times
      0_{\left(\htwobar\right)^{\nfunctions-i}}
      \right\}_{\subspaceIndex=1 \ldots \nfunctions,\,\indexIndex
      \in (\indexsetSymbol_i)_N} \; .
  \end{align*}
  In order to pass back and forth 
  between these two representations we use the block diagonal matrix $ D $.
  One can see that on the level of vectors we have
  \begin{displaymath}
    \xCoeff = D^{-1} \tilde{\xCoeff}
    \quad\mbox{ and }\quad
    \yCoeff = D \tilde{\yCoeff} \; ,
    \quad\mbox{ and therefore }\quad
    \tilde{\yCoeff} = D^{-1} B D^{-1} \tilde{\xCoeff} \; .
  \end{displaymath}
  In view of Lemma~\ref{lem:orthoghell} one then obtains
  \begin{displaymath}
    \left\| L_N \right\|_{\cL(\Ldomain_N,\Lrange_N)} =
    \| \tilde{B} \|_2
    \qquad\mbox{ with }\qquad
    \tilde{B} = D^{-1} B D^{-1} \; ,
  \end{displaymath}
  where~$\| \cdot \|_2$ denotes the regular induced $2$-norm of a matrix.
  Moreover, one can verify that we also have the identity
  \begin{equation} \label{eqn:lninversenorm}
    \left\| L_N^{-1} \right\|_{\cL(\Lrange_N,\Ldomain_N)} =
    \left\| \tilde{B}^{-1} \right\|_{2}
  \end{equation}
  which completes the proof.
\end{proof}
\begin{remark}
  Since $ D $ is a diagonal matrix we can construct~$ \tilde{B} $
  directly via the formulas
  \begin{subequations}
    \label{eqn:defbtildeijkell}
    \begin{align}
        (\tilde{B}_{00})_{k\ell} &= \alpha_{k\ell} \; , \\
        (\tilde{B}_{i0})_{k\ell} &= -\frac{(b_{i\ell}, 
          \phi_{k})_{L^2(\Omega)}}{\kappa_k} \; , \\
        (\tilde{B}_{0j})_{k\ell} &= \frac{(a_{kj},
          \phi_{\ell})_{\suitableFunctionSpace}}{\kappa_\ell}
          \; , \\
        (\tilde{B}_{ij})_{k\ell} &= -\delta_{ij}\delta_{k\ell} \beta_{i} -
          \frac{1}{\kappa_k^2} \delta_{k\ell} \gamma_{ij} +
          \frac{1}{\kappa_\ell} \left(c_{ij}\phi_{\ell},
          \phi_{k}\right)_{L^2(\Omega)}
          \label{eqn:functional-b-tilde-k-ell} \; .
    \end{align}
  \end{subequations}
\end{remark}

\subsection{Construction of an approximate inverse}
\label{sec:approxinverse}
The crucial part in the derivation of our norm bound for the
inverse of~$L$ is the application of Proposition~\ref{prop:neumann}.
For this, we need to construct an approximate inverse of this
operator. Since this construction must be explicit, we will 
approach it in two steps. The first has already been accomplished
in the last section, where we considered a finite-dimensional
projection of~$L$, which can be inverted numerically. 
In this section, we complement this finite-dimensional part with
a consideration of the infinite-dimensional complementary space.
For this, we refer the reader again to the definition of the
matrix representation~$B$ in \eqref{eqn:defbkell} and \eqref{eqn:defbijkell}. 
Since the finite-dimensional approximation is constructed using
the projections~$P_N$ which make use of the low-wavenumber basis
functions, one would expect that as $N \to \infty$ this representation
leads to increasingly better approximations of the operator~$L$. 
Note in particular that every entry~$\left(B_{ij}\right)_{k\ell}$
in~\eqref{eqn:functional-b-k-ell} is the sum of three terms, where
the first one and the last one depend on the Laplacian eigenvalues
from~\eqref{def:kappak}. One can easily see that among these three
terms the first one dominates as $ \ell \to \infty $, and thus 
also as $ N \to \infty $. Based on this observation, we now describe how 
to use the
inverse of the first term on the infinite tail in order to complement
the inverse of~$L_N$. 

To describe this procedure in more detail, consider an arbitrary 
element $ \LrangeElement \in \Lrange $. We decompose this element
into its finite-dimensional part and infinite tail in the form
\begin{displaymath}
  \LrangeElement = \sum_{b \in \basisSymbol_{I}}
    \yCoeff_b b
  = \LrangeElement_N + \LrangeElement_{\infty} \in
  \Lrange_N \oplus \Lrange_\infty \; ,
\end{displaymath}
where we define
\begin{displaymath}
  \Lrange_N = P_N \Lrange
  \qquad\mbox{ and }\qquad
  \Lrange_\infty = \left( I - P_N \right) \Lrange \; .
\end{displaymath}
Using this representation we also have
\begin{displaymath}
  \LrangeElement_\infty \; = \;
  \sum_{j=1}^\nfunctions \sum_{k \in \indexsetSymbol_j
    \setminus (\indexsetSymbol_j)_N} \yCoeff_{jk} \Phi_{jk}
\end{displaymath}
which enables the following definition.
\begin{definition}[Approximate Inverse Operator]
  \label{def:ST}
  Let $ \LrangeElement_N, \LrangeElement_\infty $ be as above.
  We define the operator $ T : \Lrange_\infty \to \Ldomain_\infty $ as
  \begin{displaymath}
    T \LrangeElement_\infty \; = \;
    T \sum_{j=1}^\nfunctions \sum_{k \in \indexsetSymbol_j
      \setminus (\indexsetSymbol_j)_N} \yCoeff_{jk} \Phi_{jk} \; = \;
    -\sum_{j=1}^\nfunctions \sum_{k \in \indexsetSymbol_j \setminus
      (\indexsetSymbol_j)_N} \frac{\yCoeff_{jk}}{\beta_{j}
      \kappa_{k}^2} \Phi_{jk} \; ,
  \end{displaymath}
  and the operator $ S : \Lrange \to \Ldomain $ as
  \begin{equation}
    \label{def:approx-inv-S}
    S \LrangeElement \; = \;
    L_N^{-1} \LrangeElement_N + T \LrangeElement_\infty \; .
  \end{equation}
  One can readily see that the operator $T = S|_{\Lrange_\infty}$
  is one-to-one and onto, and the operator~$ S $ is the candidate
  approximate inverse of $ L \in \cL(\Ldomain, \Lrange) $.
\end{definition}
To close this section, we now derive a bound on the 
operator norm of~$S$, since this will be needed in the
application of Proposition~\ref{prop:neumann}.
\begin{lemma}[Computable~$\rho_2$] \label{lem:rho2}
  Consider the two operators $ S, T $ as defined in
  Definition~\ref{def:ST}, assume that $\beta_{j} > 0$
  for $j=1,\ldots,n$, and define the constant $C_T =
  (\min_{j=1,\ldots,n} \beta_{j})^{-1} > 0$ as in
  Theorem~\ref{thm:k}. Then we have the two inequalities
  \begin{displaymath}
    \| T \|_{\cL(\Lrange_\infty, \Ldomain_\infty)} \le C_T
    \qquad\mbox{ and }\qquad
    \|S\|_{\cL(\Lrange,\Ldomain)} \le \max(K_N,C_T) \; ,
  \end{displaymath}
  where~$K_N$ was introduced in~\eqref{eqn:defkn}. Moreover,
  this implies that we may take $\rho_2 = \max(K_N, C_T)$
  in Proposition \ref{prop:neumann}.
\end{lemma}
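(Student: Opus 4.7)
The plan is to bound $\|T\|$ first by a direct Fourier computation, then lift it to a bound on $\|S\|$ via an orthogonal decomposition argument, and finally read off $\rho_2$ for the Neumann series lemma.

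First, I would estimate $\|T\|_{\cL(\Lrange_\infty,\Ldomain_\infty)}$ by unpacking the Fourier cosine expansion. For $\LrangeElement_\infty = \sum_{j=1}^n \sum_{k \in \indexsetSymbol_j\setminus(\indexsetSymbol_j)_N} \yCoeff_{jk} \Phi_{jk}$, Lemma~\ref{lem:orthoghell} (applied to $\hmtwobar$, where $\{\kappa_k \phi_k\}$ is the orthonormal set, and to $\htwobar$, where $\{\kappa_k^{-1}\phi_k\}$ is the orthonormal set) gives
\begin{displaymath}
  \|\LrangeElement_\infty\|_{\Lrange}^2 \; = \;
  \sum_{j=1}^n \sum_{k} \kappa_k^{-2}\, \yCoeff_{jk}^2
  \quad\mbox{ and }\quad
  \|T\LrangeElement_\infty\|_{\Ldomain}^2 \; = \;
  \sum_{j=1}^n \sum_{k} \frac{\yCoeff_{jk}^2}{\beta_j^2\,\kappa_k^4}\,\kappa_k^2
  \; = \;
  \sum_{j=1}^n \frac{1}{\beta_j^2}\sum_{k}\kappa_k^{-2}\,\yCoeff_{jk}^2,
\end{displaymath}
where the sums over $k$ range over $\indexsetSymbol_j\setminus(\indexsetSymbol_j)_N$. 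Pulling the $1/\beta_j^2$ factors out and bounding each by $C_T^2$ immediately yields $\|T\LrangeElement_\infty\|_\Ldomain \le C_T\,\|\LrangeElement_\infty\|_\Lrange$, which is the first claimed inequality.

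Next, for the $S$ estimate, I would use that the projection $P_N$ is an orthogonal projection in both the $\Ldomain$-inner product and the $\Lrange$-inner product, since the basis functions $\phi_k$ (at any fixed component $i$) are pairwise orthogonal in $\htwobar$ and in $\hmtwobar$ by Lemma~\ref{lem:orthoghell}. Hence the decompositions $\Lrange = \Lrange_N \oplus \Lrange_\infty$ and $\Ldomain = \Ldomain_N \oplus \Ldomain_\infty$ are orthogonal, and the Pythagorean identity applies. Since $L_N^{-1}\LrangeElement_N \in \Ldomain_N$ and $T\LrangeElement_\infty \in \Ldomain_\infty$ are orthogonal, one obtains
\begin{displaymath}
  \|S\LrangeElement\|_\Ldomain^2
  \; = \;
  \|L_N^{-1}\LrangeElement_N\|_\Ldomain^2 + \|T\LrangeElement_\infty\|_\Ldomain^2
  \; \le \;
  K_N^2\,\|\LrangeElement_N\|_\Lrange^2 + C_T^2\,\|\LrangeElement_\infty\|_\Lrange^2
  \; \le \;
  \max(K_N,C_T)^2\,\|\LrangeElement\|_\Lrange^2,
\end{displaymath}
using~\eqref{eqn:defkn} in the first summand and the just-proved $T$ bound in the second. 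Taking square roots yields the second inequality. The final statement about $\rho_2$ is then immediate: since this bound on $\|S\|_{\cL(\Lrange,\Ldomain)}$ is exactly the hypothesis of Proposition~\ref{prop:neumann} with $\cS = S$, we may take $\rho_2 = \max(K_N,C_T)$.

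I do not anticipate any serious obstacle: the computation is essentially two Parseval-type identities followed by a Pythagorean sum. The only subtle point to verify carefully is that $P_N$ really is an orthogonal projection with respect to the $\htwobar$- and $\hmtwobar$-inner products simultaneously (so that the cross terms vanish); this reduces to the observation that the $\phi_k$ form orthogonal systems in every $\overline{\cH}^\ell$, with norms that differ only by the factor $\kappa_k^{\ell/2}$, which is what Lemma~\ref{lem:orthoghell} records.
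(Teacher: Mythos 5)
Your proposal is correct and follows essentially the same route as the paper's proof: the identical Parseval-type computation for the $T$ bound, followed by the Pythagorean splitting of $\|S\LrangeElement\|_\Ldomain^2$ across the orthogonal decomposition $\Ldomain_N \oplus \Ldomain_\infty$. Your explicit remark that the cross terms vanish because $P_N$ is orthogonal in every $\overline{\cH}^\ell$ simultaneously is a point the paper leaves implicit, but it is exactly the right justification.
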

\begin{proof}
  To begin with, we let $\LrangeElement_\infty \in \Lrange_\infty$
  be arbitrary and show that $\| T \LrangeElement_\infty \|_\Ldomain
  \le C_T \| \LrangeElement_\infty \|_\Lrange$.
  This follows readily from $\beta_{j} > 0$, Lemma~\ref{lem:orthoghell}, and~\ref{def:hbarellnorm},
  as well as the identities
  \begin{eqnarray*}
    \left\| T \LrangeElement_\infty \right\|_{\Ldomain}^2 & = &
      \left\| \sum_{j=1}^\nfunctions \sum_{k \in \indexsetSymbol_j
      \setminus (\indexsetSymbol_j)_N} \frac{\yCoeff_{jk}}{\beta_{j}
      \kappa_{k}^2} \Phi_{jk} \right\|_{\overline{\cH}^2}^2 \\[2ex]
    & = &
      \sum_{j=1}^\nfunctions \sum_{k \in \indexsetSymbol_j
      \setminus (\indexsetSymbol_j)_N}
      \frac{\yCoeff_{jk}^2\kappa_{k}^2}{(\beta_{j}\kappa_{k}^2)^2}
    \; \le \;
      C_T^2\sum_{j=1}^\nfunctions \sum_{k \in \indexsetSymbol_j
      \setminus (\indexsetSymbol_j)_N}
      \kappa_{k}^{-2} \yCoeff_{jk}^2 \\[2ex]
    & = &
      C_T^2\left\|
      \sum_{j=1}^\nfunctions \sum_{k \in \indexsetSymbol_j
      \setminus (\indexsetSymbol_j)_N}
      \yCoeff_{jk} \Phi_{jk} \right\|_{\overline{\cH}^{-2}}^2
    \; = \;
      C_T^2\left\| \LrangeElement_\infty \right\|_\Lrange^2
      \; .
  \end{eqnarray*}
  This estimate in turn implies for all $ \LrangeElement =
  \LrangeElement_N + \LrangeElement_\infty \in \Lrange_N \oplus
  \Lrange_\infty$ the inequality
  \begin{eqnarray*}
    \| S \LrangeElement \|_\Ldomain^2 & = &
      \| L_N^{-1} \LrangeElement_N \|_\Ldomain^2 +
      \| T \LrangeElement_\infty \|_\Ldomain^2 \\[2ex]
    & \le &
      \underbrace{\| L_N^{-1} \|_{\cL(\Lrange_N,\Ldomain_N)}^2}_{\le K_N^2}
      \| \LrangeElement_N \|_\Lrange^2 + C_T^2\| \LrangeElement_\infty \|_\Lrange^2
      \; \le \; \max(K_N,C_T)^2 \| \LrangeElement \|_\Lrange^2 \; ,
  \end{eqnarray*}
  where we used the definition of~$K_N$ from~(\ref{eqn:defkn}).
  This completes the proof of the lemma.
\end{proof}

In other words, the operator norm of the approximate inverse~$S$
can be bounded in terms of the inverse bound for the finite-dimensional
projection given in Lemma \ref{lem:kn}. Furthermore, it follows directly
from the definition of~$S$ that this operator is one-to-one, as long as~$L_N$
is --- and the latter can be established using interval arithmetic. We
conclude by remarking that, in many cases, the constant~$C_T$ can be taken
as~$1$ through proper scaling of the diffusion coefficients in the model
formulation.
\subsection{Assembling the final inverse estimate}
\label{sec:comprof}
In the last section we addressed two crucial aspects of
Proposition~\ref{prop:neumann}. On the one hand, we provided an
explicit construction for the approximate inverse~$S \in \cL(\Lrange,\Ldomain)$
of~$L$ defined in~(\ref{eqn:ldef}). On the
other hand, we derived an upper bound on the operator norm of~$S$,
which can be computed using the finite-dimensional projection~$L_N$
of~$L$. This in turn provides the constant~$\rho_2$ in
Proposition~\ref{prop:neumann}. In this final subsection, we focus
on the constant~$\rho_1$, i.e., we derive an upper bound on the
norm~$\|I - S L \|_{\cL(\Ldomain,\Ldomain)}$, and show how this bound can be made
smaller than one. Altogether, this will complete the proof of the
estimate for the constant~$K$ which bounds the operator norm
of~$ L^{-1} $. As a first step, we present in the following
lemma a decomposition of~$ L $ in terms of~$ L_N $ and~$ T $
that will help handle the infinite tail estimates.
\begin{lemma}
  \label{lem:LTN-decomp}
  Let $ L \in \cL(\Ldomain, \Lrange) $ be as in~\eqref{eqn:explicit-form-L},
  and let $ S \in \cL(\Lrange,\Ldomain) $,~$ T \in \cL(\Lrange_\infty,
  \Ldomain_\infty) $ be as in Definition~\ref{def:ST}.
  Further, let $ P_N $ be as defined in~\eqref{eqn:defpn}
  and~$ L_N \in \cL(\Ldomain_N, \Lrange_N) $ be as in~\eqref{eqn:defln}.
  Then, using the additive representation
  $ \LdomainElement 
    = \LdomainElement_N + \LdomainElement_\infty 
    = (\LdomainElementScalar, P_N \LdomainElementFun) 
      + (0, (I-P_N) \LdomainElementFun)
      \in \Ldomain_N \oplus \Ldomain_\infty$,
  we have the identity
  \begin{equation}\label{eqn:lx}
    L \LdomainElement = (L_N \LdomainElement_N + \cM \LdomainElement_\infty) +
        \left( T^{-1} \LdomainElement_\infty - \cN \LdomainElement \right) \; ,
  \end{equation}
  where we define $ \cM, \cN $ by
  \begin{align}
    \label{def:calM}
    \cM \LdomainElement_\infty \; &= \; \left( \left[ \sum_{j=1}^\nfunctions
      (a_{kj}, (I-P_N)\LdomainElementFun_j)_{\htwobar} \right]_{k=1}^\nscalars,
      \left[ -P_N \Delta \sum_{j=1}^\nfunctions c_{kj} (I-P_N) v_j
      \right]_{k=1}^\nfunctions \right) \; , \\[2ex]
    \label{def:calN}
    \cN \LdomainElement \; &= \; \left( 0, \left[ (I-P_N)
      \left(\sum_{i=1}^\nscalars \LdomainElementScalar_i b_{ki} +
      \Delta\sum_{j=1}^\nfunctions c_{kj} \LdomainElementFun_j +
      \sum_{j=1}^{\nfunctions} \gamma_{kj} \LdomainElementFun_j \right)
      \right]_{k=1}^\nfunctions \right) \; .
  \end{align}
\end{lemma}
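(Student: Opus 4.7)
The identity is purely algebraic, and my plan is to verify it by direct computation component-by-component, using the explicit form of~$L$ from~\eqref{eqn:explicit-form-L} together with the definitions of $L_N$, $T$, $\mathcal{M}$ and $\mathcal{N}$. The three facts I will use repeatedly are that $P_N$ commutes with both~$\Delta$ and~$\Delta^2$ (since all three are diagonalized by the basis $\{\phi_\ell\}$), that $\Delta^2 P_N v_k$ and $\sum_j \gamma_{kj} P_N v_j$ already lie in $P_N\cH^\ell$ (hence survive an extra application of~$P_N$), and that $w = P_N w + (I-P_N) w$ for each function component.

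After splitting $\mathbf{x} = \mathbf{x}_N + \mathbf{x}_\infty$ and expanding $L\mathbf{x} = L\mathbf{x}_N + L\mathbf{x}_\infty$ by linearity, I treat the scalar components first. Here $L_N\mathbf{x}_N$ contributes $\sum_i \alpha_{ki}\eta_i + \sum_j (a_{kj},P_N v_j)_{\htwobar}$ and $\mathcal{M}\mathbf{x}_\infty$ contributes $\sum_j (a_{kj},(I-P_N)v_j)_{\htwobar}$, while $T^{-1}\mathbf{x}_\infty$ and $\mathcal{N}\mathbf{x}$ have vanishing scalar part; the sum collapses by bilinearity of the inner product to $\sum_i \alpha_{ki}\eta_i + \sum_j (a_{kj},v_j)_{\htwobar}$, matching the scalar part of~\eqref{eqn:explicit-form-L}. (In fact the $\mathcal{M}$-contribution is already zero because $a_{kj}\in P_N U_j$ and $P_N$ is orthogonal in~$\htwobar$, but this refinement is not needed for the identity.)

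For the $n$ function components I would match the contributions to the four pieces $-\beta_k\Delta^2 v_k$, $-\sum_i b_{ki}\eta_i$, $-\Delta\sum_j c_{kj}v_j$, and $-\sum_j \gamma_{kj}v_j$ separately. The bi-Laplacian reassembles as $-\beta_k\Delta^2 P_N v_k$ coming from $L_N\mathbf{x}_N$ plus $-\beta_k\Delta^2(I-P_N)v_k$ coming from $T^{-1}\mathbf{x}_\infty$. The $b$ and $\gamma$ contributions recover once the projected parts from $L_N\mathbf{x}_N$ are combined with the $(I-P_N)$ tails supplied by $-\mathcal{N}\mathbf{x}$. The $c$-term is the most delicate, and is precisely what motivates the definition of~$\mathcal{M}$: $L_N\mathbf{x}_N$ contributes $-P_N\Delta\sum_j c_{kj}P_N v_j$, the operator $\mathcal{M}\mathbf{x}_\infty$ supplies the missing $-P_N\Delta\sum_j c_{kj}(I-P_N)v_j$, and $-\mathcal{N}\mathbf{x}$ supplies $-(I-P_N)\Delta\sum_j c_{kj}v_j$; using $P_N + (I-P_N) = I$ together with $P_N\Delta = \Delta P_N$, these three pieces collapse to $-\Delta\sum_j c_{kj}v_j$.

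I do not anticipate any essential obstacle. The lemma is a bookkeeping identity whose role is to isolate the dominant bi-Laplacian part of $L\mathbf{x}_\infty$, which will be handled by~$T^{-1}$ in the subsequent Neumann-series argument of Proposition~\ref{prop:neumann}, from the lower-order coupling and from the truncation remainders. The definitions of $\mathcal{M}$ and $\mathcal{N}$ are reverse-engineered precisely so that~\eqref{eqn:lx} holds on the nose, and the only care required during the verification is to keep track of which operators are applied before versus after the projection~$P_N$.
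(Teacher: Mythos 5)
Your proposal is correct and follows essentially the same route as the paper's proof: a direct componentwise verification using the explicit form of $L$, the commutativity of $P_N$ with $\Delta$ and $\Delta^2$, and the four-way $P_N$/$(I-P_N)$ splitting of the $c_{kj}$-term, with the pieces then matched to $L_N\mathbf{x}_N$, $\mathcal{M}\mathbf{x}_\infty$, $T^{-1}\mathbf{x}_\infty$, and $-\mathcal{N}\mathbf{x}$ exactly as the paper does. Your parenthetical observation that the scalar part of $\mathcal{M}\mathbf{x}_\infty$ vanishes when $a_{kj}\in P_N U_j$ is also consistent with the remark the paper makes just before its Lemma on the computable $\rho_1$.
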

\begin{proof}
  Notice that $ L_N \LdomainElement_N $ and $ \cM \LdomainElement_\infty $
  are in the finite-dimensional space~$\Lrange_N$, while
  $ T^{-1}\LdomainElement_\infty $ and $ \cN \LdomainElement $ are in~$\Lrange_\infty$.
  With this in mind, we detail the derivation of \eqref{eqn:lx} as follows.
  We first note that $ \Delta $ and $ P_N $ commute.
  Then the explicit form of $ L_N \LdomainElement_N = P_N L \LdomainElement_N $ is:
  \begin{displaymath}
    L_N \LdomainElement_N = \begin{pmatrix}
            \left[ \sum_{i=1}^\nscalars \alpha_{ki}\LdomainElementScalar_i +
            \sum_{j=1}^\nfunctions (a_{kj}, P_N
            \LdomainElementFun_j)_{\suitableFunctionSpace} \right]_{k=1}^\nscalars \\[2ex]
            \left[ - \beta_{k}\Delta^2P_N \LdomainElementFun_k 
            - \sum_{i=1}^\nscalars \LdomainElementScalar_i P_N b_{ki} 
            - P_N \Delta\sum_{j=1}^\nfunctions c_{kj} P_N \LdomainElementFun_j 
            - \sum_{j=1}^{\nfunctions} \gamma_{kj} P_N \LdomainElementFun_j
            \right]_{k=1}^\nfunctions
          \end{pmatrix} \; .
  \end{displaymath}
  Next, we consider the difference $ L \LdomainElement - L_N \LdomainElement_N $.
  The first $ \nscalars $ components are given by the scalars
  \begin{displaymath}
    \sum_{j=1}^\nfunctions (a_{kj}, (I-P_N)\LdomainElementFun_j)_{\htwobar}
    \qquad\mbox{ for }\qquad
    k = 1,\ldots,m \; ,
  \end{displaymath}
  and we compute the next $ \nfunctions $ components term by term for clarity.
  The terms involving $ b_{ki} $ and $ \gamma_{kj} $ are straightforward since $ P_N $ is linear.
  The term involving $ \Delta^2 $ is also straightforward since $ \Delta^\ell $
  commutes with $ P_N $. This leaves the term involving $ c_{kj} $, which we can
  decompose using $ P_N $ as follows:
  \begin{align*}
    -\Delta \sum_{j=1}^\nfunctions c_{kj} v_j 
    =& 
    -P_N \Delta \sum_{j=1}^\nfunctions c_{kj} P_N v_j
    -P_N \Delta \sum_{j=1}^\nfunctions c_{kj} (I-P_N) v_j
    \\& -(I-P_N) \Delta \sum_{j=1}^\nfunctions c_{kj} P_N v_j
    -(I-P_N) \Delta \sum_{j=1}^\nfunctions c_{kj} (I-P_N) v_j
    \; .
  \end{align*}
  This immediately implies that 
  \begin{displaymath}
    -\Delta \sum_{j=1}^\nfunctions c_{kj} v_j 
    +P_N \Delta \sum_{j=1}^\nfunctions c_{kj} P_N v_j
    =
    -P_N \Delta \sum_{j=1}^\nfunctions c_{kj} (I-P_N) v_j
    -(I-P_N) \Delta \sum_{j=1}^\nfunctions c_{kj} v_j \; ,
  \end{displaymath}
  and therefore that last $ \nfunctions $ components of the difference $ L \LdomainElement - L_N \LdomainElement_N $
  are given explicitly by
  \begin{eqnarray*}
    & & \underbrace{-P_N \Delta \sum_{j=1}^\nfunctions c_{kj}
      (I-P_N)\LdomainElementFun_j}_{(\cM \LdomainElement_\infty)_k}
      \overbrace{-\beta_k \Delta^2(I-P_N)\LdomainElementFun_k}^{(T^{-1}
      \LdomainElement_\infty)_k} \\[1ex]
    & & \qquad\quad
      \underbrace{-\sum_{i=1}^\nscalars \LdomainElementScalar_i (I-P_N) b_{ki}
    -(I-P_N)\Delta \sum_{j=1}^\nfunctions c_{kj} \LdomainElementFun_j
    -\sum_{j=1}^\nfunctions \gamma_{kj} (I-P_N) \LdomainElementFun_j}
    _{-(\cN \LdomainElement)_k}
  \end{eqnarray*}
  for $ k = 1,\ldots,\nfunctions $.
  Thus we have shown $ L \LdomainElement - L_N \LdomainElement_N =
  \cM \LdomainElement_\infty + T^{-1} \LdomainElement_\infty - \cN \LdomainElement $
  and completed the proof.
\end{proof}
We now use the above representation \eqref{eqn:lx} of the operator~$L$ which
is split along the subspaces~$\Lrange_N$ and~$\Lrange_\infty$ to derive an
expression for the infinite tail~$I - SL \in \cL(\Ldomain,\Ldomain)$.
More precisely, we have
\begin{equation}\label{eqn:imsl}
  (I - SL) \LdomainElement = T \cN \LdomainElement - L_N^{-1}
  \cM \LdomainElement_\infty \; ,
\end{equation}
and this will be verified in detail below. Notice that in this
representation, the first term lies in the
the complement~$\Ldomain_\infty$, while the second term is contained
in the finite-dimensional space~$\Ldomain_N$.
The identity in~(\ref{eqn:imsl}) follows from the definition
of $ S $ in Definition \ref{def:ST} and
\begin{eqnarray*}
  SL \LdomainElement & = & L_N^{-1} L_N \LdomainElement_N
    + L_N^{-1} \cM \LdomainElement_\infty
    + T \left( T^{-1} \LdomainElement_\infty 
    - \cN \LdomainElement \right) \\[1ex]
  & = & \LdomainElement_N
    + \LdomainElement_\infty 
    + L_N^{-1} \cM \LdomainElement_\infty
    - T \cN \LdomainElement \\[1ex]
  & = & \LdomainElement
    + L_N^{-1} \cM \LdomainElement_\infty
    - T \cN \LdomainElement \; .
\end{eqnarray*}
After these preparations, we can now show that the operator norm
of~$I - SL$ can be expected to be small for sufficiently large $N$.
This will provide an estimate for the constant~$\rho_1$ in
Proposition~\ref{prop:neumann}, and conclude the proof of
Theorem~\ref{thm:k}. However, we pause here to remind the
reader that, in principle, the Riesz representative~$ a_{kj} $ could
be a general element of~$ \htwobar $. As mentioned in the introduction,
we restrict ourselves to the case where~$ a_{kj} $ is an element of
the finite-dimensional space~$ P_N U_j \subset U_j $, which implies
that the first~$ m $ components of $ \cM \LdomainElement_\infty $
are in fact identically 0.
\begin{lemma}[Computable $ \rho_1 $]
  \label{lem:rho1}
  Let $ T $ be as in Lemma~\ref{def:ST}, $ \cN $ be as in~\eqref{def:calN},
  and $ b_{ki}, c_{kj}, \gamma_{kj} $ be as in Theorem~\ref{thm:k}.
  Suppose further that, just as in Theorem~\ref{thm:k}, the Riesz
  representative~$ a_{kj} $ of~$ \ell_{kj} $
  lies in~$ P_N U_j $.
  Define $ C_T = (\min_{j=1,\ldots,n} \beta_{j})^{-1} > 0$
  as in Theorem~\ref{thm:k}, $ K_N $ as in Lemma~\ref{lem:kn},
  and finally~$A$ and~$B$ by
  \begin{align*}
    A \; &:= \; \frac{K_N \sqrt{n}}{\pi^2 N^2} \left( \sum_{k=1}^\nfunctions 
    \max_{1 \le j \le \nfunctions}
        \|c_{kj}\|_{\infty}^2
    \right)^{1/2} \; ,
    \\
    B \; &:= \; \frac{C_T \sqrt{2 \max\{m,n\}}}{\pi^2 N^2} \left( \sum_{k=1}^\nfunctions 
    \max_{\substack{1 \le i \le \nscalars \\ 1 \le j \le \nfunctions}} 
        \left\{\|b_{ki}\|_{\hzerobar}, \left( C_b C_e \|c_{kj}\|_{\htwo} +
        \frac{|\gamma_{kj}|}{\pi^2} \right) \right\}^2
    \right)^{1/2}
  .\end{align*}
  Then,
  $ \left\| L_N^{-1}\cM \LdomainElement_\infty \right\|_\Ldomain
  \le A \|\LdomainElement_\infty\|_\Ldomain$,
  $ \left\| T \cN \LdomainElement_\infty \right\|_\Ldomain
  \le B \|\LdomainElement\|_\Ldomain $,
  and $ \| I  - S L \|_{\cL(\Ldomain,\Ldomain)} \le \sqrt{A^2+B^2} $.
  Furthermore, as long as there exists a~$ \tau $ such that
  $ \sqrt{A^2 + B^2} \le \tau < 1 $, we can take $ \rho_1 = \tau $
  in Proposition \ref{prop:neumann}.
\end{lemma}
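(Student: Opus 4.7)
The plan is to exploit the splitting
$(I-SL)\LdomainElement = T\cN\LdomainElement - L_N^{-1}\cM\LdomainElement_\infty$
already derived in~\eqref{eqn:imsl}. Since the two summands lie in the orthogonal Hilbert space complements $\Ldomain_\infty$ and $\Ldomain_N$ respectively, we have
$\|(I-SL)\LdomainElement\|_\Ldomain^2 = \|T\cN\LdomainElement\|_\Ldomain^2 + \|L_N^{-1}\cM\LdomainElement_\infty\|_\Ldomain^2$,
and it suffices to establish the two component bounds $\|L_N^{-1}\cM\LdomainElement_\infty\|_\Ldomain \le A\|\LdomainElement_\infty\|_\Ldomain$ and $\|T\cN\LdomainElement\|_\Ldomain \le B\|\LdomainElement\|_\Ldomain$. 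The final operator-norm bound $\sqrt{A^2+B^2}$ then follows using $\|\LdomainElement_\infty\|_\Ldomain \le \|\LdomainElement\|_\Ldomain$, and $\rho_1=\tau$ is admissible in Proposition~\ref{prop:neumann} under the hypothesis $\sqrt{A^2+B^2}\le\tau<1$.

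For $L_N^{-1}\cM\LdomainElement_\infty$, first note that the scalar part of $\cM\LdomainElement_\infty$ vanishes because $a_{kj}\in P_N U_j$ is $\htwobar$-orthogonal to $(I-P_N)v_j$. For the $k$-th functional component, a direct spectral computation using that $P_N$ and $\Delta$ share eigenfunctions yields $\|P_N\Delta w\|_{\hmtwobar}\le\|w\|_{L^2}$ for arbitrary $w\in L^2(\Omega)$, so
$\|(\cM\LdomainElement_\infty)_k\|_\hmtwobar \le \sum_j \|c_{kj}\|_\infty\,\|(I-P_N)v_j\|_{L^2}$.
The sharper form of Lemma~\ref{lem:projtailest} at $(\ell,m)=(0,2)$ then supplies the factor $1/(\pi^2 N^2)$ against $\|(I-P_N)v_j\|_\htwobar$, and Cauchy--Schwarz in $j$ together with $\|\LdomainElement_\infty\|_\Ldomain^2=\sum_j\|(I-P_N)v_j\|_\htwobar^2$ and summation of squares in $k$ bring in $\sqrt{n}\,\big(\sum_k\max_j\|c_{kj}\|_\infty^2\big)^{1/2}$. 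Since $\cM\LdomainElement_\infty\in\Lrange_N$, one finally multiplies by $K_N$ from~\eqref{eqn:defkn} to obtain exactly $A$.

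For $T\cN\LdomainElement$, Lemma~\ref{lem:rho2} reduces the task to bounding $\|\cN\LdomainElement\|_\Lrange$ by $B/C_T$ times $\|\LdomainElement\|_\Ldomain$. The scalar part of $\cN\LdomainElement$ is zero by construction, and each of the $n$ functional components is a sum of three $(I-P_N)$-tails which I will handle separately. The $b_{ki}$-contribution is immediate from Lemma~\ref{lem:projtailest} at $(\ell,m)=(-2,0)$, producing the coefficient $\|b_{ki}\|_\hzerobar/(\pi^2 N^2)$. The $\gamma_{kj}v_j$-piece uses first the scale embedding of Lemma~\ref{lem:sobolevscale}(b) to pass from $\hzerobar$ to $\hmtwobar$ (factor $1/\pi^2$) and then the projection tail at $(0,2)$, yielding $|\gamma_{kj}|/(\pi^4 N^2)$. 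The $c_{kj}$-term is the technical nuisance and the main obstacle: commuting $\Delta$ with $I-P_N$, the Laplacian isometry of Lemma~\ref{lem:sobolevscale}(a) converts the $\hmtwobar$-norm into an $\hzerobar$-norm; since $N\ge 1$ one has $I-P_N=(I-P_N)\overline{P}$, which shifts $c_{kj}v_j$ into mean-zero functions so that Lemma~\ref{lem:projtailest} becomes applicable; the Banach algebra estimate (Lemma~\ref{lem:sobolev}(b)) and the explicit norm equivalence (Lemma~\ref{lem:sobolev}(c)) then convert the $\htwo$-product bound into an $\htwobar$-bound and produce the factor $C_b C_e\|c_{kj}\|_\htwo$. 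Dominating each coefficient by the componentwise maximum $M_k$ in the definition of $B$, applying the Cauchy--Schwarz chain $\sum_i|\eta_i| + \sum_j\|v_j\|_\htwobar \le \sqrt{2\max(m,n)}\,\|\LdomainElement\|_\Ldomain$, and summing squares over $k$ finishes the $B$-estimate.

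The only real obstacle is the $c_{kj}$-term, where the Banach algebra only operates in $\htwo$ while the Laplacian isometry lives on the $\overline{\cH}^\ell$-scale; the detour through $\overline{P}$ and the norm equivalence is what reconciles the two. Everything else is direct bookkeeping with the estimates already assembled in Section~\ref{sec:sob} and Lemma~\ref{lem:projtailest}.
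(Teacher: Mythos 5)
Your proposal is correct and follows essentially the same route as the paper's proof: the orthogonal splitting of $I-SL$ via~\eqref{eqn:imsl}, the isometry/projection-tail/Cauchy--Schwarz chain for the $\cM$-bound, the three-term treatment of $\cN$ with the Banach algebra and norm equivalence constants handling the $c_{kj}$-term, and the $\sqrt{2\max(m,n)}$ bookkeeping at the end. Your handling of the $\gamma_{kj}$-term (scale embedding first, then the tail estimate at $(0,2)$) differs cosmetically from the paper's (tail estimate at $(-2,2)$ followed by $N^{-4}\le N^{-2}$) but lands on the identical coefficient, and your explicit insertion of $\overline{P}$ to justify applying the tail estimate to $c_{kj}v_j$ is, if anything, slightly more careful than the paper's wording.
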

\begin{proof}
  For brevity in the verification, we define
  \begin{align*}
    M_k \; &:= \; \max_{1 \le j \le \nfunctions} \|c_{kj}\|_{\infty}^2 \; , \\
    N_k \; &:= \; 
      \max_{\substack{1 \le i \le \nscalars \\ 1 \le j \le \nfunctions}} 
      \left\{\|b_{ki}\|_{\hzerobar}, \left( C_b C_e \|c_{kj}\|_{\htwo} +
      \frac{|\gamma_{kj}|}{\pi^2} \right) \right\}^2 \; ,
  \end{align*}
  and verify the estimates as follows.
  First, since $\|L_N^{-1}\|_{\cL(\Lrange_N, \Ldomain_N)} \le K_N$, we must find a bound of the form
  \begin{displaymath}
    \|\cM \LdomainElement_\infty\|_{\Lrange_N} \le C_\cM \|\LdomainElement_\infty\|_{\Ldomain}
    \; .
  \end{displaymath}
  By the definition of $\cM$, and since $ a_{kj} \in P_N U_j $, we have
  \begin{displaymath}
    \|\cM\LdomainElement_\infty\|_{\Lrange_N}^2 = \sum_{k=1}^\nfunctions \left\| -P_N\Delta\sum_{j=1}^\nfunctions c_{kj} (I-P_N) \LdomainElementFun_j \right\|_{\hmtwobar}^2
  .\end{displaymath}
  Since $ P_N $ is an orthogonal projection and $ \Delta $ is an isometry,
  see Lemma~\ref{lem:sobolevscale}(a), we have the upper bound
  \begin{displaymath}
    \|\cM\LdomainElement_\infty\|_{\Lrange_N}^2 \le 
    \sum_{k=1}^\nfunctions \left\| \sum_{j=1}^\nfunctions c_{kj}
    (I-P_N) \LdomainElementFun_j \right\|_{\hzerobar}^2 \le
    \sum_{k=1}^\nfunctions \left( \sum_{j=1}^\nfunctions \left\| c_{kj}
    (I-P_N) \LdomainElementFun_j \right\|_{\hzerobar} \right)^2
  .\end{displaymath}
  Lemma~\ref{lem:projtailest} then yields
  \begin{displaymath}
    \|\cM\LdomainElement_\infty\|_{\Lrange_N}^2 \le
    \sum_{k=1}^\nfunctions 
      \left( \sum_{j=1}^\nfunctions \frac{1}{\pi^2 N^2}
      \left\|c_{kj}\right\|_{\infty}\left\|(I-P_N)\LdomainElementFun_j
      \right\|_{\htwobar}\right)^2 \; ,
  \end{displaymath}
  and factoring out the maximum coefficients gives
  \begin{displaymath}
    \|\cM\LdomainElement_\infty\|_{\Lrange_N}^2 \le
    \left(
      \frac{1}
      {\pi^4 N^4}
      \sum_{k=1}^\nfunctions 
      \max_{1 \le j \le \nfunctions} \left\|c_{kj}\right\|_{\infty}^2
    \right)
    \left(
      \sum_{j=1}^\nfunctions \left\|(I-P_N)\LdomainElementFun_j\right\|_{\htwobar}
    \right)^2
  .\end{displaymath}
  Finally, the Cauchy-Schwarz inequality yields
  \begin{displaymath}
    \|\cM\LdomainElement_\infty\|_{\Lrange_N}^2 \le
    \left(
      \frac{n}
      {\pi^4 N^4}
      \sum_{k=1}^\nfunctions 
      \max_{1 \le j \le \nfunctions} \left\|c_{kj}\right\|_{\infty}^2
    \right)
    \sum_{j=1}^\nfunctions \left\|(I-P_N)\LdomainElementFun_j\right\|_{\htwobar}^2
    \; ,
  \end{displaymath}
  which is precisely
  \begin{displaymath}
    \|\cM\LdomainElement_\infty\|_{\Lrange_N}^2 \le
    \frac{n}
      {\pi^4 N^4}
      \left(\sum_{k=1}^\nfunctions M_k\right)
      \left\|\LdomainElement_\infty\right\|_{\Ldomain}^2 = C_\cM^2 \left\|\LdomainElement_\infty\right\|_{\Ldomain}^2
  .\end{displaymath}
  Therefore, we can take $ A := K_N C_\cM $.
  Second, since $\| T \|_{\cL(\Lrange_\infty,\Ldomain_\infty)} \le C_T$, we must find a bound of the form
  \begin{displaymath}
    \|\cN \LdomainElement\|_\Lrange \le C_\cN \|\LdomainElement\|_\Ldomain.
  \end{displaymath}
  By the definition of $ \cN $, we have
  \begin{displaymath}
    \left\| \cN \LdomainElement \right\|_\Lrange^2 
      \; = \; 
      \sum_{k=1}^\nfunctions 
        \left\| 
          (I-P_N) \left( \sum_{i=1}^\nscalars \LdomainElementScalar_i b_{ki} 
        + \Delta \sum_{j=1}^\nfunctions c_{kj} \LdomainElementFun_j 
        + \sum_{j=1}^{\nfunctions} \gamma_{kj}  \LdomainElementFun_j 
        \right)
        \right\|_{\hmtwobar}^2
  .\end{displaymath}
  We can then split the $\hmtwobar$-norm term with the triangle inequality and use
  Lemma~\ref{lem:sobolev}(c), Lemma~\ref{lem:sobolevscale}(a), and
  Lemma~\ref{lem:projtailest} to obtain the upper bound
  \begin{displaymath}
    \left\| \cN \LdomainElement \right\|_\Lrange^2 \le 
    \sum_{k=1}^\nfunctions 
    \left( 
      \sum_{i=1}^\nscalars \frac{|\LdomainElementScalar_i| \left\| b_{ki} \right\|_{\hzerobar}}{\pi^2 N^2}
      + \sum_{j=1}^\nfunctions \frac{\| c_{kj} \LdomainElementFun_j \|_{\htwo}}{\pi^2 N^2}
      + \sum_{j=1}^\nfunctions \frac{|\gamma_{kj}| \left\| \LdomainElementFun_j \right\|_{\htwobar}}{\pi^4 N^4}
    \right)^2
  .\end{displaymath}
  Now, for the middle term involving $c_{kj}$, we use Lemma~\ref{lem:sobolev}(b),(c)
  to obtain the upper bound
  \begin{displaymath}
    \left\| \cN \LdomainElement \right\|_\Lrange^2 \le 
    \sum_{k=1}^\nfunctions 
    \left( 
      \sum_{i=1}^\nscalars \frac{|\LdomainElementScalar_i| \left\| b_{ki} \right\|_{\hzerobar}}{\pi^2 N^2} 
      + \sum_{j=1}^\nfunctions \left[ 
          \frac{C_b C_e \| c_{kj} \|_{\htwo} \| \LdomainElementFun_j \|_{\htwobar}}{\pi^2 N^2}
        + \frac{|\gamma_{kj}| \left\| \LdomainElementFun_j \right\|_{\htwobar}}{\pi^4 N^4}
        \right]
    \right)^2
  .\end{displaymath}
  Noting that $N^{-4} \le N^{-2}$ and applying the Cauchy-Schwarz inequality
  we find the upper bound
  \begin{displaymath}
    \left\| \cN \LdomainElement \right\|_\Lrange^2 \le 
    \frac{2}{\pi^4 N^4}
    \sum_{k=1}^\nfunctions 
    \left(
      \left[ \sum_{i=1}^\nscalars |\LdomainElementScalar_i| \left\| b_{ki} 
      \right\|_{\hzerobar} \right]^2
      + \left[ \sum_{j=1}^\nfunctions
          \left(C_b C_e \| c_{kj} \|_{\htwo} + \frac{|\gamma_{kj}|}{\pi^{2}} \right)
          \left\| \LdomainElementFun_j \right\|_{\htwobar}
        \right]^2
    \right)
  .\end{displaymath}
  Since we are aiming for a bound in terms of $ \| \LdomainElement \|_\Ldomain $,
  we factor out the maximum coefficients of $|\LdomainElementScalar_i|$ and 
  $\| \LdomainElementFun_j \|_{\htwobar}$, respectively, and can replace the
  right-hand side in the above bound by
  \begin{displaymath}
    \frac{2}{\pi^4 N^4}
    \sum_{k=1}^\nfunctions 
    \left(
      \left[ \max_{1 \le i \le \nscalars} \left\| b_{ki} \right\|_{\hzerobar} 
      \sum_{i=1}^\nscalars |\LdomainElementScalar_i| \right]^2
      + \left[ \max_{1 \le j \le \nfunctions} \left(C_b C_e \| c_{kj} \|_{\htwo} 
      + \frac{|\gamma_{kj}|}{\pi^2}\right) \sum_{j=1}^\nfunctions
          \left\| \LdomainElementFun_j \right\|_{\htwobar}
        \right]^2
    \right)
  .\end{displaymath}
  Note that the innermost sums are now independent of $ k $, and we can repeat the previous step
  and apply the Cauchy-Schwarz inequality again to obtain the new bound
  \begin{displaymath}
    \left\| \cN \LdomainElement \right\|_\Lrange^2 \le
    \frac{2}{\pi^4 N^4}
    \left( \sum_{k=1}^\nfunctions N_k \right)
    \left[ 
      m \sum_{i=1}^\nscalars |\LdomainElementScalar_i|^2
      + n \sum_{j=1}^\nfunctions
          \left\| \LdomainElementFun_j \right\|_{\htwobar}^2
    \right] \; ,
  \end{displaymath}
  from which $\|\cN \LdomainElement\|_\Lrange \le C_\cN \|\LdomainElement\|_\Ldomain$
  follows easily with
  \begin{displaymath}
    C_\cN \; := \; \frac{\sqrt{2 \max\{m,n\}}}{\pi^2 N^2}
    \left( \sum_{k=1}^\nfunctions N_k \right)^{1/2} \; .
  \end{displaymath}
  This concludes the proof of the second bound with $ B := C_T C_\cN $.
  The bound for $ \| I-SL \| $ is a direct result of the bound derived here and Equation~\eqref{eqn:imsl}.
\end{proof}
We also know from Lemma \ref{lem:rho2} that $\| S \|_{\cL(\Lrange, \Ldomain)} \le \max(K_N,C_T)$.
Therefore, we can directly apply Proposition~\ref{prop:neumann} with
the constants $\rho_1 = \sqrt{A^2 + B^2} \le \tau < 1$ and
$\rho_2 = \max(K_N,C_T)$, and this immediately implies that the
operator~$L$ is one-to-one, onto, and the norm of its
inverse operator is bounded via
\begin{displaymath}
  \left\| L^{-1} \right\|_{\cL(\Lrange,\Ldomain)} \; \le \;
  \frac{\rho_2}{1-\rho_1} \; = \;
  \frac{\max(K_N,C_T)}{1-\tau} \; .
\end{displaymath}
This completes the proof of Theorem~\ref{thm:k}.
\let\subspaceSymbol\undefined%
\let\subspaceIndex\undefined%
\let\indexIndex\undefined%
\let\nscalars\undefined%
\let\nfunctions\undefined%
\let\basisSymbol\undefined%
\let\htwobar\undefined%
\let\hmtwobar\undefined%
\let\hzerobar\undefined%
\let\indexsetSymbol\undefined%
\let\embeddingConstantToHtwobar\undefined%
\let\xCoeff\undefined%
\let\yCoeff\undefined%
\let\Ldomain\undefined%
\let\LdomainElement\undefined%
\let\Lrange\undefined%
\let\LrangeElement\undefined%
\let\suitableFunctionSpace\undefined%
\section{Conclusions and future applications}
\label{sec:future}
In this paper, we have developed a framework for establishing a 
rigorous bound for the operator norm of the inverse of a general
type of linear fourth-order elliptic operator that occurs in a large
class of systems, such as for example in the context of materials 
science applications. We have then applied this framework to the
triblock copolymer model, a three monomer version of the Ohta-Kawasaki
equation. In particular, we have validated a series of equilibrium
solutions in spatial dimensions one and two.

The strength of the constructions developed here are their flexibility.
For example, with only minor modifications to the parameters, we 
have been able to use the same construction to validate pitchfork 
bifurcation points for the diblock copolymer 
equation~\cite{rizzi:sander:wanner:p22a}.  Additionally, with
relatively little additional effort, we will be able to use the
same method for rigorous pseudo-arclength continuation methods
for phase field materials models such as the Cahn-Hilliard, the
Cahn-Morral, or the classical Ohta-Kawasaki systems. While we
anticipate that there are still issues that will need to be
addressed, such as incorporating preconditioning and sparseness into our
construction, the generality of our approach means that as we
address such considerations, one will not have to revisit them
again for each separate system and dynamical question. 

\section*{Acknowledgments}
%
%
The research of E.S.\ and T.W.\ was partially supported by the
Simons Foundation under Awards~636383 and~581334, respectively.
We are thankful for the careful and useful comments of the anonymous referees.
%
%

\addcontentsline{toc}{section}{References}
\footnotesize
%
%
\bibliography{wanner1a,wanner1b,wanner2a,wanner2b,wanner2c}
\bibliographystyle{abbrv}
\end{document}